\documentclass[12pt]{article}
\pagestyle{plain} \baselineskip 0.22in \textwidth 16.80cm
\textheight 22.0cm \topmargin -0.50cm \oddsidemargin -0.3cm
\evensidemargin -0.3cm
\parskip 0.2cm
\baselineskip 0.6cm \makeatletter

\usepackage{appendix}
\usepackage{amsthm,bbm,amssymb,amsmath,bm,mathrsfs}
\usepackage{graphicx}
\usepackage{subfigure}
\usepackage{float}
\usepackage{epsfig}
\usepackage[colorlinks=true, citecolor=blue]{hyperref}
\usepackage{abstract}
\usepackage{natbib}
\usepackage{array}
\usepackage{booktabs}
\setlength{\parindent}{2em}
\usepackage[english]{babel}
\usepackage[utf8]{inputenc}
\usepackage{algorithm,algpseudocode}  %for pseudo code writing
%\usepackage[noend]{algpseudocode}
%\usepackage[linesnumbered,ruled,vlined]{algorithm2e}
%\SetKwRepeat{Loop}{do}{}

\allowdisplaybreaks[4]
\newtheorem{thm}{\textbf{Theorem}}

\newtheorem{definition}{\textbf{Definition}}
\newtheorem{lem}{\textbf{Lemma}}

\newtheorem{remark}{\textbf{Remark}}
\newtheorem{example}{\textbf{Example}}

\DeclareMathOperator*{\argmax}{argmax}

\begin{document}

\title{\vspace{-2.6cm}
Mean-Variance Optimization and Algorithm for Finite-Horizon Markov
Decision Processes}
\author{{Li Xia$^{1}$}, \ {Zhihui Yu$^1$} \\
\small \vspace{-0.4cm}
    {$^1$School of Business, Sun Yat-Sen University, Guangzhou, China}\\
%\small \vspace{-0.4cm}
    %{$^2$Guangdong Province Key Laboratory of Computational Science, Guangzhou, China}
%    \\
%    \small \vspace{-0.4cm}
%    {$^3$Department of Management Science and Engineering, Stanford University, CA 94305, USA}
    }
\date{}
\maketitle
\begin{abstract}
Multi-period mean-variance optimization is a long-standing problem,
caused by the failure of dynamic programming principle. This paper
studies the mean-variance optimization in a setting of
finite-horizon discrete-time Markov decision processes (MDPs), where
the objective is to maximize the combined metrics of mean and
variance of the accumulated rewards at terminal stage. By
introducing the concepts of pseudo mean and pseudo variance, we
convert the original mean-variance MDP to a bilevel MDP, where the
outer is a single parameter optimization of the pseudo mean and the
inner is a standard finite-horizon MDP with an augmented state space
by adding an auxiliary state of accumulated rewards. We further
study the properties of this bilevel MDP, including the optimality
of history-dependent deterministic policies and the piecewise
quadratic concavity of the inner MDPs' optimal values with respect
to the pseudo mean. To efficiently solve this bilevel MDP, we
propose an iterative algorithm that alternatingly updates the inner
optimal policy and the outer pseudo mean. We prove that this
algorithm converges to a local optimum. We also derive a sufficient
condition under which our algorithm converges to the global optimum.
Furthermore, we apply this approach to study the mean-variance
optimization of multi-period portfolio selection problem, which
shows that our approach exactly coincides with the classical result
by \cite{li2000optimal} in financial engineering. Our approach
builds a new avenue to solve mean-variance optimization problems and
has wide applicability to any problem modeled by MDPs, which is
further demonstrated by examples of mean-variance optimization for
queueing control and inventory management.
\end{abstract}

\textbf{Keywords:} Markov decision process; mean-variance
optimization; bilevel MDP; iterative algorithm; portfolio selection

\section{Introduction}\label{sec:intro}
Mean-variance optimization is a classical problem in finance, which
was proposed by Nobel laureate \cite{markowitz1952} to control the
return and risk of portfolio, originally in a static optimization
regime. Since variance is a widely adopted metric to measure the
deviation of random variables, mean-variance optimization is also
studied in other fields, such as the safety control in renewable
power systems \citep{Li2014}, fairness control in queueing systems
\citep{Avi-Itzhak2004}, and risk management in inventory and supply
chain management \citep{Chio2016}. It is natural to extensively
study the mean-variance optimization in a stochastic dynamic regime.
However, this problem is challenging since the dynamic programming
principle fails and the time consistency does not hold, which is
caused by the non-separable (we would rather call it non-additive
and non-Markovian) property of variance function in dynamic
programming \citep{Ruszczynski10,Shapiro09,sobel1994mean}. The
mean-variance optimization of stochastic dynamic systems is a
long-standing open problem continually attracting research attention
in the literature
\citep{bauerle2024,chung1994mean,dai2021dynamic,sobel1982variance}.

One of the main research streams of multi-period mean-variance
optimization focuses on the portfolio selection in financial
engineering, from the perspective of stochastic control. The seminal
work by \cite{li2000optimal} proposed an embedding method to compute
the optimal policy with analytical forms. Then \cite{Zhou00}
extended this work to a continuous-time linear quadratic model and
derived even more elegant results analytically. These works
motivated a series of following researches by using the same idea of
the embedding method. For example, \cite{Zhou04} studied the
continuous-time model with regime-switching, \cite{Zhu04} studied
the risk control over bankruptcy in a more general formulation,
\cite{Yi08} studied the asset-liability management with uncertain
investment horizon, \cite{Gao13} extended this approach to study the
cardinality constrained portfolio selection with mean-variance
optimization. A more complete introduction on the related topics can
be referred to a recent survey paper \citep{Cui22}. Although the
work by \cite{li2000optimal} provides an analytical way to study the
multi-period mean-variance optimization, this approach heavily
depends on the specific model of portfolio selection. It is hardly
applicable to other problems except portfolio selection.

Another research stream on mean-variance optimization is from the
perspective of Markov decision processes (MDPs), since MDP is a
widely adopted methodology to study stochastic dynamic optimization
problems. Classical optimization criteria of MDPs focus on the
expectation of discounted or long-run average of accumulated rewards
\citep{Bertsekas05,Puterman94}. Since the rewards in MDPs are random
variables, it is natural to concern their higher order quantities
rather than expectations. The paper by \cite{sobel1982variance} is
one of the pioneering works on MDPs with a variance-related
optimality criterion. He focused on the variance minimization of
discounted rewards in infinite-horizon MDPs, where some property
analysis were presented for both MDPs and semi-MDPs.
\cite{filar1989variance} studied the variance-penalized MDP with a
penalty for the variability of rewards, by formulating it as a
non-convex mathematical program in the space of state-action
frequencies. \cite{sobel1994mean} and \cite{chung1994mean}
separately studied the mean-variance tradeoff in undiscounted MDPs,
both from the viewpoint of mathematical programming to analyze the
Pareto optima that minimize the variance among the policies with
mean greater than a given value. There are numerous works following
this research stream of MDPs with variance-related criteria. Some
excellent works can be referred to
\cite{Haskell13,Hernandez-Lerma1996,hernandez1999sample,guo2009mean,guo2015first,xia2016,xia18,xia2020,xia2025global},
and references therein, just to name a few.  These aforementioned
works either study steady-state variance MDPs through mathematical
programming approaches
\citep{chung1994mean,filar1989variance,Haskell13,sobel1994mean} and
sensitivity-based optimization methods
\citep{xia2016,xia18,xia2020,xia2025global}, or focus on variance
optimization of accumulated rewards by considering policies whose
mean performance already achieves the optimum, thereby converting
the problem into a standard expected MDP
\citep{hernandez1999sample,guo2009mean,xia18}. Recently,
\cite{bauerle2024} proposed a new approach to analyze the
mean-variance optimization of the instantaneous reward at the
terminal stage of finite-horizon MDPs through a so-called population
version MDP by replacing the original state space with the set of
probability measures on it. The solution of this new MDP model meets
the Bellman optimality principle and is time consistent, but the
computational complexity is intractable since the state has a high
dimensional continuous space. There does not exist an approach to
efficiently analyze and solve the mean-variance optimization of
accumulated rewards in a finite-horizon MDP, which is a very common
motivation since many decision-making problems focus on finite
horizon, such as multi-period portfolio selection and inventory
management.

The community of reinforcement learning has also been paying
attention to the mean-variance optimization of stochastic dynamic
systems, which is called risk-sensitive reinforcement learning. With
the great success of AlphaGo, deep reinforcement learning becomes a
hot research topic where the policy and the value function are
approximated by deep neural networks and policy gradients are
utilized to do optimization. The early work of variance-related
reinforcement learning focuses on improving the sampling efficiency
of gradient estimators for variance-related performance metrics
\citep{Borkar10,Prashanth13,Tamar12}. Some recent works reformulate
the mean-variance optimization with the Fenchel duality
\citep{Xie18} and propose gradient-based algorithms to find local
optima \citep{Bisi20,Zhang21}. A more comprehensive viewpoint on the
risk-sensitive reinforcement learning can be referred to a recent
survey book by \cite{Prashanth22}. In addition, recent studies have
investigated reinforcement learning algorithms in continuous-time
and continuous-state settings, providing a novel perspective for
solving multi-period mean-variance portfolio optimization problems
\citep{huang2024mean,wang2020continuous}. However, all these
reinforcement learning approaches focus on approximated algorithms
for sample path learning, which suffer from slow and local
convergence of gradient algorithms and huge sample size.
Reinforcement learning is algorithm centric and is not applicable to
rigorously study the property of mean-variance optimization. How to
effectively analyze and solve the mean-variance optimization in
finite-horizon MDPs is still an open problem.

Although the mean-variance optimization of stochastic dynamic
systems has been studied in different disciplines including
stochastic control, MDPs, and reinforcement learning, these
approaches focus on different aspects and it seems that they are
hardly merged. In this paper, we aim to study the mean-variance
optimization of accumulated rewards in a finite-horizon
discrete-time MDP, which has been relatively underexplored in the
literature. Our objective is to find an optimal policy among
history-dependent randomized policies to simultaneously maximize the
mean and minimize the variance of accumulated rewards at the
terminal stage. We first formulate a fairly general model of
finite-horizon discrete-time MDPs with mean-variance optimality
criterion. To resolve the challenge of non-additivity and
non-Markovian (or called non-separability) of variance metrics, we
introduce the concepts called pseudo mean and pseudo variance, and
convert the mean-variance MDP (MV-MDP) to a bilevel MDP. The inner
level is a pseudo mean-variance optimization of MDPs and the outer
level is a single parameter optimization of the pseudo mean. The
inner problem of pseudo mean-variance optimization is not a standard
finite-horizon MDP. Considering the fact that the pseudo variance
term contains history rewards, we treat the anticipation of
accumulated rewards from the current stage to the terminal stage as
an auxiliary state and derive an augmented MDP. We show that the
inner pseudo MV-MDP with the augmented state is a standard
finite-horizon MDP and it can be solved by dynamic programming. The
optimality of history-dependent deterministic policies is proved
based on the bilevel formulation, which indicates that Markov
policies may not be optimal any more for this problem. We further
prove that the optimal value function of the inner pseudo MV-MDP is
piecewisely quadratic concave with respect to the outer pseudo mean.
By utilizing these optimality properties, we develop an iterative
algorithm that alternatingly updates the inner optimal policy and
the outer pseudo mean. Our iterative algorithm has a form similar to
policy iteration which exhibits fast convergence in most cases. We
prove that this algorithm converges to local optima, in the sense of
a parameterized space (mixed policy space or parameter space of
pseudo mean). Furthermore, we derive a sufficient condition that can
guarantee the global convergence of our algorithm. We show that the
multi-period portfolio selection problem satisfies this sufficient
condition and our approach exactly coincides with the classical
result by \cite{li2000optimal}. Finally, we use the numerical
experiments in portfolio selection, queueing control, and inventory
management to demonstrate the effectiveness of our approach. The
numerical results show that our approach always finds the global
optimum of the multi-period mean-variance portfolio selection
problem and mean-variance queueing control, while it finds the local
optima of the multi-period mean-variance inventory management
problem.

The contribution of this paper is threefold. First, we derive an
effective approach to study the mean-variance optimization of
accumulated rewards in finite-horizon discrete-time MDPs. To the
best of our knowledge, our work is the first to solve this
long-standing problem in the literature on MDPs. With the concepts
of pseudo mean and pseudo variance, we convert the original problem
to a bilevel MDP where the inner is a state-augmented MDP and the
outer is the optimization of pseudo mean. Different from most MDPs
in the literature, the optimum of our MV-MDP is attainable by
history-dependent deterministic policies, not by Markov
deterministic policies. Second, we propose an efficient policy
iteration type algorithm to solve this MV-MDP problem. We prove that
the algorithm can converge to a local optimum after a finite number
of iterations. We also derive a sufficient condition under which the
algorithm can find the global optimum. Third, we show that our
approach can unify the classical result of multi-period portfolio
selection by \cite{li2000optimal}. As a comparison, our approach has
a much wider applicability since Markov model is much more general
than portfolio selection model, which is also demonstrated by
numerical examples of mean-variance optimization for queueing
control and inventory management.

The rest of the paper is organized as follows. In
Section~\ref{sec:model}, we give the problem formulation of
finite-horizon MDPs with mean-variance criterion.
Section~\ref{sec:opt} presents the main theoretical results,
including the bilevel MDP framework and the optimality analysis of
this problem. In Section~\ref{sec:alg}, we derive the iterative
algorithm and the convergence analysis for mean-variance
finite-horizon MDPs. In Section~\ref{sec:exp}, we apply our approach
and algorithm to solve the multi-period mean-variance optimization
for portfolio selection, queueing control, and inventory management,
respectively. Finally, we conclude this paper in
Section~\ref{sec:con}.

\section{Problem Formulation}\label{sec:model}
A finite-horizon discrete-time MDP is denoted by a collection
$\mathcal{M} := \langle \mathcal T, \mathcal S, \mathcal A,
(\mathcal{A}(s) \subset \mathcal A, s \in \mathcal{S}), (P_t, t \in
\mathcal T), (r_t, t \in \mathcal T) \rangle$, where $\mathcal
T:=\left\{0,1,\ldots,T-1\right\}$ is the set of decision epochs with
terminal stage $T < \infty$; $\mathcal S$ and $\mathcal A$ represent
the finite spaces of states and actions, respectively;
$\mathcal{A}(s)$ denotes the admissible action set at state $s \in
\mathcal{S}$ with $\cup_{s \in
\mathcal{S}}\mathcal{A}(s)=\mathcal{A}$; $P_t$ denotes the Markov
kernel at decision epoch $t$ and $P_t(\cdot|s,a)$ is a probability
measure on $\mathcal S$ for each given $(s,a) \in \mathcal K$, where
$\mathcal{K}:=\left\{(s,a): s \in \mathcal{S}, a \in
\mathcal{A}(s)\right\}$ is defined as the set of admissible
state-action pairs; and $r_t : \mathcal K \rightarrow \mathbb{R}$ is
the reward function with minimum $\underline r$ and maximum $\bar
r$, where $r_t(s,a)$ denotes the reward at decision epoch $t$
determined by the current state-action pair $(s,a) \in \mathcal{K}$.
Suppose the system state is $s_t \in \mathcal S$ at the current time
$t$, and an action $a_t \in \mathcal{A}(s_t)$ is adopted, the system
will receive an instantaneous reward $r_t(s_t,a_t)$, and then move
to a new state $s_{t+1} \in \mathcal S$ at the next time $t+1$
according to the transition probability $P_t(s_{t+1}|s_t,a_t)$. The
policy $u$ prescribes the action-selection rule at each decision
time epoch based on either history or just the current state, where
the former refers to a \emph{history-dependent policy} while the
latter refers to a \emph{Markov policy}. Specifically, a
\emph{history-dependent randomized policy} $u:=(u_t; t \in \mathcal
T)$ is a sequence of stochastic kernels $u_t$ which is a probability
distribution on action space $\mathcal{A}$ given history $h_t :=
\left\{s_0,a_0,s_1,\ldots,s_{t-1},a_{t-1},s_t\right\} \in
\mathcal{H}_t:=\mathcal K^t \times \mathcal S$ and $\sum_{a \in
\mathcal A(s_t)}u_t(a|h_t)=1$. Further,  $u$ degenerates into a
\emph{Markov randomized policy} if $u_t$ depends on the current
state $s_t$ instead of history $h_t$, i.e.,
$u_t(\cdot|h_t)=u_t(\cdot|s_t)$, $\forall h_t \in \mathcal{H}_t$. In
addition, if $u_t$ is a deterministic decision rule, i.e., $u_t:
\mathcal{H}_t \rightarrow \mathcal{A}$ or $u_t: \mathcal{S}
\rightarrow \mathcal{A}$, we call $u$ a \emph{history-dependent
deterministic policy} or \emph{Markov deterministic policy},
respectively. For notational simplicity, we denote by $\mathcal
U^{\rm HR}$, $\mathcal U^{\rm MR}$, $\mathcal U^{\rm HD}$, and
$\mathcal U^{\rm MD}$ the sets of all history-dependent randomized
policies, Markov randomized policies, history-dependent
deterministic policies, and Markov deterministic policies,
respectively. Obviously, we have $\mathcal U^{\rm HR} \supset
\mathcal U^{\rm MR} \supset \mathcal U^{\rm MD}$ and $\mathcal
U^{\rm HR} \supset \mathcal U^{\rm HD} \supset \mathcal U^{\rm MD}$.
For each initial state $s_0 \in \mathcal S$ and policy $u \in
\mathcal U^{\rm HR}$, by Ionescu Tulcea's Theorem
\citep[P.178]{Hernandez-Lerma1996}, there exists a unique
probability measure $\mathbbm{P}_{s_0}^{u}$ on the measurable space
$(\mathcal K^T \times \mathcal S, \mathcal B(\mathcal K^T \times
\mathcal S))$ such that
\begin{equation*}
\mathbbm{P}_{s_0}^{u}(s_0,a_0,s_1,\ldots,s_{T-1},a_{T-1},s_T) =
u_0(a_0|s_0)P_0(s_1|s_0,a_0)\cdots
u_{T-1}(a_{T-1}|h_{T-1})P_{T-1}(s_{T}|s_{T-1},a_{T-1}).\nonumber
\end{equation*}
Here and in what follows, we denote by $\mathbbm{E}_{s_0}^{u}$ the
expectation operator corresponding to $\mathbbm{P}_{s_0}^{u}$.

%{\color{red}For each initial state $s_0 \in \mathcal S$ and policy $u \in \mathcal U^{\rm RH}$, by the Theorem of C. Ionescu-Tulcea
%\citep[P.178]{Hernandez-Lerma1996}, there exists a unique probability space $(\Omega,\mathcal F, \mathbbm{P}_{s_0}^{u})$, where the sample space
%$\Omega:=\mathcal K^T \times \mathcal S$, the corresponding Borel $\sigma$-algebra $\mathcal F:=\mathcal B(\Omega)$, and the probability measure
%\begin{equation}
%   \nonumber
%\mathbbm{P}_{s_0}^{u}(s_0,a_0,s_1,\ldots,s_{T-1},a_{T-1},s_T) :=
%u_0(a_0|s_0)P_0(s_1|s_0,a_0)\cdots
%u_{T-1}(a_{T-1}|h_{T-1})P_{T-1}(s_{T}|s_{T-1},a_{T-1}).
%\end{equation}
%We denote by $\mathbbm{E}_{s_0}^{u}$ the expectation operator with respect to $\mathbbm{P}_{s_0}^{u}$. We also define a state-action process $\big((S_t, A_t); t \in \mathcal T\big)$ on $(\Omega,\mathcal F, \mathbbm{P}_{s_0}^{u})$ such that
%\begin{equation}
%   \nonumber
%   S_t(\omega)=s_t, A_t(\omega)=a_t, \quad\forall \omega=(s_0,a_0,\ldots,s_T) \in \Omega, t \in \mathcal T.
%\end{equation}
%}
This paper aims to study the finite-horizon MV-MDPs where both the
mean and the variance of accumulated rewards are optimized.
Specifically, the horizon $T$ is supposed to be fixed and we denote
by random variable
\begin{equation}
    \nonumber
    R_{t:T} := \sum\limits_{\tau=t}^{T-1}  r_\tau(s_\tau, a_\tau) +
    r_T(s_T) = \sum\limits_{\tau=t}^{T-1}  r_\tau(s_\tau, a_\tau)
\end{equation}
the accumulated rewards from stage $t$ to the terminal stage $T$, where we assume
$r_T(s_T) \equiv 0$ without loss of generality. Given an initial
state $s_0 \in \mathcal{S}$ and a policy $u \in \mathcal U^{\rm HR}$,
the mean and the variance of $T$-horizon accumulated rewards are as
follows, respectively.
\begin{equation}\label{eq_muvar}
\begin{array}{ccl}
    \mu^u_0 (s_0) &:=& \mathbbm{E}_{s_0}^{u}[R_{0:T}], \\
    \sigma^u_0 (s_0) &:=& \mathbbm{E}_{s_0}^{u}\big[\big(R_{0:T}-\mu^u_0 (s_0)\big)^2\big].
\end{array}
\end{equation}
To derive the \emph{Pareto optima} of a multi-objective optimization
problem, we use the so-called global criterion method in which all
the multiple objective functions are linearly combined to form a
single objective function \citep{marler2004survey}. That is, we
introduce a risk aversion coefficient $\lambda \ge 0$ and define the
mean-variance value of the $T$-horizon accumulated rewards under
policy $u$ as
\begin{equation}\label{eq_Ju}
   J^u_0 (s_0) := \mu^u_0 (s_0)-\lambda \sigma^u_0 (s_0), \quad s_0 \in \mathcal{S},~ u \in \mathcal U^{\rm HR}.
\end{equation}
In what follows, $\lambda$ is fixed unless otherwise stated. We
denote by $\mathcal M$ the finite-horizon MV-MDP which aims to
maximize the combined metrics of mean and variance for each initial
state $s_0$, i.e.,
\begin{flalign}\label{equ:mv_mdp}
&\mathcal M: \hspace{4cm} J^*_0(s_0) := \sup\limits_{u \in \mathcal
U^{\rm HR}}   J^u_0 (s_0), \quad s_0 \in \mathcal{S},&
\end{flalign}
where $J^*_0(\cdot)$ is called the optimal value function of the
finite-horizon MV-MDP, and a policy $u^* \in \mathcal U^{\rm HR}$ is
called an \emph{optimal policy} for solving $\mathcal M$ if it
attains the optimal value, i.e., $J^{u^*}_0 (\cdot) = J^*_0(\cdot)$.

Note that, the finite-horizon MV-MDP $\mathcal M$ in
(\ref{equ:mv_mdp}) cannot be solved by directly using the method of
dynamic programming since the variance term in (\ref{eq_Ju}) is not
\emph{separable} into the summation of multiple \emph{Markovian} and
\emph{additive} terms, which also causes the \emph{time
inconsistency} \citep{Ruszczynski10,Shapiro09}. This fundamentally
challenging problem attracts a lot of research attention in
different disciplines, while it is not completely resolved in the
literature, as we introduced in Section~\ref{sec:intro}. In this
paper, we aim to propose a new optimization approach to accomplish
this challenge.
% Before stating the optimization approach, we derive the following lemma to show that the optimality of Markov randomized policies for the finite horizon  MV-MDP (\ref{equ:mv_mdp}).
% \begin{lem}\label{lem:mark}
%   The optimal policy can be attained by Markov randomized policies.
% \end{lem}
%\begin{proof}
%   This result holds due to the relationship between the history-dependent randomized policy and Markov randomized policy (Theorem 5.5.1 of \cite{Puterman94}).
%\end{proof}
%
%With Lemma~\ref{lem:mark}, we  can limit our policy searching
%space to $\Pi^M$.

\section{Optimization Approach}\label{sec:opt}
In this section, we propose a new optimization approach to study the
finite-horizon MV-MDP. First, we notice that the variance
of a random variable $X$ has the following property
\begin{equation}\label{eq_pseudovar}
\sigma(X) = \mathbbm E[(X - \mathbbm E[X])^2] = \min\limits_{y \in
\mathbb{R}} \mathbbm E[(X - y)^2] = \min\limits_{y \in \mathbb{R}}
\hat{\sigma}(X,y),
\end{equation}
where we call
$\hat{\sigma}(X,y):=\mathbbm E[(X - y)^2]$ the \emph{pseudo
variance} of $X$ with the \emph{pseudo mean} $y \in \mathbb R$, and
the minimum in (\ref{eq_pseudovar}) is attained at $y^*=\mathbbm
E[X]$. That is, the pseudo variance $\hat{\sigma}(X,y)$ equals the
real variance $\sigma(X)$ when the
pseudo mean $y$ equals the real mean $\mathbbm E[X]$ \citep{xia2016}.

Using the above property (\ref{eq_pseudovar}) and definition
(\ref{eq_muvar}), we can convert the finite-horizon MV-MDP
(\ref{equ:mv_mdp}) to a \emph{bilevel MDP} by introducing a pseudo
mean $y_0 \in \mathbb R$, i.e.,
\begin{align}\label{equ:bilevel}
J^{*}_0 (s_0)
&=\sup\limits_{u \in \mathcal U^{\rm HR}} \left\{\mu^u_0 (s_0)-\lambda \sigma^u_0 (s_0)\right\}    \nonumber\\
&=\sup\limits_{u \in \mathcal U^{\rm HR}}\max\limits_{y_0 \in \mathbb {R}} \left\{\mathbbm{E}_{s_0}^{u}[R_{0:T}]-\lambda \mathbbm{E}_{s_0}^{u}\big[\big(R_{0:T}-y_0\big)^2\big]\right\}   \nonumber \\
&=\max\limits_{y_0 \in \mathbb{R}}\sup\limits_{u \in \mathcal U^{\rm
HR}} \mathbbm{E}_{s_0}^{u}\big[R_{0:T}-\lambda\big(R_{0:T}-y_0\big)^2\big].
\end{align}
%The bilevel optimization is a standard conversion in MV-MDPs
%\citep{xia2016,xia2020}, which is also used in finite/infinite
%horizon CVaR MDPs \citep{bauerle2011,huang2016}, due to the
%properties of variance: $\var(R^0)=\min\limits_{y_0 \in \mathcal
%R}\mathbbm{E}_{s_0}^{u}\big[\big(R^0-y_0\big)^2\big]$ and of CVaR:
%$\CVaR_\alpha(R^0)=\min\limits_{y_0 \in \mathcal
%R}\left\{y_0+\frac{1}{1-\alpha}\mathbbm{E}_{s_0}^{u}[R^0-y_0]^+\right\}$.
The outer level of (\ref{equ:bilevel}) is a single parameter
optimization problem with variable $y_0$, and the inner level is a
policy optimization problem of maximizing the mean minus pseudo
variance. For notational simplicity, we denote the \emph{pseudo
mean-variance} of the $T$-horizon accumulated rewards under pseudo
mean $y_0 \in \mathbb R$ and policy $u \in \mathcal U^{\rm HR}$ by
\begin{equation}\label{eq_pseudoMV}
   \hat J^u_0(s_0,y_0):=\mathbbm{E}_{s_0}^{u}\big[R_{0:T}-\lambda\big(R_{0:T}-y_0\big)^2\big], \quad s_0 \in \mathcal {S}.
\end{equation}
We call the inner optimization problem a \emph{pseudo mean-variance
MDP} (pseudo MV-MDP) which is denoted by $\hat{\mathcal M}(y_0)$ and
aims to maximize the pseudo mean-variance under pseudo mean $y_0 \in
\mathbb R$ for each initial state $s_0 \in \mathcal S$, i.e.,
\begin{flalign}\label{equ:inner}
&\hat{\mathcal M}(y_0): \hspace{3cm}  \hat J^*_0(s_0,y_0) =
\sup\limits_{u \in \mathcal U^{\rm HR}} \hat J^u_0(s_0,y_0),
\quad  s_0 \in \mathcal {S}.&
\end{flalign}
We call $\hat J^*_0(\cdot,y_0) $ the optimal pseudo mean-variance
function. Further, a policy $\hat u^* \in \mathcal{U}^{\rm HR}$ is
called an optimal policy of the pseudo MV-MDP problem
(\ref{equ:inner}) if $\hat J^{\hat u^*}_0(\cdot,y_0)=\hat
J^*_0(\cdot,y_0)$.

It is worth noting that the inner problem $\hat{\mathcal M}(y_0)$ in
(\ref{equ:inner}) is not a standard MDP, because the square term
$(R_{0:T}-y_0)^2$ in the objective (\ref{eq_pseudoMV}) is not additive
and we cannot separate (\ref{eq_pseudoMV}) into a recursive form.
Below, we show that by defining an augmented MDP, we can treat
(\ref{equ:inner}) as a
standard finite-horizon MDP with an extended state space. %The main
%idea is to take the square term $(R^0-y_0)^2$ as a whole since it is
%not separable.

We define a new MDP by tuple $\widetilde{\mathcal{M}}=\langle {\mathcal
T},
\tilde{\mathcal{S}},\tilde{\mathcal{A}},(\tilde{\mathcal{A}}(\tilde{s}) \subset \tilde{\mathcal{A}},
\tilde{s} \in \tilde{\mathcal{S}}),(\tilde{P}_t,t \in {\mathcal
T}),(\tilde{r}_t,t \in {\mathcal T})\rangle$ with a 2-dimensional
state space $\tilde{\mathcal{S}} := \mathcal{S} \times \mathbb{R}$,
where the first dimension is the state of the original MDP and the
second dimension represents the anticipation of accumulated rewards
from the current stage to the terminal stage $T$. The action space
$\tilde{\mathcal{A}}:=\mathcal{A}$ and the admissible action set
$\tilde{\mathcal{A}}(s,y):=\mathcal{A}(s)$, for any augmented state
$(s,y) \in \tilde{\mathcal{S}}$. Suppose the state is $(s_t,y_t) \in
\tilde{\mathcal{S}}$ at time $t \in \mathcal T$ and an action $a_t
\in \mathcal{A}(s_t)$ is adopted, the system will receive an
instantaneous reward $\tilde{r}_t(s_t,y_t,a_t):=r_t(s_t,a_t)$, and
then move to a new state $(s_{t+1},y_{t+1}) \in \tilde{\mathcal{S}}$
which is determined by the transition kernel $P_t$ and the one-step
reward $r_t$ as follows.
\begin{align}
    \nonumber
    s_{t+1} &\sim P_t(\cdot|s_t,a_t), \\
    y_{t+1} &= y_t-r_t(s_t,a_t). \nonumber
\end{align}
That is,
%$\tilde{P}_t(s',
%y'|s,y,a)=P_t(s'|s,a)\delta_{y-r_t(s,a)}(y')$, where
%$\delta_{\cdot}(\cdot)$ denotes the Dirac measure.
$\tilde{P}_t(s', y'|s,y,a) :=
P_t(s'|s,a)\mathbb{I}_{\left\{y-r_t(s,a)\right\}}(y')$, where
$\mathbb I_{\left\{y-r_t(s,a)\right\}}(\cdot)$ denotes an indicator
function. The terminal reward of this MDP $\widetilde{\mathcal{M}}$ is
\begin{equation*}
\tilde{r}_{T}(s_T,y_T) := -\lambda y_T^2.
\end{equation*}
We denote by $\tilde{\mathcal U}^{\rm HR}$ the set of all
history-dependent randomized policies $\tilde{u}=(\tilde{u}_t; t \in
\mathcal T)$, where $\tilde{u}_t$ is a probability measure on
$\mathcal{A}$ given history $\tilde{h}_t =
\left\{s_0,y_0,a_0,\ldots,s_t,y_t\right\}$. Similarly, we denote by
$\tilde{\mathcal U}^{\rm MR}$ and $\tilde{\mathcal U}^{\rm MD}$ the
sets of all Markov randomized policies and Markov deterministic
policies of the MDP $\widetilde{\mathcal{M}}$, respectively. Given
initial state $(s_0,y_0) \in \tilde{\mathcal{S}}$ and policy
$\tilde{u} \in \tilde{\mathcal U}^{\rm HR}$, we denote by
$\mathbbm{P}_{(s_0,y_0)}^{\tilde{u}}$ the unique probability measure
on the space of trajectories of augmented states and actions and by
$\mathbbm{E}_{(s_0,y_0)}^{\tilde{u}}$ the expectation operator
corresponding to $\mathbbm{P}_{(s_0,y_0)}^{\tilde{u}}$.

With the definition of this new finite-horizon augmented MDP
$\widetilde{\mathcal{M}}$, we focus on the criterion of expected total
rewards. Given an initial state $(s_0,y_0) \in \tilde{\mathcal{S}}$
and a policy $\tilde{u} \in \tilde{\mathcal U}^{\rm HR}$, we define
the $T$-horizon expected rewards as below.
\begin{eqnarray*}
V_0^{\tilde{u}}(s_0,y_0) &:=& \mathbbm{E}_{(s_0,y_0)}^{\tilde{u}}
\big[\sum_{t=0}^{T-1}\tilde{r}_{t}(s_t,y_t,a_t) + \tilde{r}_{T}(s_T,y_T)] \\
&=& \mathbbm{E}_{(s_0,y_0)}^{\tilde{u}}
\big[\sum_{t=0}^{T-1} r_t(s_t,a_t) -\lambda y_T^2] \\
&=& \mathbbm{E}_{(s_0,y_0)}^{\tilde{u}}
\big[R_{0:T}-\lambda\big(R_{0:T}-y_0\big)^2\big],
\end{eqnarray*}
where the last equality recursively utilizes the fact
$y_{t+1}=y_t-r_t(s_t,a_t)$. It is interesting to find that the above
expected total rewards is exactly the same as the pseudo
mean-variance defined in (\ref{eq_pseudoMV}). The objective of MDP
$\widetilde{\mathcal M}$ is to maximize the above expected total
rewards for each initial state $(s_0,y_0) \in \tilde{\mathcal S}$,
i.e.,
\begin{flalign}\label{equ:newmdp}
&\widetilde{\mathcal M}: \hspace{4cm} V_0^*(s_0,y_0) =
\sup\limits_{\tilde{u} \in \tilde{\mathcal U}^{\rm HR}}
V_0^{\tilde{u}}(s_0,y_0), \quad (s_0,y_0) \in \tilde{\mathcal
S},&
\end{flalign}
where $V_0^*(\cdot,\cdot)$ is called the optimal value function and
we denote by $\tilde{u}^* \in \tilde{\mathcal U}^{\rm HR}$ an
optimal policy if it attains the above optimal value, i.e.,
$V_0^{\tilde{u}^*}(\cdot,\cdot) = V_0^*(\cdot,\cdot)$. It is worth
noting that $\widetilde {\mathcal M}$ in (\ref{equ:newmdp}) is a
standard finite-horizon MDP with the expectation criterion for total
rewards, which can be solved directly by dynamic programming. In
contrast, $\hat {\mathcal M}(y_0)$ in (\ref{equ:inner}) is an MDP
problem with the pseudo mean-variance criterion, to which the
classical dynamic programming principle is not applicable.

Next, we establish the relationship between the two MDP problems
$\widetilde{\mathcal M}$ and $\hat {\mathcal M}(y_0)$, as stated by
Theorem~\ref{thm:relation} below.

\begin{thm}\label{thm:relation}
For each $y_0 \in \mathbb{R}$ and $\tilde{u}=(\tilde{u}_t;t \in
\mathcal T) \in \tilde{\mathcal U}^{\rm HR}$, there exists a policy
$u = (u_t;t \in \mathcal T) \in \mathcal U^{\rm HR}$ such that
\begin{equation}\label{equ:relation}
    \hat J_0^{u}(s_0,y_0) =  V_0^{\tilde{u}}(s_0,y_0), \quad\forall s_0 \in \mathcal{S}.
\end{equation}
And further
\begin{equation}\label{equ:opt_relation}
    \hat J_0^*(s_0,y_0) =  V_0^*(s_0,y_0),\quad\forall (s_0,y_0) \in \tilde{\mathcal{S}}.
\end{equation}
\end{thm}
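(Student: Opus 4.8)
The plan is to exploit the fact that, once the initial pseudo mean $y_0$ is fixed, the auxiliary coordinate carries no independent randomness: the recursion $y_{t+1}=y_t-r_t(s_t,a_t)$ gives $y_t = y_0 - \sum_{\tau=0}^{t-1} r_\tau(s_\tau,a_\tau)$, so $y_t$ is a deterministic, measurable function of $y_0$ and the original history $h_t=\{s_0,a_0,\ldots,s_t\}$. Consequently, for fixed $y_0$ there is a bijection $\phi_{y_0}: h_t \mapsto \tilde h_t=\{s_0,y_0,a_0,\ldots,s_t,y_t\}$ between original and augmented histories that simply interleaves the deterministically reconstructed $y$-coordinates. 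All of the work is in showing that this bijection intertwines the two objective functionals, and the key simplification is that both $\hat J_0^u$ and $V_0^{\tilde u}$ depend on the sample path only through $R_{0:T}$ (for the fixed $y_0$), so it suffices to match the law of $R_{0:T}$, equivalently the law of the state--action trajectory $(s_0,a_0,\ldots,s_{T-1},a_{T-1},s_T)$.

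For the first claim (\ref{equ:relation}), given $\tilde u=(\tilde u_t) \in \tilde{\mathcal U}^{\rm HR}$ I would define $u=(u_t) \in \mathcal U^{\rm HR}$ by $u_t(\cdot \mid h_t) := \tilde u_t(\cdot \mid \phi_{y_0}(h_t))$; note that $u$ legitimately depends on $y_0$, which is precisely why the optimal policy of $\mathcal M$ may be genuinely history-dependent rather than Markov. I would then argue by induction on $t$, using the product form of $\mathbbm P_{s_0}^u$ and $\mathbbm P_{(s_0,y_0)}^{\tilde u}$ granted by Ionescu--Tulcea's theorem, that the two measures induce the same marginal on the state--action trajectory: at each epoch the action kernels agree by construction, the state kernel $P_t(\cdot\mid s_t,a_t)$ is common to both models, and the $y$-transition contributes only the indicator $\mathbb I_{\{y_t-r_t(s_t,a_t)\}}$, which is pinned down deterministically and hence does not alter the marginal. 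Matching the law of the trajectory yields identical laws for $R_{0:T}$, and since both functionals are the same function of $R_{0:T}$ and $y_0$, equation (\ref{equ:relation}) follows at once.

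For the optimal-value identity (\ref{equ:opt_relation}) I would fix $y_0$ and obtain equality of the suprema from two value-preserving maps. The forward map just constructed shows that every $\tilde u$ is matched by some $u$ with $\hat J_0^u(s_0,y_0)=V_0^{\tilde u}(s_0,y_0)$, whence $V_0^*(s_0,y_0) \le \hat J_0^*(s_0,y_0)$. For the reverse inequality I would run the correspondence in the other direction: given $u \in \mathcal U^{\rm HR}$, define $\tilde u_t(\cdot \mid \tilde h_t):=u_t(\cdot \mid h_t)$, where $h_t$ is the state--action projection of $\tilde h_t$ obtained by deleting the $y$-coordinates; this is a legitimate augmented policy, and the same trajectory-matching argument gives $V_0^{\tilde u}(s_0,y_0)=\hat J_0^u(s_0,y_0)$, hence $\hat J_0^*(s_0,y_0) \le V_0^*(s_0,y_0)$. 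Combining the two inequalities gives (\ref{equ:opt_relation}) for every $(s_0,y_0) \in \tilde{\mathcal S}$; because the argument is purely about equality of the underlying value sets, it is insensitive to whether the suprema are attained.

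I expect the only real obstacle to be the careful bookkeeping in the inductive coupling of $\mathbbm P_{s_0}^u$ and $\mathbbm P_{(s_0,y_0)}^{\tilde u}$ --- in particular, verifying the measurability of $\phi_{y_0}$ and checking that the deterministic $y$-update integrates out to a factor of one at each stage, so that the state--action marginals genuinely coincide. Everything downstream of that coupling is routine, since the two objectives are the identical function of $R_{0:T}$.
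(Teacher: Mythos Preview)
Your proposal is correct and follows essentially the same route as the paper: the paper defines the identical policy $u_t(\cdot\mid h_t):=\tilde u_t(\cdot\mid \phi_{y_0}(h_t))$ and asserts (\ref{equ:relation}) because ``the two policies $u$ and $\tilde u$ share the same decision rule,'' which is precisely your trajectory-coupling argument stated tersely. For (\ref{equ:opt_relation}) the paper obtains $V_0^*\le \hat J_0^*$ from (\ref{equ:relation}) and the reverse inequality from the embedding $\mathcal U^{\rm HR}\subset \tilde{\mathcal U}^{\rm HR}$, which is exactly your reverse map that ignores the $y$-coordinates.
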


%The proof can be found in the Appendix.
Theorem~\ref{thm:relation} implies that the inner pseudo MV-MDP
$\hat {\mathcal M}(y_0)$ in (\ref{equ:inner}) can be converted to a
standard MDP $\widetilde {\mathcal M}$ in (\ref{equ:newmdp}) with the
criterion of expected total rewards, which can be solved by dynamic
programming. To this end, we denote by $\mathcal B(\tilde{\mathcal{S}})$ the
space of all bounded functions on $\tilde{\mathcal{S}}$ and define
%an operator $\mathbb L_t^\varphi: \mathcal B(\tilde{\mathcal{S}}) \rightarrow
%\mathcal B(\tilde{\mathcal{S}})$ for stochastic kernel $\varphi$ on
%$\mathcal{A}$ given $\tilde{\mathcal{S}}$ and $t \in \mathcal T$ by
%\begin{equation}\label{equ:opera}
%   \mathbb L_t^\varphi v(s,y) := \sum\limits_{a \in \mathcal{A}(s)}
%    \varphi(a|s,y)\big[r_t(s,a)+\sum\limits_{s' \in
%        \mathcal{S}}P_t (s'|s,a)v(s',y-r_t(s,a))\big],
%    \quad \forall v \in \mathcal B(\tilde{\mathcal{S}}), (s,y) \in \tilde{\mathcal{S}},
%    \end{equation}
%where we can view $\varphi$ as a decision rule at time $t$.
%Similarly, we also define
an operator $\mathbb L^*_t: \mathcal B(\tilde{\mathcal{S}})
\rightarrow \mathcal B(\tilde{\mathcal{S}})$ for $t \in \mathcal T$ by
\begin{equation}\label{equ:opt_opera}
\mathbb L^*_t v(s,y) := \max\limits_{a \in \mathcal{A}(s)}
\left\{r_t(s,a)+\sum\limits_{s' \in \mathcal{S}}P_t
(s'|s,a)v(s',y-r_t(s,a))\right\},\quad v \in
\mathcal B(\tilde{\mathcal{S}}), (s,y) \in \tilde{\mathcal{S}}.
\end{equation}
For notational simplicity, we further denote by
\begin{equation}
    \nonumber
     V_{t}^{\tilde{u}}(s_t,y_t) := \mathbbm{E}_{(s_0,y_0)}^{\tilde{u}}
    \big[R_{t:T}-\lambda\big(R_{t:T}-y_t\big)^2|s_t,y_t\big],
    \quad (s_t,y_t) \in \tilde{\mathcal S},~t \in \mathcal T
\end{equation}
and
\begin{equation}\label{equ:value_t}
    V_{t}^*(s_t,y_t) := \sup\limits_{\tilde{u} \in \tilde{\mathcal U}^{\rm MR}}V_{t}^{\tilde{u}}(s_t,y_t),
    \quad (s_t,y_t) \in \tilde{\mathcal S},~t \in \mathcal T
\end{equation}
the expected total rewards under Markov randomized policy $\tilde{u}
\in \tilde{\mathcal U}^{\rm MR}$ and the optimal value function from
stage $t$ to terminal stage $T$, respectively.

%For the standard MDP $\tilde {\mathcal M}$ in (\ref{equ:newmdp})
%with finite-horizon expected total reward criterion, it follows from
%Theorem~4.2.1 of \cite{Puterman94} that $V_0^{\tilde{u}}(s_0,y_0)$
%with randomized Markov policy $\tilde{u} \in \tilde{\mathcal U}^{\rm
%RM}$ can be evaluated by successively conducting a series of
%operators $\left\{L_t^{\tilde u_t}, t \in \mathcal T\right\}$,
%starting from the initial value $V_T^{\tilde{u}}(s_T,y_T):=-\lambda
%y_T^2$. We summarize this result in Lemma \ref{lem:dp} as follows.
%
%\begin{lem}\label{lem:dp}
%    Given  $\tilde u = (\tilde u_t,t \in \mathcal T) \in \tilde{\mathbb U}^{\rm RM}$, then we have
%    \begin{equation}\label{equ:dp}
%        V_t^{\tilde u}=L_t^{\tilde u_t}V_{t+1}^{\tilde u}, \quad\forall t \in \mathcal T.
%    \end{equation}
%\end{lem}
%
%The next theorem establishes the dynamic programming  to solve the
%standard MDP $\tilde {\mathcal M}$ in (\ref{equ:newmdp}) and derives
%the optimal policy of the inner pseudo MV-MDP $\hat {\mathcal
%M}(y_0)$ in (\ref{equ:inner}).

For the standard MDP $\widetilde {\mathcal M}$ in (\ref{equ:newmdp})
with finite-horizon expected total reward criterion, it is
straightforward that the optimal value function defined in
(\ref{equ:value_t}) can be solved by successively conducting a
series of operators $\left\{\mathbb L^*_t; t \in \mathcal
T\right\}$, starting from the initial value
$V^*_T(s_T,y_T):=-\lambda y_T^2$. We then establish the optimal
policy of the inner pseudo MV-MDP $\hat {\mathcal M}(y_0)$ in
(\ref{equ:inner}) by utilizing the optimal policy of the standard
MDP $\widetilde {\mathcal M}$. We summarize this result in
Theorem~\ref{thm:opt} as follows.
\begin{thm}\label{thm:opt}
The function sequence $\left\{V_t^*; t \in \mathcal T\right\}$ defined in (\ref{equ:value_t}) satisfies
\begin{equation}\label{equ:opt_dp}
    V_t^*=\mathbb L^*_t V_{t+1}^*,\quad\forall t \in \mathcal T \text{ with } V_T^*(s_T,y_T) := -\lambda y_T^2.
\end{equation}
In addition, there exists $a_t^*(s_t,y_t) \in \mathcal A(s_t)$ that attains the maximum
in $\mathbb L^*_t V_{t+1}^*(s_t,y_t)$, we have
\begin{itemize}
  \item [$(a)$] the Markov deterministic policy $\tilde{u}^*=(\tilde{u}_t^*;t \in \mathcal T) \in \tilde{\mathcal U}^{\rm {MD}}$ with $\tilde{u}_t^*(s_t,y_t) = a_t^*(s_t,y_t)$ is an optimal policy for the standard MDP $\widetilde {\mathcal M}$ in (\ref{equ:newmdp}).
  \item [$(b)$] given $y_0$, the history-dependent deterministic policy $\hat u^*=(\hat u^*_{t}; t \in \mathcal T) \in \mathcal U^{\rm {HD}}$ with $\hat u^*_{t}(s_0,a_0,\ldots,s_t) := \tilde{u}_t^*(s_t,{y_0}-\sum\limits_{\tau=0}^{t-1} r_\tau(s_\tau,a_\tau))$ is an optimal policy for the inner pseudo MV-MDP $\hat {\mathcal M}(y_0)$ in (\ref{equ:inner}).
\end{itemize}
\end{thm}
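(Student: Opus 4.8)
The plan is to treat $\widetilde{\mathcal M}$ as what it is — a standard finite-horizon MDP with the expected total-reward criterion — prove the Bellman recursion (\ref{equ:opt_dp}) and part $(a)$ simultaneously by backward induction on the stage index, and then invoke Theorem~\ref{thm:relation} to obtain part $(b)$.

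The base case is the terminal condition $V_T^*(s_T,y_T) = -\lambda y_T^2$, which is immediate since no decision remains at stage $T$. For the inductive step at stage $t$, I assume $V_{t+1}^*$ is the optimal continuation value and is attained, from stage $t+1$ onward, by playing the stagewise maximizers. Writing $V_t^{\tilde{u}}(\tilde{h}_t)$ for the continuation value of a policy $\tilde{u} \in \tilde{\mathcal U}^{\rm HR}$ conditional on the history $\tilde{h}_t$ through stage $t$, the deterministic update $y_{t+1} = y_t - r_t(s_t,a)$ of the auxiliary coordinate yields the one-step decomposition
\[
V_t^{\tilde{u}}(\tilde{h}_t) = \sum_{a \in \mathcal A(s_t)} \tilde{u}_t(a|\tilde{h}_t)\Big[ r_t(s_t,a) + \sum_{s' \in \mathcal S} P_t(s'|s_t,a)\, V_{t+1}^{\tilde{u}}(\tilde{h}_t,a,s',y_t - r_t(s_t,a)) \Big].
\]
By the induction hypothesis each inner continuation value is bounded by $V_{t+1}^*(s', y_t - r_t(s_t,a))$, which depends on the history only through the next augmented state; substituting this bound and maximizing over $a$ gives $V_t^{\tilde{u}}(\tilde{h}_t) \le \mathbb L_t^* V_{t+1}^*(s_t,y_t)$ for every history and every policy, hence $V_t^* \le \mathbb L_t^* V_{t+1}^*$. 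Conversely, the Markov deterministic rule that plays $a_t^*(s_t,y_t)$ at stage $t$ and the optimal continuation thereafter attains $\mathbb L_t^* V_{t+1}^*(s_t,y_t)$ exactly, yielding the reverse inequality. This closes the induction, proving (\ref{equ:opt_dp}) and the optimality of $\tilde{u}^*$ asserted in part $(a)$.

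For part $(b)$ I would read the conclusion off Theorem~\ref{thm:relation}. The policy $\hat u^*$ is exactly the original-MDP image of $\tilde{u}^*$ under the correspondence in that theorem, because $y_0 - \sum_{\tau=0}^{t-1} r_\tau(s_\tau,a_\tau)$ is precisely the auxiliary coordinate $y_t$ generated by the augmented dynamics along the history $(s_0,a_0,\ldots,s_t)$. Hence the value identity (\ref{equ:relation}) gives $\hat J_0^{\hat u^*}(s_0,y_0) = V_0^{\tilde{u}^*}(s_0,y_0)$, part $(a)$ equates the right-hand side with $V_0^*(s_0,y_0)$, and the optimal-value identity (\ref{equ:opt_relation}) rewrites this as $\hat J_0^*(s_0,y_0)$; chaining the three equalities shows that $\hat u^*$ attains the optimum of $\hat{\mathcal M}(y_0)$.

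The step I expect to be most delicate is the upper-bound half of the induction when $\tilde{u}$ is genuinely history-dependent: one must verify that, conditioned on $\tilde{h}_t$, the continuation value under $\tilde{u}$ influences the future only through the current augmented state $(s_t,y_t)$, so that the state-only bound $V_{t+1}^*$ may legitimately replace $V_{t+1}^{\tilde{u}}$. This Markovian reduction is exactly what simultaneously establishes the sufficiency of Markov policies and reconciles the supremum over $\tilde{\mathcal U}^{\rm MR}$ in definition (\ref{equ:value_t}) with the supremum over $\tilde{\mathcal U}^{\rm HR}$ defining $\widetilde{\mathcal M}$ in (\ref{equ:newmdp}). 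A minor technical caveat is that the auxiliary coordinate $y$ ranges over $\mathbb R$; since the one-step rewards are bounded and the horizon is finite, the values of $y_t$ reachable from a fixed $y_0$ lie in a bounded set, and the finiteness of each $\mathcal A(s)$ guarantees the maximizer $a_t^*(s,y)$ exists, so the attendant measurability and boundedness concerns are routine.
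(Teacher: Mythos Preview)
Your proposal is correct and follows essentially the same route as the paper: treat $\widetilde{\mathcal M}$ as a standard finite-horizon expected-reward MDP so that (\ref{equ:opt_dp}) and part~(a) are classical dynamic-programming facts, and then deduce part~(b) from part~(a) together with Theorem~\ref{thm:relation}. The paper simply cites Theorems~4.3.2 and 4.3.3 of Puterman for the first step rather than writing out the backward induction you sketch, and states part~(b) as a direct corollary of part~(a) and Theorem~\ref{thm:relation}; your argument is a faithful unpacking of that citation, including the reduction from $\tilde{\mathcal U}^{\rm HR}$ to $\tilde{\mathcal U}^{\rm MR}$ that Puterman's results subsume.
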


Therefore, with Theorems~\ref{thm:relation} and \ref{thm:opt}, the
inner pseudo MV-MDP $\hat{\mathcal M}(y_0)$ in (\ref{equ:inner}) can
be solved by executing dynamic programming (\ref{equ:opt_dp}) with
$\hat J_0^* = V_0^*$, and the optimal policy of $\hat{\mathcal
M}(y_0)$ can be determined by that of $\widetilde{\mathcal M}$ in
(\ref{equ:newmdp}), as stated by part~(b) above. Furthermore, after
$\hat J_0^*(s_0,y_0)$ is obtained, the original problem MV-MDP
$\mathcal M$ in (\ref{equ:mv_mdp}) can be solved by the following
single parameter optimization problem
\begin{equation}\label{equ:outer}
    J_0^*(s_0) = \max\limits_{y_0 \in \mathbb{R}}\hat J_0^*(s_0,y_0),\quad s_0 \in \mathcal S.
\end{equation}
We derive Theorem~\ref{thm:opt_policy} to establish the optimal
policy for the MV-MDP $\mathcal M$ in (\ref{equ:mv_mdp}).
\begin{thm}\label{thm:opt_policy}
Suppose $y^*_0$ attains the maximum of (\ref{equ:outer}) and
$\tilde{u}^*=(\tilde{u}^*_t;t \in \mathcal T) \in \tilde{\mathcal
U}^{\rm {MD}}$ is an optimal policy for the standard MDP
$\widetilde{\mathcal M}$ in (\ref{equ:newmdp}), then the
history-dependent deterministic policy ${u}^{*}=({u}^{*}_t;t \in \mathcal T)\in
\mathcal U^{\rm {HD}}$ with
$u^{*}_{t}(s_0,a_0,\ldots,s_t):=\tilde{u}_t^*(s_t,y^*_0-\sum\limits_{\tau=0}^{t-1}r_\tau(s_\tau,a_\tau))$
is optimal for the MV-MDP $\mathcal M$ in (\ref{equ:mv_mdp}).
\end{thm}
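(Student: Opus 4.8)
The plan is to prove optimality of $u^*$ by a short chaining argument that combines the pointwise relation between pseudo and true mean-variance from (\ref{eq_pseudovar}), Theorem~\ref{thm:opt}(b), and the defining property of $y_0^*$. The starting observation is that for any fixed policy $u \in \mathcal U^{\rm HR}$ and any pseudo mean $y_0 \in \mathbb{R}$, applying (\ref{eq_pseudovar}) to the random variable $R_{0:T}$ under $\mathbbm{P}_{s_0}^{u}$ gives
\begin{equation*}
\hat J_0^{u}(s_0, y_0) = \mu_0^{u}(s_0) - \lambda \hat\sigma(R_{0:T}, y_0) \le \mu_0^{u}(s_0) - \lambda \sigma_0^{u}(s_0) = J_0^{u}(s_0),
\end{equation*}
with equality precisely when $y_0 = \mu_0^{u}(s_0)$; equivalently, $J_0^{u}(s_0) = \max_{y_0 \in \mathbb{R}} \hat J_0^{u}(s_0, y_0)$. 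This pointwise inequality is the only analytic ingredient beyond the two preceding theorems.

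Next I would assemble the achievability bound. Since $u^*$ is, by construction, exactly the policy $\hat u^*$ produced in Theorem~\ref{thm:opt}(b) for the particular pseudo mean $y_0^*$, that theorem gives $\hat J_0^{u^*}(s_0, y_0^*) = \hat J_0^*(s_0, y_0^*)$, i.e., $u^*$ is optimal for the inner problem $\hat{\mathcal M}(y_0^*)$. Because $y_0^*$ attains the maximum in the outer problem (\ref{equ:outer}), we also have $\hat J_0^*(s_0, y_0^*) = J_0^*(s_0)$. Chaining these with the pointwise inequality above yields
\begin{equation*}
J_0^*(s_0) = \hat J_0^*(s_0, y_0^*) = \hat J_0^{u^*}(s_0, y_0^*) \le J_0^{u^*}(s_0).
\end{equation*}

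Finally, the reverse inequality $J_0^{u^*}(s_0) \le J_0^*(s_0)$ holds trivially since $u^* \in \mathcal U^{\rm HD} \subset \mathcal U^{\rm HR}$ and $J_0^*(s_0)$ is the supremum over $\mathcal U^{\rm HR}$ in (\ref{equ:mv_mdp}). Combining the two directions forces equality throughout, so $J_0^{u^*}(s_0) = J_0^*(s_0)$ for every $s_0$, which is exactly the claimed optimality of $u^*$. I do not expect a serious obstacle here: all the heavy lifting (the state augmentation, the dynamic-programming solvability of $\widetilde{\mathcal M}$, and the transfer of the augmented-MDP optimal policy back to a history-dependent policy for the inner problem) has already been done in Theorems~\ref{thm:relation} and \ref{thm:opt}. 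The one point requiring care is bookkeeping: I must verify that $u^*$ as defined is literally the same history-dependent policy as $\hat u^*$ in Theorem~\ref{thm:opt}(b) with $y_0 = y_0^*$, so that the inner-optimality conclusion applies verbatim, and that the pointwise inequality is invoked for this specific $u^*$ rather than assuming $y_0^* = \mu_0^{u^*}(s_0)$ a priori---the latter in fact emerges as a consequence of equality holding throughout the chain.
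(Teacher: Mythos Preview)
Your proposal is correct and follows essentially the same chaining argument as the paper: both establish $J_0^{u^*}(s_0)\ge \hat J_0^{u^*}(s_0,y_0^*)=\hat J_0^*(s_0,y_0^*)=J_0^*(s_0)$ via the pseudo-variance inequality, inner optimality, and the defining property of $y_0^*$, with the reverse inequality trivial. The only cosmetic difference is that the paper unpacks the middle equality through Theorem~\ref{thm:relation} and the optimality of $\tilde u^*$ for $\widetilde{\mathcal M}$, whereas you invoke Theorem~\ref{thm:opt}(b) directly---these are equivalent since Theorem~\ref{thm:opt}(b) is itself derived from Theorem~\ref{thm:relation}.
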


\begin{remark}\rm{
(i) Theorem~\ref{thm:opt_policy} implies that the optimum of the
finite-horizon MV-MDP $\mathcal M$ in (\ref{equ:mv_mdp}) can be
attained by a \textit{history-dependent deterministic} policy in
$\mathcal{U}^{\rm HD}$, which is not Markovian since
$y_t:=y^*_0-\sum\limits_{\tau=0}^{t-1}r_\tau(s_\tau,a_\tau)$ relies
on the history rewards up to time $t$. Therefore, we cannot limit
our policy space to $\mathcal{U}^{\rm MD}$, which is different from
the ordinary MDPs \citep{Puterman94} or the long-run MV-MDPs where
Markov deterministic policies are  able to attain optimum
\citep{xia2016,xia2020}. Moreover, $\sup$ in all the previous
contents can be replaced by $\max$.

(ii) In the above MV-MDPs, the state and action spaces are supposed
to be discrete and finite. %Section~\ref{sec:opt} establishes the
%equivalence between the finite-horizon MV-MDPs and the
%finite-horizon standard MDPs with the help of state augment.
Furthermore, all the results in Section~\ref{sec:opt} can be
parallel extended to continuous state and action spaces by replacing
transition probability function with \emph{transition density
function} and adding the so-called \emph{measurable selection
condition} (for example, the compactness assumption on action space
and the continuity assumption on transition function and reward
function) to ensure the existence of an optimal deterministic policy
as that in finite-horizon standard MDPs (see Chapter~3 of
\cite{Hernandez-Lerma1996} for instance).

(iii) In many applications, such as portfolio selection and
inventory management, system stochasticity is captured by a random
variable $\xi_t$ and the evolution of states is specified by a
difference equation $s_{t+1}=f_t(s_t,a_t,\xi_t)$, which is commonly
adopted in stochastic control. Such kind of models can be viewed as
a special case of our MDP model (see Chapter 2 of
\cite{Hernandez-Lerma1996} for instance), and our main results can
be extended to these stochastic control models, as discussed later
in Sections $\ref{sec:alg}$ and $\ref{sec:exp}$.}
\end{remark}

\section{Algorithm}\label{sec:alg}

With the main results in Section~\ref{sec:opt}, the original
finite-horizon MV-MDP in (\ref{equ:mv_mdp}) is converted to a
bilevel MDP as follows.
\begin{equation}\label{equ:outer2}
J_0^*(s_0) = \max\limits_{u \in {\mathcal U}^{\rm {HR}}} J_0^{u}(s_0)
= \max\limits_{y_0 \in \mathbb{R}}\max\limits_{u \in {\mathcal
U}^{\rm {HD}}}\hat {J}_0^{u}(s_0,y_0),\quad s_0 \in \mathcal
S.
\end{equation}
Although the inner problem is equivalent to a standard MDP with
augmented state, enumerating every possible $y_0 \in \mathbb R$ and
solving the associated inner problem is computationally intractable.
In this section, we aim to develop an efficient algorithm to solve
(\ref{equ:outer2}).

It is worth noting that the maximum of the outer level optimization
problem (\ref{equ:outer2}) is attained at $y^*_0=\mu_0^{u^*}(s_0)$,
i.e., if optimal policy is given as $u^* \in \mathcal U^{\rm {HD}}$,
$y^*_0$ in (\ref{equ:outer2}) equals the mean reward
$\mu_0^{u^*}(s_0)$ of this MDP with policy $u^*$. Thus, we can
restrict $y_0$ to a much smaller domain $\left\{\mu_0^u(s_0): u \in
\mathcal U^{\rm HD}\right\} \subset [T\underline r , T\bar r] =:
\mathcal{Y}$. Therefore, the bilevel MDP (\ref{equ:outer2}) can be
rewritten as
\begin{equation}\label{equ:bilev2}
        J_0^*(s_0) = \max_{y_0 \in \mathcal Y}\max_{u \in \mathcal U^{\rm HD}}\hat J_0^u(s_0,y_0), \quad s_0 \in \mathcal S.
\end{equation}

Although the domain of $y_0$ is reduced from $\mathbb R$ to a
bounded space $\mathcal Y$, the computation of solving
(\ref{equ:bilev2}) is inefficient yet. To resolve this challenge, we
need to further study the property of the bilevel MDP (\ref{equ:bilev2}). We find that $\mathcal Y$ can be divided
into finitely many intervals, where in each interval $\mathcal Y_i
\subset \mathcal Y$, the inner pseudo MV-MDPs $\left\{\hat{\mathcal
M}(y_0); y_0 \in \mathcal Y_i\right\}$ in (\ref{equ:inner}) can
retain the same optimal policy, as stated in
Theorem~\ref{thm:divid}.

%Since $\mathcal{Y}$ is bounded, we can use discretization to
%approximately solve (\ref{equ:bilev2}). Specifically, we first
%discretize  $\mathcal{Y}$ to a discrete space $\hat{\mathcal{Y}}$.
%Then, we solve a series of inner pseudo MV-MDPs $\hat{\mathcal
%M}(y_0)$ for each $y_0 \in \hat{\mathcal Y}$ and obtain the
%corresponding optimal policy $\hat u^*$ and optimal value function
%$\hat{J}_0^{\hat u^*}(s_0,y_0)$. Finally, we choose the one with the
%maximal pseudo mean-variance, i.e., $u^* \in \argmax\limits_{\hat
%u^*}\hat{J}_0^{\hat u^*}(s_0,y_0)$, which approximates the optimal
%policy of the original MV-MDP (\ref{equ:mv_mdp}).
%
%This discretization method is theoretically feasible but lacks
%computational efficiency, which is difficult to apply in practice
%especially for large scale state space $\mathcal{S}$ and long
%horizon $T$. It is desirable to develop efficient algorithms to
%solve the finite-horizon MV-MDP (\ref{equ:mv_mdp}).

\begin{thm}\label{thm:divid}
Given $s_0 \in \mathcal S$, there exist a sequence
$\left\{y^0,y^1,\ldots,y^n,y^{n+1}\right\}$ with $\underline
r=y^0<y^1<\ldots<y^n<y^{n+1}=\bar r$ and a sequence of deterministic
policies $\left\{\hat u^0_*,\hat u^1_*,\ldots,\hat u^n_*\right\}$
such that
\begin{equation}\nonumber
\hat J_0^{*}(s_0,y_0) = \hat J_0^{\hat u^k_*}(s_0,y_0), \quad\forall
y_0 \in [y^k,y^{k+1}],
\end{equation}
for a given $k \in \left\{0,1,\ldots,n\right\}$.
\end{thm}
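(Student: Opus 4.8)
The plan is to reduce the statement to an elementary fact about the upper envelope of finitely many parabolas that share a common leading coefficient. The two ingredients I would use are: (i) for a \emph{fixed} policy the pseudo mean-variance $\hat J_0^u(s_0,\cdot)$ is a quadratic in $y_0$ whose $y_0^2$-coefficient equals $-\lambda$ \emph{independently of the policy}; and (ii) the inner optimum is attained within the \emph{finite} set $\mathcal U^{\rm HD}$. Concretely, by Theorem~\ref{thm:opt}(b) together with the remark following Theorem~\ref{thm:opt_policy}, for each $y_0$ the supremum defining $\hat J_0^*(s_0,y_0)$ in (\ref{equ:inner}) is attained by a history-dependent deterministic policy, so that
\[
\hat J_0^*(s_0,y_0)=\max_{u\in\mathcal U^{\rm HD}}\hat J_0^u(s_0,y_0).
\]
Since $\mathcal S$, $\mathcal A$ and the horizon $T$ are finite, the set of admissible histories is finite and hence $\mathcal U^{\rm HD}$ is a finite set.

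Next I would expand each summand explicitly. From the definition (\ref{eq_pseudoMV}),
\[
\hat J_0^u(s_0,y_0)=-\lambda y_0^{2}+2\lambda\mu^u_0(s_0)\,y_0+\big(\mu^u_0(s_0)-\lambda\,\mathbbm E_{s_0}^{u}[R_{0:T}^{2}]\big),
\]
which is a concave parabola in $y_0$ whose leading coefficient $-\lambda$ does not depend on $u$; only the linear and constant coefficients carry the dependence on $u$ (through $\mu^u_0(s_0)$ and the second moment $\mathbbm E_{s_0}^{u}[R_{0:T}^{2}]$). Thus $\hat J_0^*(s_0,\cdot)$ is the pointwise maximum of finitely many parabolas with identical quadratic part.

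The key structural step is that, because the quadratic terms coincide, the difference $\hat J_0^u(s_0,y_0)-\hat J_0^{u'}(s_0,y_0)$ is \emph{affine} in $y_0$ for every pair $u,u'\in\mathcal U^{\rm HD}$. Hence any two of these parabolas cross at most once, the comparison set $\{y_0:\hat J_0^u(s_0,y_0)\ge\hat J_0^{u'}(s_0,y_0)\}$ is a half-line (or all of $\mathbb R$, or empty), and the set of $y_0$ on which a fixed $u$ attains the maximum is a finite intersection of such half-lines, i.e. an interval. As $\mathcal U^{\rm HD}$ is finite, these maximizing intervals cover $\mathcal Y$ and overlap only at their endpoints; collecting the endpoints as $y^0<y^1<\cdots<y^{n+1}$ and selecting on each $[y^k,y^{k+1}]$ one policy $\hat u_*^k$ that realizes the envelope there yields exactly the asserted sequences.

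The main obstacle is precisely ingredient~(i): the whole argument hinges on the leading coefficient being the policy-independent constant $-\lambda$. Without it, two parabolas with distinct leading coefficients could cross twice, the active region of a policy need not be connected, and the partition into intervals would break down. I would therefore be most careful when verifying the expansion above, and when handling ties at the breakpoints $y^k$, where two neighbouring policies attain the same value; this is harmless, since the theorem only asserts a piecewise-optimal policy on the \emph{closed} intervals, so any one of the tied maximizers may be taken as $\hat u_*^k$.
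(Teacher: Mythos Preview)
Your proposal is correct and takes a genuinely different route from the paper. The paper argues by contradiction: it supposes there is an interval on which no sub-interval admits a common inner-optimal policy, then builds nested sequences $\{y_1^n\}\uparrow$ and $\{y_2^n\}\downarrow$ with distinct optimal policies, extracts subsequences (by finiteness of $\mathcal U^{\rm HD}$) along which a fixed policy is optimal, and uses continuity of $\hat J_0^u(s_0,\cdot)$ and $\hat J_0^*(s_0,\cdot)$ to reach a contradiction. Your argument is instead direct and structural: you exploit the explicit expansion $\hat J_0^u(s_0,y_0)=-\lambda y_0^2+2\lambda\mu_0^u(s_0)y_0+\text{const}(u)$ to see that the $y_0^2$-coefficient is policy-independent, whence pairwise differences are affine and each policy's maximizing region is an interval. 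This is more elementary and more informative, since it simultaneously delivers the \emph{piecewise quadratic} form that the paper establishes separately in Theorem~\ref{thm:piece_conca}; indeed, the paper invokes exactly your expansion only later, in that subsequent proof. The paper's contradiction argument, by contrast, uses only continuity and finiteness and would survive in settings where $\hat J_0^u(s_0,\cdot)$ is merely continuous rather than quadratic with a common leading term, so it is the more portable of the two.
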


%\begin{proof}
%The proof is presented in Appendix \ref{app:thmdivid}.
%\end{proof}

Based on Theorem~\ref{thm:divid}, we give the definition of
\emph{break points}, which play an important role in our
algorithm.

\begin{definition}\label{def:cri}
We call $y^c \in \mathcal Y$ a break point if there exist
$y_1,y_2$ with $y_1<y^c<y_2$ such that the pseudo MV-MDPs
$\left\{\hat{\mathcal M}(y); y \in [y_1,y^c]\right\}$ have the same
optimal policy, while this policy is not optimal for pseudo MV-MDPs
$\left\{\hat{\mathcal M}(y); y \in (y^c,y_2]\right\}$.
\end{definition}

Without loss of generality, we assume that
$\left\{y^1,\ldots,y^n\right\}$ is the set of all break points.
As a consequence of Theorem~\ref{thm:divid}, we prove that the
optimal value function $\hat J_0^*(s_0,y_0)$  of the pseudo MV-MDP
(\ref{equ:inner}) is divided into quadratic concave segments by
break points, as stated in Theorem~\ref{thm:piece_conca} and
illustrated in Figure~\ref{fig:pie_conca}.

\begin{thm}\label{thm:piece_conca}
Given $s_0 \in \mathcal S$, the optimal value function $\hat
J_0^*(s_0,y_0)$ is piecewise quadratic concave with respect to
$y_0$, and it is divided into quadratic concave segments by break
points.
\end{thm}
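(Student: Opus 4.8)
The plan is to reduce the statement to a single elementary observation—that for a fixed policy the pseudo mean-variance value is a concave quadratic in the pseudo mean $y_0$—and then to patch these quadratics together using Theorem~\ref{thm:divid}. Indeed, Theorem~\ref{thm:divid} already supplies a finite partition of the domain into intervals $[y^k,y^{k+1}]$ on each of which a single deterministic policy $\hat u^k_*$ attains the inner optimum, so that $\hat J_0^*(s_0,\cdot)$ coincides there with the fixed-policy value $\hat J_0^{\hat u^k_*}(s_0,\cdot)$. The whole task therefore reduces to understanding how $\hat J_0^u(s_0,y_0)$ depends on $y_0$ for a fixed $u$.

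First I would carry out the key expansion. Writing $\mu := \mathbbm{E}_{s_0}^{u}[R_{0:T}]$ and $M := \mathbbm{E}_{s_0}^{u}[R_{0:T}^2]$, definition (\ref{eq_pseudoMV}) gives
\begin{equation*}
\hat J_0^u(s_0,y_0) = \mu - \lambda\,\mathbbm{E}_{s_0}^{u}\big[(R_{0:T}-y_0)^2\big] = -\lambda y_0^2 + 2\lambda\mu\, y_0 + (\mu - \lambda M).
\end{equation*}
Hence, for each fixed $u$, the map $y_0 \mapsto \hat J_0^u(s_0,y_0)$ is a quadratic polynomial with leading coefficient $-\lambda \le 0$. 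When $\lambda>0$ this is a strictly concave downward parabola whose vertex lies at $y_0=\mu$—precisely where the pseudo variance equals the true variance and the value equals $\mu-\lambda\sigma^u_0(s_0)$—while for $\lambda=0$ it degenerates to the constant $\mu$. In either case $\hat J_0^u(s_0,\cdot)$ is quadratic and concave.

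Combining this with Theorem~\ref{thm:divid}, on each interval $[y^k,y^{k+1}]$ we have $\hat J_0^*(s_0,y_0)=\hat J_0^{\hat u^k_*}(s_0,y_0)$, which by the expansion above is a concave quadratic with curvature $-\lambda$. Since the break points $\{y^1,\ldots,y^n\}$ are, by the standing assumption, exactly the interior endpoints of these intervals, the function $\hat J_0^*(s_0,\cdot)$ is partitioned by the break points into finitely many concave quadratic segments, which is the asserted piecewise quadratic concavity. Continuity of $\hat J_0^*(s_0,\cdot)$ across a break point follows because the two adjacent quadratics agree there.

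The genuinely nontrivial content has in fact already been absorbed into Theorem~\ref{thm:divid}: the hard point is that a single deterministic policy stays optimal across an entire subinterval of pseudo means, so that the supremum over policies does not switch quadratics within a segment. Once that finite partition is granted, the only thing worth flagging is that the global function is \emph{not} concave—at a break point two parabolas of identical curvature $-\lambda$ but distinct vertices cross, and their upper envelope develops an upward kink—so the claim is concavity segmentwise rather than globally. No estimate beyond the quadratic expansion above is needed.
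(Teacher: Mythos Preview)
Your proposal is correct and follows essentially the same approach as the paper's own proof: expand $\hat J_0^u(s_0,y_0)$ as a quadratic in $y_0$ with leading coefficient $-\lambda$, then invoke Theorem~\ref{thm:divid} to conclude that $\hat J_0^*(s_0,\cdot)$ agrees with a single such quadratic on each subinterval between break points. Your additional remarks on continuity at break points and the upward kink (hence failure of global concavity) are correct extras not spelled out in the paper.
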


\begin{figure}[htbp]
\centering
\includegraphics[width=0.65\columnwidth]{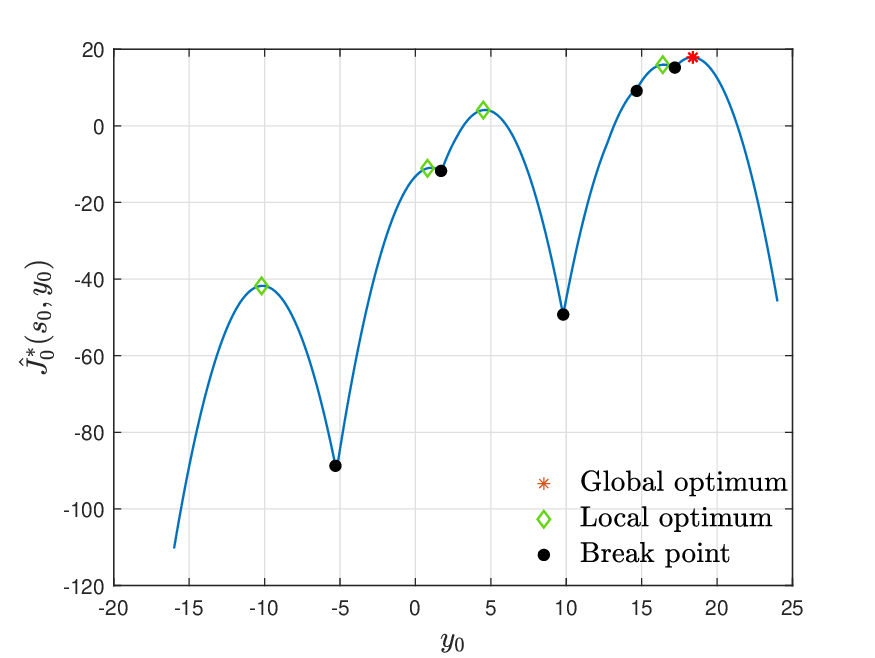}
\caption{The piecewise quadratic concave structure of optimal value
function $\hat J_0^{*}(s_0,y_0)$.} \label{fig:pie_conca}
\end{figure}

Theorem~\ref{thm:piece_conca} implies that the outer optimization
problem (\ref{equ:outer}) is not a convex optimization problem,
there may exist multiple local optima. In what follows, we develop a
policy iteration type algorithm to efficiently find a local optimum
of (\ref{equ:outer}), which also attains a locally optimal policy
for the original finite-horizon MV-MDP (\ref{equ:mv_mdp}). The basic
idea is that we solve the bilevel MDP
(\ref{equ:bilev2}), i.e., $J^*_0(s_0) = \max\limits_{y_0 \in
\mathcal Y}\max\limits_{u \in \mathcal U^{\rm HD}}\hat
J_0^u(s_0,y_0)$, by alternatingly maximizing between pseudo mean
$y_0$ and policy $u$. For a given policy $u$, we can attain the
maximum of the outer level problem by setting
$y_0=\mathbbm{E}_{s_0}^u[R_{0:T}]$. Then we fix this $y_0$ and optimize
the inner pseudo MV-MDP $\hat{\mathcal M}(y_0)$ to derive a new
policy $u'$, i.e.,
\begin{equation}
    u' \in \argmax_{u \in \mathcal U^{\rm HD}} \hat J_0^u(s_0,y_0).
\end{equation}
We can repeat this procedure by updating $y_0$ using this new policy
$u'$. We prove that such procedure can strictly improve the
mean-variance value function $J^{u}_0(s_0)$. We can observe that
this procedure usually converges fast and the performance
improvement of the first few iterations is significant, which is
similar to the policy iteration in classical MDPs.
When the iteration procedure stops at a pair $(u^*, y^*_0)$, it must
satisfy \textit{fixed point equations}
\begin{equation}\label{eq_fixedpoint}
\begin{array}{rcl}
   y^*_0 &=& \mathbbm{E}_{s_0}^{u^*}[R_{0:T}],\\
   u^* &\in& \argmax\limits_{u \in \mathcal {U}^{\rm HD}} \hat J_0^u(s_0,y^*_0).
\end{array}
\end{equation}
We can prove that such policy $u^*$ is locally optimal in a mixed
policy space specified. Moreover, we can further improve these
converged policies when the associated $y^*_0$ coincides with
break points, and these refined pseudo means are also locally
optimal in the space of $\mathcal Y$ as shown in
Figure~\ref{fig:pie_conca}.
 The detailed procedure is described in
Algorithm~\ref{alg:local} and the flowchat of the algorithm is
illustrated by Figure~\ref{fig:algo1}.

\begin{algorithm}
    \caption{An iterative algorithm to find local optima of finite-horizon MV-MDPs}\label{alg:local}
    \begin{algorithmic}[1]
        \Require {MDP parameters $\mathcal{M}
            = \langle \mathcal T, \mathcal S, \mathcal A, (\mathcal{A}(s) \subset \mathcal A, s \in \mathcal{S}),
            (P_t, t \in \mathcal T), (r_t, t \in \mathcal T)  \rangle$}

        \Ensure {A locally optimal policy $u^*$}

        \State \textit{Initialization}:
        Arbitrarily choose a policy $u^{(0)} \in \mathcal U^{\rm HD}$, $k \gets 0$.

        \While{$u^{(k)} \neq u^{(k-1)}$}

        \State \textit{Policy Evaluation}: For the initial state $s_0 \in \mathcal S$, compute the pseudo mean
        \begin{equation}
            \nonumber
            y^{(k)}_0 =  \mathbbm{E}^{u^{(k)}}_{s_0}[R_{0:T}].
        \end{equation}

        \State \textit{Policy Improvement}: Solve the pseudo MV-MDP $\hat{\mathcal M}(y^{(k)}_0)$ in (\ref{equ:inner}), or equivalently the augmented MDP $\widetilde{\mathcal M}$ in (\ref{equ:newmdp}) with initial state $(s_0,y^{(k)}_0)$ by using dynamic programming (\ref{equ:opt_dp}), and obtain the inner optimal policy
        $\tilde u^*=(\tilde u^*_t;t \in \mathcal T) \in \tilde{\mathcal U}^{\rm MD}$. Generate a new policy $u'=(u'_t; t \in \mathcal T) \in \mathcal U^{\rm HD}$ by Theorem~\ref{thm:opt}
        \begin{equation}\label{equ:gene}
            u'_t(s_0,a_0,\ldots,s_t)=\tilde{u}_t^*(s_t, y^{(k)}_0-\sum\limits_{\tau=0}^{t-1}r_\tau(s_\tau,a_\tau)).
        \end{equation}
        \indent Keep $u'=u^{(k)}$ if possible, to avoid policy oscillations.

        \State \textit{Parameters Update}: $u^{(k+1)} \gets u'$, $k \gets
        k+1$.

        \EndWhile

        \If{$y^{(k)}_0$ is a break point}
        \State Go to line~4 (Policy Improvement).
        Choose a new inner optimal policy $\tilde u^{*'} \neq \tilde u^*$ and generate a policy $u{''}$ with $\tilde u^{*'}$ in lieu of $\tilde u^*$ in (\ref{equ:gene}) such that $\hat J^{u{''}}_0(s_0,\mu^{u{''}}_0(s_0))>\hat J^{u'}_0(s_0,\mu^{u'}_0(s_0))$.
        \State $u^{(k+1)} \gets u{''}$, $k \gets k+1$, and go to line~3 (Policy Evaluation).
        \EndIf

        \State \textbf{return}  $u^{(k)}$ %$u, y=(y(s_0), s_0 \in \mathcal S)$
        %   \EndProcedure
    \end{algorithmic}
\end{algorithm}
From Figure~\ref{fig:algo1}, we can see that the pseudo mean
$y^{(k)}_0$ and the policy $u^{(k)}$ are updated alternatingly.
Next, we will show that the sequence of $\left\{(u^{(k)}, y^{(k)}_0);k\ge 0\right\}$ will converge to a fixed point solution $(u^*,y^*_0)$
to (\ref{eq_fixedpoint}), and the associated sequence of
mean-variance value $\left\{J^{u^{(k)}}_0(s_0); k\ge 0\right\}$ is
monotonically increasing.
To further characterize the local
optimality of the converged pseudo mean $y_0^*$ and policy $u^*$, we
will show that the associated $\hat{J}_0^*(s_0,y_0)$ attains the
local optimum at $y_0^*$ in the pseudo mean space $\mathcal Y$ and
the associated $J^{u}_0(s_0)$ attains the local optimum at $u^*$ in
the sense of a well-specified policy space.
%The converged policy
%$u^*$ is locally optimal, i.e., the associated $J^{u}_0(s_0)$
%attains local optima at each solution $u^*$ in the sense of a
%well-specified solution space.
\begin{figure}[htbp]
    \centering
    \includegraphics[width=1\columnwidth]{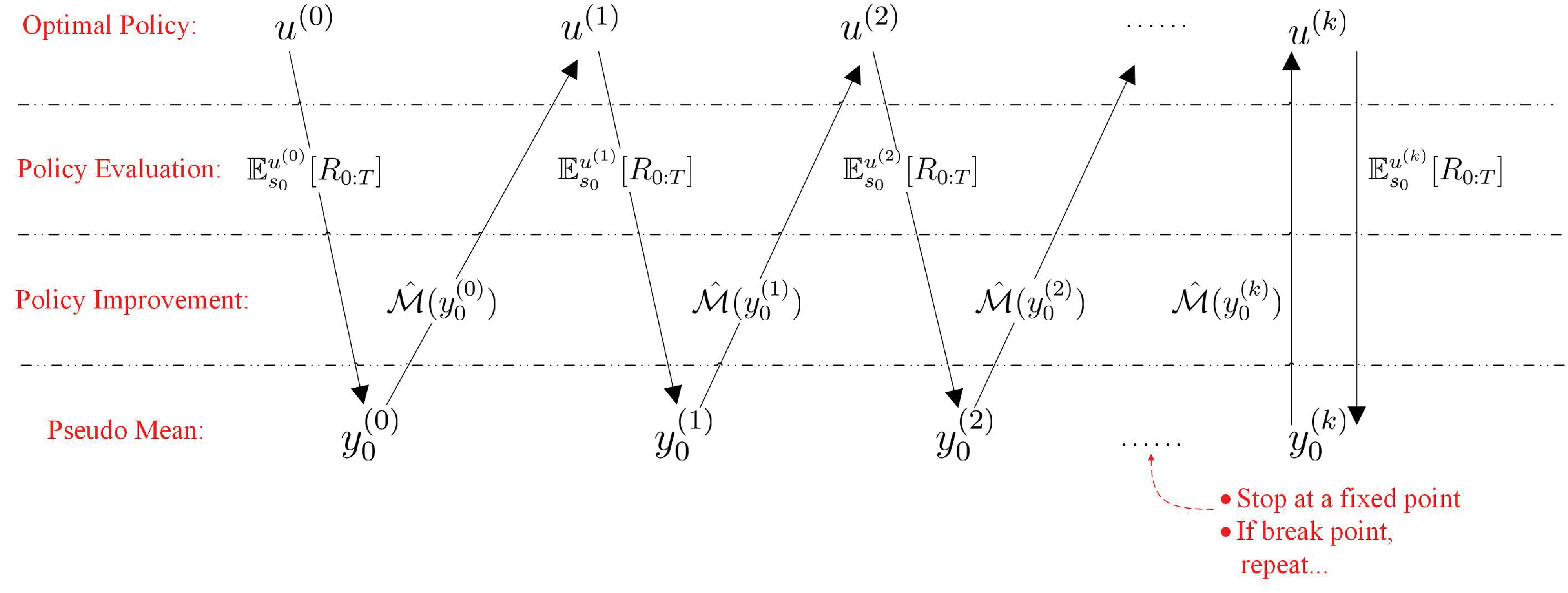}
    \caption{Flowchat illustration of Algorithm~\ref{alg:local}.}
    \label{fig:algo1}
\end{figure}

First, we introduce the concept of mixed policy. For any two
deterministic policies $u, u' \in \mathcal U^{\rm HD}$ and a constant
$\delta \in [0,1]$, we define $\delta_u^{u'}:=(1-\delta) u + \delta
u'$ as a mixed policy between $u$ and $u'$, which adopts policy $u$
with probability $1-\delta$ and adopts policy $u'$ with probability
$\delta$. We denote by $\mathcal{U}^{\rm MIX}$ the space of all mixed
policies. Then, we give the definition of the so-called \emph{valid
pruned deterministic policy space} as follows.

\begin{definition}\label{def:valid_policy}
We call a policy space $\mathcal{U}_{\rm valid}^{\rm HD} \subseteq
\mathcal{U}^{\rm HD}$ a valid pruned deterministic policy space, if
the optimal policy of the finite-horizon MV-MDP (\ref{equ:mv_mdp})
can be obtained in $\mathcal{U}_{\rm valid}^{\rm HD}$.
\end{definition}

Next, we give the definition of \emph{local optimum} in a mixed
policy space as below.

\begin{definition}\label{def:local_opt}
Suppose $\mathcal{U}_{\rm valid}^{\rm HD}$ is a valid pruned
deterministic policy space, we call a deterministic policy $u \in
\mathcal{U}_{\rm valid}^{\rm HD}$ locally optimal in the mixed policy
space generated by $\mathcal{U}_{\rm valid}^{\rm HD}$, if there
exists a constant $\epsilon>0$ such that
\begin{equation*}
J_0^{u}(s_0) \ge J_0^{\delta_{u}^{u'}}(s_0), \quad\forall \delta \in
(0,\epsilon), u' \in \mathcal{U}_{\rm valid}^{\rm HD}, s_0 \in
\mathcal S.
\end{equation*}
Further, if the inequality is strict, $u$ is called a strictly local
optimum in the mixed policy space generated by $\mathcal{U}_{\rm
valid}^{\rm HD}$.
\end{definition}

With the above definition of local optimum, the convergence of
Algorithm~\ref{alg:local} is guaranteed by the following theorem.

\begin{thm}\label{thm:converge}
Algorithm~\ref{alg:local} converges to a fixed point solution
$(u^*,y^*_0)$ to (\ref{eq_fixedpoint}). Furthermore,
\begin{itemize}
\item[(i)] The policy space defined by
\begin{equation}\label{eq_barU}
\mathcal{U}_{\rm valid}^{\rm HD}(u^*) := \left\{u \in \mathcal{U}^{\rm
HD}: \hat {J}^{u}_0(s_0,y^*_0) \neq \hat {J}^{u^*}_0(s_0,y^*_0), ~
\exists s_0 \in \mathcal S\right\} \cup \left\{u^*\right\}
\end{equation}
is a valid pruned deterministic policy space.
\item[(ii)] Algorithm \ref{alg:local} converges to a strictly local optimum $u^*$ in the mixed policy
space generated by $\mathcal{U}_{\rm valid}^{\rm HD}(u^*)$ for the
finite-horizon MV-MDP (\ref{equ:mv_mdp}) with value function
$J^{u}_0(s_0)$, $\forall u \in~\left\{(1-\delta)u'+\delta
u'':~u',u'' \in \mathcal{U}_{\rm valid}^{\rm HD}(u^*),\delta \in
[0,1]\right\}$.
\item[(iii)] Algorithm~\ref{alg:local} converges to a local optimum $y^*_0$ in the real space for the pseudo MV-MDP (\ref{equ:inner}) with optimal value function $\hat J^*_0(s_0,y_0)$, $\forall y_0 \in \mathcal Y$.
\end{itemize}
\end{thm}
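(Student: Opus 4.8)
The entire argument is driven by one elementary identity. Writing the pseudo variance as $\mathbbm{E}^u_{s_0}[(R_{0:T}-y_0)^2]=\sigma^u_0(s_0)+(\mu^u_0(s_0)-y_0)^2$, definition \eqref{eq_pseudoMV} of $\hat J^u_0$ becomes
\begin{equation}\label{eq_key_identity}
\hat J^u_0(s_0,y_0)=J^u_0(s_0)-\lambda\big(\mu^u_0(s_0)-y_0\big)^2,\qquad u\in\mathcal U^{\rm HR},\ y_0\in\mathbb R,
\end{equation}
so each $\hat J^u_0(s_0,\cdot)$ is a downward parabola with apex $J^u_0(s_0)$ located at $y_0=\mu^u_0(s_0)$, and $J^u_0(s_0)=\max_{y_0}\hat J^u_0(s_0,y_0)$. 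Using \eqref{eq_key_identity} I would first establish monotone improvement: the evaluation step sets $y^{(k)}_0=\mu^{u^{(k)}}_0(s_0)$ so that $J^{u^{(k)}}_0(s_0)=\hat J^{u^{(k)}}_0(s_0,y^{(k)}_0)$; the improvement step picks $u^{(k+1)}\in\argmax_u\hat J^u_0(s_0,y^{(k)}_0)$, whence $\hat J^{u^{(k+1)}}_0(s_0,y^{(k)}_0)\ge\hat J^{u^{(k)}}_0(s_0,y^{(k)}_0)$; maximizing over $y_0$ then gives $J^{u^{(k+1)}}_0(s_0)\ge J^{u^{(k)}}_0(s_0)$. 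Since $\mathcal U^{\rm HD}$ is finite (histories and actions are finite), the nondecreasing sequence $\{J^{u^{(k)}}_0(s_0)\}$ stabilizes, the anti-oscillation rule forces the policy to stabilize, and each break-point continuation (lines~7--10 of Algorithm~\ref{alg:local}) strictly raises $J_0$ so only finitely many occur; the limit $(u^*,y^*_0)$ then satisfies \eqref{eq_fixedpoint} by construction.

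For part~(i) I would argue by the dichotomy that \eqref{eq_key_identity} forces on any global optimizer $u^{\rm opt}$ of \eqref{equ:mv_mdp}. If $u^{\rm opt}\in\mathcal U^{\rm HD}_{\rm valid}(u^*)$ there is nothing to prove, so suppose not; by \eqref{eq_barU} this means $u^{\rm opt}\neq u^*$ and $\hat J^{u^{\rm opt}}_0(\tilde s_0,y^*_0)=\hat J^{u^*}_0(\tilde s_0,y^*_0)$ for every $\tilde s_0$, i.e.\ $u^{\rm opt}$ is also inner-optimal for $\hat{\mathcal M}(y^*_0)$. Evaluating \eqref{eq_key_identity} at $y^*_0=\mu^{u^*}_0(s_0)$ yields $J^{u^{\rm opt}}_0(s_0)=J^{u^*}_0(s_0)+\lambda(\mu^{u^{\rm opt}}_0(s_0)-y^*_0)^2$. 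If $\mu^{u^{\rm opt}}_0(s_0)=y^*_0$, then $u^*$ is itself globally optimal and lies in $\mathcal U^{\rm HD}_{\rm valid}(u^*)$. Otherwise two policies are inner-optimal at $y^*_0$ with distinct means; their parabolas from \eqref{eq_key_identity} cross at $y^*_0$, so the inner optimizer changes across $y^*_0$ and $y^*_0$ is a break point in the sense of Definition~\ref{def:cri} --- but then the refinement step would have produced a strictly improving $u''$, contradicting termination. Hence a global optimizer always lies in $\mathcal U^{\rm HD}_{\rm valid}(u^*)$.

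For part~(ii) I would evaluate a mixed policy in closed form. Because $\delta^{u'}_{u^*}$ commits to either $u^*$ or $u'$ over the whole horizon, every expectation is affine in $\delta$, giving $\hat J^{\delta^{u'}_{u^*}}_0(s_0,y_0)=(1-\delta)\hat J^{u^*}_0(s_0,y_0)+\delta\hat J^{u'}_0(s_0,y_0)$; maximizing this downward parabola over $y_0$, whose apex sits at the $\delta$-weighted mean, gives
\begin{equation}\label{eq_mix_value}
J^{\delta^{u'}_{u^*}}_0(s_0)=(1-\delta)J^{u^*}_0(s_0)+\delta J^{u'}_0(s_0)-\lambda\,\delta(1-\delta)\big(\mu^{u^*}_0(s_0)-\mu^{u'}_0(s_0)\big)^2.
\end{equation}
Regrouping with \eqref{eq_key_identity} turns the increment into $J^{\delta^{u'}_{u^*}}_0(s_0)-J^{u^*}_0(s_0)=\delta\{[\hat J^{u'}_0(s_0,y^*_0)-\hat J^{u^*}_0(s_0,y^*_0)]+\lambda\delta(\mu^{u^*}_0(s_0)-\mu^{u'}_0(s_0))^2\}$. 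Since $u^*$ is built from the uniformly optimal augmented policy $\tilde u^*$, Theorems~\ref{thm:relation} and~\ref{thm:opt} give $\hat J^{u^*}_0(\tilde s_0,y^*_0)=\hat J^*_0(\tilde s_0,y^*_0)\ge\hat J^{u'}_0(\tilde s_0,y^*_0)$ at every $\tilde s_0$, so the bracket is nonpositive up to the $O(\delta)$ term and, at the state where membership in \eqref{eq_barU} is strict, is strictly negative for small $\delta$; this is exactly strict local optimality of $u^*$ in the mixed policy space of Definition~\ref{def:local_opt}.

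Finally, for part~(iii) I would invoke Theorem~\ref{thm:piece_conca}: on the segment of $\mathcal Y$ containing $y^*_0$ the inner optimizer is constant, so there $\hat J^*_0(s_0,y_0)=\hat J^{u^*}_0(s_0,y_0)=J^{u^*}_0(s_0)-\lambda(y_0-\mu^{u^*}_0(s_0))^2$, and the fixed-point relation $y^*_0=\mu^{u^*}_0(s_0)$ places $y^*_0$ at the apex of this parabola, making it a local maximum of $\hat J^*_0(s_0,\cdot)$ over $\mathcal Y$. I expect the main obstacle, common to all three parts, to be the break-point case: closing the gap between the continuous local-optimality claims and the discrete iteration requires showing that Definition~\ref{def:cri} is triggered exactly when two inner-optimal policies at $y^*_0$ have different means, and that the refinement step (lines~7--10) removes precisely these degenerate ties, so that the algorithm can neither terminate while a mean-mismatched inner optimum survives (which would defeat containment in part~(i) or apex alignment in part~(iii)) nor report a spurious non-strict optimum in part~(ii).
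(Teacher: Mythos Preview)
Your proposal is correct. The convergence argument and parts~(i) and~(iii) match the paper's proof essentially line for line: both establish monotone improvement via the chain $J_0^{u^{(k)}}=\hat J_0^{u^{(k)}}(s_0,y_0^{(k)})\le\hat J_0^{u^{(k+1)}}(s_0,y_0^{(k)})\le J_0^{u^{(k+1)}}$, both handle part~(i) by showing that any policy outside $\mathcal U^{\rm HD}_{\rm valid}(u^*)$ with mean $\neq y^*_0$ would strictly improve $J_0$ (so the break-point refinement would have fired), and both settle part~(iii) by observing that $u^*$ stays inner-optimal on an interval around $y^*_0$ where $\hat J^*_0(s_0,\cdot)$ is the single parabola $\hat J^{u^*}_0(s_0,\cdot)$ peaking at $y^*_0$.

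The genuine difference is in part~(ii). The paper builds substantial machinery---performance difference and derivative formulas for finite-horizon MDPs (Lemmas~3--6 in the appendix), transported to the augmented MDP $\widetilde{\mathcal M}$---and then reads off $\partial_\delta J_0^\delta|_{\delta=0}\le 0$ from the Bellman optimality condition for $\tilde u^*$. You instead exploit that the mixed policy $\delta^{u'}_{u^*}$ is a \emph{top-level} randomization, so all expectations are affine in $\delta$; this gives the closed form \eqref{eq_mix_value} and the increment $\delta\{[\hat J^{u'}_0-\hat J^{u^*}_0]+\lambda\delta D^2\}$ in two lines, with the sign coming directly from inner optimality of $u^*$ at $y^*_0$. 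Your route is shorter and more transparent, and it makes the role of the identity \eqref{eq_key_identity} visible throughout; the paper's sensitivity-based route is heavier but would generalize to other notions of policy mixing (e.g., per-stage randomization) where affinity of $\hat J_0$ in $\delta$ fails. Your closing paragraph correctly identifies the shared delicate point: both arguments ultimately lean on the break-point refinement step to rule out a second inner-optimal policy at $y^*_0$ with a different mean, and the paper's proof is no more explicit about this than yours.
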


\begin{remark}{\rm
        (i) With the output policy $u^*$ by Algorithm~\ref{alg:local}, we
        can divide the deterministic policy space $\mathcal{U}^{\rm HD}$ into two parts: $\mathcal{U}_{\rm valid}^{\rm HD}(u^*)$ and $\bar{\mathcal{U}}_{\rm valid}^{\rm HD}(u^*):=\mathcal{U}^{\rm HD} \backslash \mathcal{U}_{\rm valid}^{\rm HD}(u^*)$.
        From the proof of Theorem~\ref{thm:converge} in Appendix, we can see
        that the mean and variance under each policy $u \in
        \bar{\mathcal{U}}_{\rm valid}^{\rm HD}(u^*)$ remain the same as those under policy
        $u^*$. Thus, we have
        \begin{equation*}
            J_0^u(s_0)= J_0^{u^*}(s_0), \quad\forall u \in \bar{\mathcal{U}}_{\rm valid}^{\rm HD}(u^*), s_0 \in \mathcal S.
        \end{equation*}
    We also have
        \begin{equation*}
            \frac{\partial J_0^{\delta_{u^*}^{u}}(s_0)}{\partial
                \delta}\Big|_{\delta=0} \le 0, \quad\forall u \in \mathcal U^{\rm
                HD}, s_0 \in \mathcal S,
        \end{equation*}
        which implies that $u^*$ is a \emph{stationary point} of the value function
        $J^{u}_0(s_0)$ in the mixed policy space $\mathcal{U}^{\rm MIX}$. Furthermore, by
        dividing $\mathcal U^{\rm HD}$ into $\mathcal{U}_{\rm valid}^{\rm HD}(u^*)$ and $\bar{\mathcal {U}}_{\rm valid}^{\rm HD}(u^*)$, we can verify that
        \begin{equation*}
            \begin{array}{rcl}
                \frac{\partial J_0^{\delta_{u^*}^{u}}(s_0)}{\partial
                    \delta}\Big|_{\delta=0} < 0, \quad\forall u \in \mathcal{U}_{\rm valid}^{\rm HD}(u^*), ~ \exists s_0 \in \mathcal S.
            \end{array}
        \end{equation*}
        Therefore, we can conclude that the output policy $u^*$ by Algorithm~\ref{alg:local}
        is \emph{globally optimal} in the deterministic policy space
        $\bar{\mathcal {U}}_{\rm valid}^{\rm HD}(u^*)$ and \emph{strictly locally optimal} in
        the mixed policy space generated by $\mathcal {U}_{\rm valid}^{\rm HD}(u^*)$. The
        relation of these policy spaces is illustrated by
        Figure~\ref{fig:local_global}.
        \begin{figure}[htbp]
            \centering
            \includegraphics[width=0.85\columnwidth]{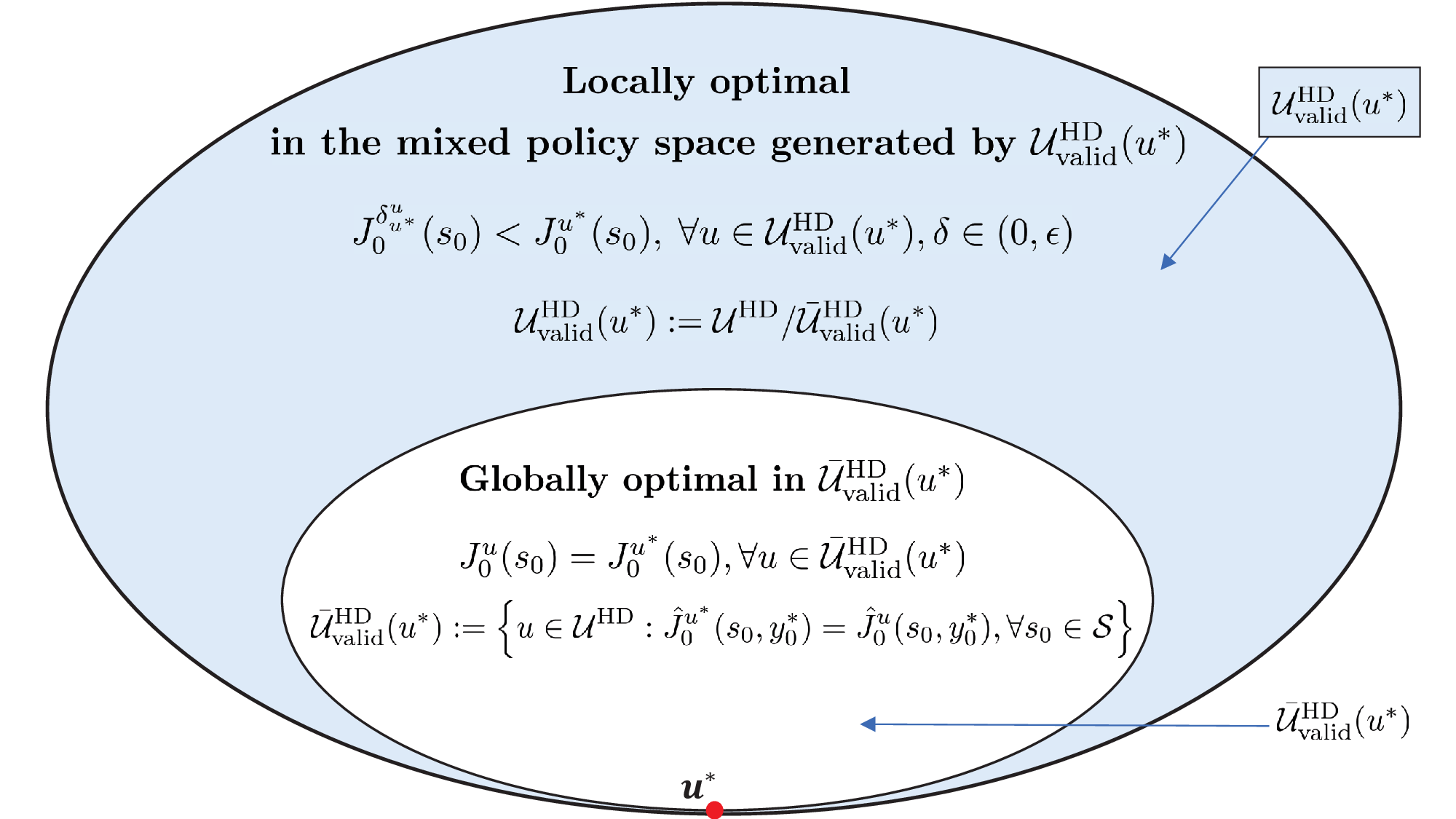}
            \caption{Relation of the optimal policy $u^*$ by
                Algorithm~\ref{alg:local} in different policy spaces.}
            \label{fig:local_global}
        \end{figure}

        (ii) In fact,
        $\bar{\mathcal{U}}_{\rm valid}^{\rm HD}(u^*)$ usually has very few elements, since it
        requires  $J_0^u(s_0)= J_0^{u^*}(s_0)$ and
        $\mu_0^u(s_0)=\mu_0^{u^*}(s_0)$, $\forall u \in \bar{\mathcal{U}}_{\rm valid}^{\rm HD}(u^*), s_0 \in \mathcal S$. We further observe that
        $\bar{\mathcal{U}}_{\rm valid}^{\rm HD}(u^*)$ is empty in most of numerical examples.
        Therefore, we may expect that $\mathcal{U}_{\rm valid}^{\rm HD}(u^*) =
        \mathcal{U}^{\rm HD}$ in most cases and
        Algorithm~\ref{alg:local}
        converges to a strictly local optimum in the mixed policy space $\mathcal{U}^{\rm MIX}$.

%        {\color{red}
%           (iii) Since $u^*$ attains the maximization of the pseudo MV-MDP $\hat{\mathcal M}(y_0^*)$, it satisfies the following Bellman equation
%           \begin{equation}\label{eq:local_bellman}
%               \hat{J}_0^{u^*}(s_0,y_0^*)=\max\limits_{a \in \mathcal A(s_0)}\left\{r_0(s_0,a)+\sum\limits_{s_1 \in \mathcal S}P(s_1|s_0,a)\hat{J}_1^{u^*}(s_1,y_0^*-r_0(s_0,a))\right\},
%           \end{equation}
%           where $\hat{J}_1^{u^*}(s_1,y_0^*-r_0(s_0,a))\cdots$.
%           It implies that a locally optimal policy $u^*$ satisfies the \emph{Bellman locally optimal equation} \eqref{eq:local_bellman}.
%           If $u^*$ is further a globally optimal policy,
%        }
    }
\end{remark}

It is known that policy iteration usually has a fast convergence in
classical MDPs, although its complexity analysis is still an open
question \citep{Littman95}. Since Algorithm~\ref{alg:local} is of a
form of policy iteration, it is expected that
Algorithm~\ref{alg:local} also has a fast convergence in practice,
which is demonstrated by examples in Section~\ref{sec:exp}. As
illustrated in Figure~\ref{fig:pie_conca}, the optimal pseudo
mean-variance $\hat{J}^*_0(s_0,y_0)$ is piecewise quadratic concave
with $y_0$, which leads to a local convergence guaranteed by
Theorem~\ref{thm:converge}. If we can find a condition under which
the function $\hat{J}^*_0(s_0,y_0)$ is concave with $y_0$, the
global convergence of Algorithm~\ref{alg:local} can be guaranteed,
which is stated by Theorem~\ref{thm:global} below.

\begin{thm}\label{thm:global}
If the following two conditions hold at each $t \in \mathcal T$,
    \begin{itemize}
        \item[(i)]({\bf convexity}) Both $\mathcal S$ and $\mathcal A$ are convex real number spaces, $\mathcal A$ is compact; the feasible space of state-action pairs $\mathcal K$ is a convex set;
        \item[(ii)]({\bf linearity}) Both the state transition function $s_{t+1}=f_t(s_t,a_t,\xi_t)$ and the reward function $r_t(s_t,a_t,\xi_t)$ are linear to state $s_t \in \mathcal S$ and action $a_t \in \mathcal A$, that is,
        \begin{eqnarray*}
            f_t(s_t,a_t,\xi_t)&=&f_{t,1}(\xi_t) s_t +
            f_{t,2}(\xi_t)a_t+f_{t,3}(\xi_t),\\
            r_t(s_t,a_t,\xi_t)&=&r_{t,1}(\xi_t) s_t +
            r_{t,2}(\xi_t)a_t+r_{t,3}(\xi_t),
        \end{eqnarray*}
where $\xi_t$ is a random variable capturing all the stochasticity
of the system and is defined with support on $\mathcal X$ and
distribution $q_t$, $\left\{(f_{t,i},r_{t,i}),i=1,2,3\right\}$ are
functions on $\mathcal X$.
    \end{itemize}
Then Algorithm \ref{alg:local} converges to the global optimum.
\end{thm}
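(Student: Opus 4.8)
The plan is to reduce the theorem to a single concavity statement: under conditions (i) and (ii), the optimal pseudo mean-variance function $\hat J_0^*(s_0,y_0)=V_0^*(s_0,y_0)$ is \emph{globally} concave in $y_0$. Granting this, the outer problem (\ref{equ:outer}) becomes a concave maximization over the convex interval $\mathcal Y=[T\underline r,T\bar r]$, so every local maximizer is automatically a global maximizer. Theorem~\ref{thm:converge}(iii) already guarantees that Algorithm~\ref{alg:local} converges to a local optimum $y_0^*\in\mathcal Y$ of $\hat J_0^*(s_0,\cdot)$; concavity then forces this $y_0^*$ to be the global maximizer, and by Theorem~\ref{thm:opt_policy} the associated policy is globally optimal for $\mathcal M$. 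In effect, the extra hypotheses upgrade the piecewise quadratic concave curve of Theorem~\ref{thm:piece_conca} into a single concave curve, eliminating the spurious local optima.

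To establish concavity I would work in the stochastic control form of the model from Remark (iii), where $s_{t+1}=f_t(s_t,a_t,\xi_t)$ and the reward is $r_t(s_t,a_t,\xi_t)$, and prove the stronger claim that each $V_t^*(s,y)$ is \emph{jointly} concave in $(s,y)$, by backward induction on $t$. The base case is immediate: $V_T^*(s,y)=-\lambda y^2$ is concave in $y$ (since $\lambda\ge 0$) and constant in $s$, hence jointly concave. For the inductive step, assume $V_{t+1}^*$ is jointly concave and consider the pre-maximization function
$$Q_t(s,y,a):=\mathbb{E}_{\xi_t}\big[r_t(s,a,\xi_t)+V_{t+1}^*(f_t(s,a,\xi_t),\,y-r_t(s,a,\xi_t))\big].$$
By the linearity condition (ii), for each fixed realization $\xi_t=\xi$ the map $(s,a,y)\mapsto(f_t(s,a,\xi),\,y-r_t(s,a,\xi))$ is affine, so the composition $V_{t+1}^*(f_t,\,y-r_t)$ is jointly concave in $(s,a,y)$; adding the affine term $r_t(s,a,\xi)$ preserves concavity, and integrating over $\xi_t$ against $q_t$ (a nonnegatively weighted mixture of concave functions) keeps $Q_t(s,y,a)$ jointly concave in $(s,a,y)$.

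The decisive step is the partial maximization $V_t^*(s,y)=\max_{a\in\mathcal A(s)}Q_t(s,y,a)$, where I would invoke the convexity condition (i). Because $\mathcal K=\{(s,a):a\in\mathcal A(s)\}$ is convex and $y$ is unconstrained, the feasible region $\{(s,y,a):(s,a)\in\mathcal K,\,y\in\mathbb{R}\}$ is convex, and it is a standard fact that partial maximization of a jointly concave function over a convex feasible set yields a concave function of the remaining variables $(s,y)$; the short proof is to take near-optimal actions $a_1,a_2$ at $(s_1,y_1),(s_2,y_2)$, use convexity of $\mathcal K$ to feasibly interpolate them, and apply joint concavity of $Q_t$. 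Compactness of $\mathcal A$ together with the continuity/measurable-selection conditions noted in Remark~(ii)--(iii) guarantees the maximum is attained, so $V_t^*$ is well defined and bounded. This closes the induction and gives concavity of $V_0^*(s_0,\cdot)=\hat J_0^*(s_0,\cdot)$.

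I expect the main obstacle to be the careful justification of the partial-maximization step: one must verify that the object being maximized is concave \emph{jointly} in $(s,a,y)$ rather than merely in $(s,y)$ for each fixed $a$, and that convexity of $\mathcal K$ is precisely what makes the feasible region of the maximization convex. A secondary technical point is the measurable-selection argument ensuring attainment of the maximum in the continuous-state/action regime; this is vacuous for the finite model of Section~\ref{sec:model} but genuinely needed for the linear-quadratic applications such as portfolio selection, where $\mathcal S$ and $\mathcal A$ are real spaces and the compactness of $\mathcal A$ in condition (i) is exactly the hypothesis that supplies it.
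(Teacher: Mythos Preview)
Your proposal is correct and follows essentially the same route as the paper: both prove by backward induction that $V_t^*(s,y)$ is jointly concave in $(s,y)$, using that for each realization of $\xi_t$ the map $(s,a,y)\mapsto(f_t,y-r_t)$ is affine so that composition with the concave $V_{t+1}^*$ yields joint concavity in $(s,a,y)$, then invoke convexity of $\mathcal K$ to pass the concavity through the partial maximization over $a$, and finally combine the resulting concavity of $\hat J_0^*(s_0,\cdot)$ with the local convergence of Theorem~\ref{thm:converge} to upgrade to global optimality. Your discussion of the partial-maximization step and the role of compactness/measurable selection is in fact more explicit than the paper's, which simply assumes existence of the optimal policy $\tilde u^*$ and writes out the convex combinations directly.
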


\begin{remark}{\rm
(i) In Condition~(ii), we treat the next state $f_t(s_t,a_t,\xi_t)$
and the reward $r_t(s_t,a_t,\xi_t)$ as random variables, which can
be unified with the MDP models used in Section~\ref{sec:model},
where the transition probability and the one-step reward can be
determined by $f_t$ and $r_t$, respectively. In this sense, the
Bellman operator defined in (\ref{equ:opt_opera}) takes a slightly
different form
\begin{equation}\nonumber
\mathbb L^*_t v(s,y)=\max\limits_{a \in \mathcal{A}(s)}\int_{\mathcal X}
\left\{r_t(s,a,x)+v(f_t(s,a,x),y-r_t(s,a,x))\right\}q_t(dx),\quad
v \in \mathcal B(\tilde{\mathcal{S}}), (s,y) \in \tilde{\mathcal{S}}.
\end{equation}
All the results in Sections~\ref{sec:model} $\sim$ \ref{sec:alg}
still hold.

(ii) In the proof of Theorem~\ref{thm:global}, we need the concavity
of $\int_{\mathcal
X}V^*_{t+1}(f_{t+1}(s_t,a_t,x),y_t-r_t(s_t,a_t,x))q_t(dx)$ with
respect to $(s,a,y) \in \mathcal K \times \mathcal Y$. Therefore,
the state and action spaces are supposed to be continuous. For
continuous state and action spaces, Algorithm~\ref{alg:local} still
converges to a local optimum by using the \emph{monotone convergence
theorem}, which is consistent to the case of discrete and finite
spaces.}
\end{remark}

For these MV-MDPs with linear transition and linear reward, we
further find some structural properties that can speed up
Algorithm~\ref{alg:local}, as stated in
Theorem~\ref{thm:structure_mv} and Remark~\ref{rem:structure}.

\begin{thm}\label{thm:structure_mv}
Suppose the convexity and linearity conditions in
Theorem~\ref{thm:global} hold, then the optimal pseudo mean $y^*_0$
is linear to $s_0$, that is, $y^*_0 = k_1 s_0 + k_0$ for some real
numbers $k_0, k_1$ independent of $s_0$.
%   \begin{itemize}
%       \item[(i)] The optimal pseudo mean $y^*_0$ is linear with $s_0$, that is, $y^*_0=k_1 s_0 + k_0$ for some real numbers $k_0, k_1$ independent of $s_0$;
%       \item[(ii)] The optimal mean-variance $J_0^*(s_0)$ is linear with $s_0$, that is, $J_0^*(s_0)=k_1 s_0 + k_0'$, where $k_1$ is the same real number as in (i).
%   \end{itemize}
    \end{thm}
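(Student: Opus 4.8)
The plan is to exploit the linear–quadratic structure induced by Conditions (i)–(ii) of Theorem~\ref{thm:global} and to show, by backward induction, that every optimal value function $V_t^*(s,y)$ of the augmented MDP $\widetilde{\mathcal M}$ is a \emph{jointly quadratic} polynomial in $(s,y)$. Once this is established at stage $t=0$, the identity $\hat J_0^*(s_0,y_0)=V_0^*(s_0,y_0)$ from Theorem~\ref{thm:relation} reduces the outer problem (\ref{equ:outer}) to maximizing a concave quadratic in $y_0$, whose maximizer is necessarily affine in $s_0$. Concretely, I would posit the ansatz
\[
V_t^*(s,y)=A_t y^2+B_t sy+C_t s^2+D_t y+E_t s+F_t
\]
and verify it propagates backward through $V_t^*=\mathbb L^*_t V_{t+1}^*$.

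The base case is immediate: $V_T^*(s,y)=-\lambda y^2$, so $A_T=-\lambda$ and the remaining coefficients vanish. For the inductive step I would substitute the linear maps $f_t(s,a,x)=f_{t,1}(x)s+f_{t,2}(x)a+f_{t,3}(x)$ and $r_t(s,a,x)=r_{t,1}(x)s+r_{t,2}(x)a+r_{t,3}(x)$ into the Bellman operator of Remark~(i), forming
\[
Q_t(s,a,y):=\int_{\mathcal X}\Big\{r_t(s,a,x)+V_{t+1}^*\big(f_t(s,a,x),\,y-r_t(s,a,x)\big)\Big\}q_t(dx).
\]
Since $V_{t+1}^*$ is quadratic and $f_t,r_t$ are affine in $(s,a)$, the integrand is, for each fixed $x$, a quadratic polynomial in $(s,a,y)$; integrating against $q_t$ preserves this form with coefficients given by the moments of $f_{t,i},r_{t,i}$ under $q_t$. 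Hence $Q_t$ is jointly quadratic in $(s,a,y)$.

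The crux is the maximization $V_t^*(s,y)=\max_{a\in\mathcal A(s)}Q_t(s,a,y)$. Here I would invoke the concavity established in the proof of Theorem~\ref{thm:global}: under Conditions (i)–(ii), $Q_t$ is concave in $(s,a,y)$, and in particular (strictly) concave in $a$. The maximizer $a_t^*(s,y)$ is therefore characterized by the first-order condition $\partial_a Q_t=0$, which, $Q_t$ being quadratic, is \emph{linear} in $(s,y)$; solving it yields an optimizer $a_t^*(s,y)$ affine in $(s,y)$, and substituting it back into the quadratic $Q_t$ produces a quadratic $V_t^*(s,y)$, closing the induction. The main obstacle is exactly this step: one must ensure the unconstrained stationary point remains admissible, so that the \emph{global} quadratic form is retained rather than being clipped into a piecewise-quadratic value. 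This is where the compactness and convexity of $\mathcal A$ and $\mathcal K$, together with interiority of the mean–variance optimizer, are used, mirroring the unconstrained portfolio setting of \cite{li2000optimal}; I would flag it as the delicate point requiring care.

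Finally, at $t=0$ we obtain $\hat J_0^*(s_0,y_0)=V_0^*(s_0,y_0)=A_0 y_0^2+B_0 s_0 y_0+C_0 s_0^2+D_0 y_0+E_0 s_0+F_0$. Because each trajectory contributes the terminal term $-\lambda(y_0-R_{0:T})^2$, whose $y_0^2$-coefficient is exactly $-\lambda$ independently of the policy, one has $A_0=-\lambda<0$ (consistent with the concavity of $\hat J_0^*$ in $y_0$ from Theorem~\ref{thm:global}). The concave quadratic in $y_0$ thus attains its unique maximizer through
\[
\frac{\partial V_0^*}{\partial y_0}=2A_0 y_0+B_0 s_0+D_0=0,
\]
which gives
\[
y_0^*=-\frac{B_0}{2A_0}\,s_0-\frac{D_0}{2A_0}=:k_1 s_0+k_0,
\]
with $k_1,k_0$ independent of $s_0$, establishing the claimed linearity.
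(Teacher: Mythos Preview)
Your approach is essentially the same as the paper's: both argue by backward induction that $V_t^*$ is jointly quadratic in $(s,y)$, use the fact that the first-order condition in $a$ is linear so the maximizer is affine, substitute back, and then read off $y_0^*$ as the vertex of a concave quadratic in $y_0$. The paper writes the ansatz as $V_t^*(s_t,y_t)=b_{2,t}y_t^2+(b_{0,t}+b_{1,t}s_t)y_t+g_t(s_t)$ with $g_t$ quadratic, which is exactly your six-coefficient form; and like you, it silently assumes the unconstrained stationary point in $a$ is admissible---you are right to flag this as the delicate step.

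One correction: your claim that $A_0=-\lambda$ is not correct. For a \emph{fixed} policy $u$ the $y_0^2$-coefficient of $\hat J_0^u(s_0,y_0)$ is indeed $-\lambda$, but $\hat J_0^*$ is the upper envelope over policies that themselves depend on $y_0$, so the leading coefficient can change; for instance, in the portfolio example of Section~\ref{sec:pf} one gets $A_0=-\lambda\prod_\tau C_\tau\neq-\lambda$. This does not damage your argument, since all you actually need is $A_0<0$, which you correctly obtain from the concavity established in Theorem~\ref{thm:global}; just drop the incorrect justification and rely on that.
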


\begin{remark}\label{rem:structure}\rm{
To obtain the global convergence of Algorithm~\ref{alg:local},
Theorem~\ref{thm:global} requires that the state space is continuous
real number space, which is infinite. It is inefficient to traverse
each initial state in Algorithm~\ref{alg:local}. However,
Theorem~\ref{thm:structure_mv} implies that it is sufficient to
implement Algorithm~\ref{alg:local} for only two initial states.
Since $y^*_0$ is linear to $s_0$, the optimal pseudo mean for other
initial states can be directly computed by using $y^*_0 = k_0 + k_1
s_0$. Therefore, we only need to implement Algorithm~\ref{alg:local}
for two initial states $s_0^1,s_0^2 \in \mathcal S$ and further
directly solve the standard MDP $\widetilde{\mathcal M}$ with initial
state $(s_0,y^*_0)$ for other $s_0 \in \mathcal S \backslash
\left\{s_0^1,s_0^2\right\}$.}
\end{remark}

In practice, many linear control models satisfy the two conditions
in Theorem~\ref{thm:global}. For example, $s_{t+1} = A s_t + B a_t +
O \nu_t$ and $r_{t} = C s_t + D a_t + O' \nu_t$, where $s_t$ and
$a_t$ are physical state variable and control variable,
respectively, which are usually bounded real vectors, $A,B,C,D,O,O'$
are matrices with proper dimensions, and $\nu_t$ is a noise process.
The mean-variance optimization of accumulated rewards
$\sum_{t=0}^{T-1}r_t$ of such linear system satisfies
Conditions~(i)\&(ii) and our Algorithm~\ref{alg:local} can find the
globally optimal control law. In the next section, we will discuss
some application examples that exactly satisfy such conditions.

\section{Application Examples}\label{sec:exp}

In this section, we apply the theoretical results and the algorithm
in Sections~\ref{sec:opt} and \ref{sec:alg} to some practical
examples, including multi-period mean-variance optimization for
portfolio selection, queueing control, and inventory management
problems.

\subsection{Multi-Period Mean-Variance Portfolio Selection}\label{sec:pf}
Multi-period mean-variance portfolio selection is a well-known
challenging problem in finance engineering, which is described as
follows. An investor has an initial wealth $s_0$. There are a
riskless security ($0$) and $n$ risky securities ($1,\ldots,n$) in
the market. Each security $i$ takes a random return rate $e_t^i$ at
period $t$, and the expectation of $e_t^i$ and the covariance of
$e_t^i$ and $e_t^j$ are known, $\forall i,j=1,2,\dots,n$,
$t=0,1,\dots,T-1$. The objective is to find the best allocation of
wealth among these securities such that the mean and variance of
terminal wealth at period $T$ is optimized. The single-period
portfolio selection was initially proposed by the Nobel laureate
\cite{markowitz1952}, while the multi-period case is challenging
because of the time inconsistency. \cite{li2000optimal} proposed a
so-called embedding method to solve this problem in an analytical
form, via a formulation of stochastic control model, which initiates
intensive research attention following this pioneering work. In this
subsection, we use the MDP model to formulate this problem and apply
our approach to solve it. We find that our MDP approach can obtain
the same result as that of \cite{li2000optimal} and further show
that our Algorithm~\ref{alg:local} can find the global optimum of
this problem.

We formulate the multi-period mean-variance portfolio selection
problem as a finite-horizon MV-MDP $\mathcal{M}_p = \langle \mathcal
T, \mathcal S, \mathcal A, (\bm Q_t, t \in \mathcal T), (r_t, t \in
\mathcal T) \rangle$. For each period $t \in \mathcal
T:=\left\{0,1,\ldots,T-1\right\}$, the state $s_t \in \mathcal S
:=(0,+\infty)$ represents the current wealth, action $\bm
a_t=(a_t^1,\ldots,a_t^n)' \in \mathcal A:=\mathbb R^n$ denotes the
allocation of wealth $s_t$ among $n$ risky securities, where
$a_t^i<0$ represents short sale and the superscript $^\prime$
indicates the transpose of vectors. All the left wealth
$s_t-\sum\limits_{i=1}^n a_t^i$ is allocated to the riskless
security 0 with a constant return rate $e_t^0$. The state transition
is determined by $s_{t+1}=e_t^0 s_t+\bm Q_t' \bm a_t$, where $\bm
Q_t'=[e_t^1-e_t^0,\ldots,e_t^n-e_t^0]$ is the excess return vector.
The one-step instantaneous reward is set as the wealth changed,
i.e., $r_t(s_t,\bm a_t, \bm e_t)=e_t^0 s_t+\bm Q_t' \bm a_t-s_t$,
where $\bm e_t=(e_t^1,\ldots,e_t^n)'$ is the random variable vector
of return rates which captures the stochasticity of the whole
system. The terminal wealth $s_T=s_0+\sum\limits_{t=0}^{T-1}r_t(s_t,
a_t, \bm e_t)=s_0+R_{0:T}$. The objective is to maximize the
combined mean-variance metric of the terminal wealth, i.e.,
\begin{align}\label{equ:mv_pf}
    J_0^*(s_0)
    &=\max\limits_{u \in \mathcal U^{\rm HR}} J_0^u(s_0) = \max\limits_{u \in \mathcal U^{\rm HR}} \left\{\mathbbm E_{s_0}^u[s_T]-\lambda\sigma_{s_0}^u(s_T) \right\} \nonumber\\
    &=\max\limits_{u \in \mathcal U^{\rm HR}} \left\{\mathbbm E_{s_0}^u\big[s_0+R_{0:T}\big]-\lambda\mathbbm E_{s_0}^u\big[(s_0+R_{0:T}-\mathbbm E_{s_0}^u(s_0+R_{0:T}))^2\big]\right\}.
\end{align}
It is easy to verify that this problem setting satisfies
Conditions~(i)\&(ii) of Theorem~\ref{thm:global}, since $s_t$ and
$\bm a_t$ belong to real spaces, and $s_{t+1}$ and $r_{t}$ have
linear forms. Following the optimization approach in
Section~\ref{sec:opt}, we convert the above maximization problem to
a bilevel MDP,
\begin{align}\label{equ:bilevel_pf}
    J_0^*(s_0)
    =\max\limits_{y_0 \in \mathcal{Y}}\max\limits_{u \in \mathcal U^{\rm HD}} \mathbbm{E}_{s_0}^{u}\big[s_0+R_{0:T}-\lambda\big(s_0+R_{0:T}-y_0\big)^2\big].
\end{align}
Given $y_0$, the inner level is a pseudo MV-MDP to
maximize the pseudo mean-variance of the terminal wealth,
\begin{equation}\label{equ:inner_pf}
    \hat J_0^*(s_0,y_0) = \max\limits_{u \in \mathcal U^{\rm HD}} \hat J_0^u(s_0,y_0) = \max\limits_{u \in \mathcal U^{\rm HD}}
\mathbbm{E}_{s_0}^{u}\big[s_0+R_{0:T}-\lambda\big(s_0+R_{0:T}-y_0\big)^2\big].
\end{equation}
In contrast to the general pseudo MV-MDP (\ref{equ:inner}), dynamic
programming can be directly applied to solve (\ref{equ:inner_pf})
without augmented state space because this problem has a special
form of reward function $r_t=s_{t+1}-s_t$ and the total wealth $s_T
\equiv s_t+R_{t:T}, \forall t \in \mathcal
T$. We summarize this result as Theorem~\ref{thm:dp_pf} below.

\begin{thm}\label{thm:dp_pf}
Given $y_0 \in \mathcal Y$, define an operator $ \mathbb{\hat L}^*_t:
\mathcal B(\tilde{\mathcal{S}}) \rightarrow \mathcal B(\tilde{\mathcal{S}})$ for $t
\in \mathcal T$ by
\begin{equation}\label{equ:opt_opera_pf}
    \mathbb{\hat L}^*_t  v(s_t,y_0)=\max\limits_{\bm a \in \mathcal{A}(s_t)}\mathbbm E[v(e_t^0 s_t+\bm Q_t' \bm a,y_0)],\quad
     v \in \mathcal B(\tilde{\mathcal{S}}).
\end{equation}
And we define a function sequence $\left\{V^*_t \in \mathcal B(\tilde{\mathcal{S}}); t \in \mathcal
T\right\}$ by
\begin{equation}\label{equ:dp_pf}
  V^*_t= \mathbb{\hat L}^*_t V^*_{t+1},\quad\forall t \in \mathcal T \text{ and } V^*_T(s_T,y_0):=s_T-\lambda (s_T-y_0)^2,
\end{equation}
then we have $\hat J_0^*=V^*_0$. Further, if $\bm a_t^*\in \mathcal A(s_t)$
attains the maximum in the operation $\mathbb{\hat{L}}^*_t
V^*_{t+1}(s_t,y_0)$, then the policy $\hat{u}^*=(\hat{u}^*_t; t \in
\mathcal T) \in {\mathcal U}^{\rm {MD}}$ with $\hat{u}_{t}^*(s_t)=\bm
a_t^*(s_t,y_0)$ is an optimal policy for the inner pseudo MV-MDP
(\ref{equ:inner_pf}), which is a Markov policy depending only on the
current state $s_t$.
\end{thm}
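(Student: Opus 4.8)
The plan is to exploit the special telescoping structure of the portfolio reward so that the augmented MDP of Theorem~\ref{thm:opt} collapses back to a standard finite-horizon MDP on the wealth state alone, eliminating the auxiliary state $y_t$. First I would establish the telescoping identity: since the one-step reward is $r_t(s_t,\bm a_t,\bm e_t)=e_t^0 s_t+\bm Q_t'\bm a_t-s_t=s_{t+1}-s_t$, summing over $\tau=t,\ldots,T-1$ yields $R_{t:T}=s_T-s_t$, hence $s_t+R_{t:T}=s_T$ for every $t\in\mathcal T$; in particular $s_0+R_{0:T}=s_T$. Substituting this into the inner objective (\ref{equ:inner_pf}) gives $\hat J_0^u(s_0,y_0)=\mathbbm E_{s_0}^u[g(s_T,y_0)]$ with $g(s,y_0):=s-\lambda(s-y_0)^2$. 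The crucial point is that the objective now reduces to a single \emph{terminal} reward depending only on $s_T$ and the fixed parameter $y_0$, with no running reward: both the additive part and the quadratic penalty bundle into one function of $s_T$, so---unlike the general case of (\ref{equ:inner})---no accumulation of history, and hence no state augmentation, is needed.

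Second, for fixed $y_0$ this is a standard finite-horizon MDP with state $s_t\in\mathcal S$, transition $s_{t+1}=e_t^0 s_t+\bm Q_t'\bm a_t$, zero running reward, and terminal reward $g(s_T,y_0)$. I would then invoke the finite-horizon dynamic programming principle to obtain the Bellman recursion $V_t^*=\mathbb{\hat L}^*_t V_{t+1}^*$ with $V_T^*(s_T,y_0)=g(s_T,y_0)$, the identity $V_0^*=\hat J_0^*$, and the optimality of the stagewise greedy rule $\hat u_t^*(s_t)=\bm a_t^*(s_t,y_0)$. Because both the transition and the objective depend on the past only through the current wealth $s_t$, the optimal policy is Markov, in contrast to the history-dependent optimal policies of the general MV-MDP in Theorem~\ref{thm:opt_policy}.

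The main obstacle is existence of the maximizer at each stage: the action space $\mathcal A=\mathbb R^n$ is unbounded, so the measurable-selection remark following Theorem~\ref{thm:opt_policy} (which presumes compact actions) does not apply directly. I would resolve this by a backward concavity induction. The terminal value $V_T^*(\cdot,y_0)$ is concave in $s$ owing to the $-\lambda(\cdot)^2$ term; assuming $V_{t+1}^*(\cdot,y_0)$ is concave, the map $(s,\bm a)\mapsto\mathbbm E[V_{t+1}^*(e_t^0 s+\bm Q_t'\bm a,y_0)]$ is jointly concave (a concave function composed with an affine argument, with expectation preserving concavity) and, for $\lambda>0$, coercive in $\bm a$ (the quadratic penalty drives it to $-\infty$ as $\|\bm a\|\to\infty$), so the supremum over $\bm a$ is attained; partial maximization of a jointly concave function keeps $V_t^*(\cdot,y_0)$ concave, closing the induction. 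This guarantees that $\bm a_t^*(s_t,y_0)$ exists at every stage, so the induced Markov deterministic policy $\hat u^*$ is well defined and optimal for (\ref{equ:inner_pf}).
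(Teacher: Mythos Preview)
Your proposal is correct and rests on the same key observation as the paper: the telescoping identity $r_t=s_{t+1}-s_t$ collapses $s_0+R_{0:T}$ to $s_T$, so the pseudo mean-variance objective becomes a pure terminal reward $g(s_T,y_0)$ and the auxiliary state $y_t$ is unnecessary. The paper, however, does not phrase it as ``reduce to a terminal-reward MDP and invoke standard DP'' but instead proves an explicit policy-evaluation lemma (their Lemma~\ref{lem:dp_policy}) by backward induction, showing $V_t^u=\hat J_t^u$ directly from the operator $\mathbb{\hat L}_t^{u_t}$ and the identity $e_t^0 s_t+\bm Q_t'\bm a_t+R_{t+1:T}=s_t+R_{t:T}$; it then sandwiches $V_0^*$ between $\hat J_0^*$ via the usual two inequalities, invoking Theorem~5.5.1 of \cite{Puterman94} to reduce $\mathcal U^{\rm HR}$ to $\mathcal U^{\rm MR}$. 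Your route is more conceptual and arguably cleaner; theirs is more self-contained. One point you handle more carefully than the paper is the existence of the maximizer over the unbounded action space $\mathcal A=\mathbb R^n$: your concavity-plus-coercivity induction is the right argument (and implicitly requires $\bm\Sigma_t=\mathbbm E[\bm Q_t\bm Q_t']$ to be positive definite), whereas the paper simply asserts existence and later recovers it from the explicit quadratic computation. Conversely, you should make the reduction from history-dependent to Markov policies explicit (as the paper does via Puterman), since your sentence ``the optimal policy is Markov'' is an appeal to a standard theorem rather than a proof.
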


%\begin{proof}
%This theorem is similar to Theorem~\ref{thm:opt}, but has some
%differences due to the special reward function. We present the proof
%in Appendix~\ref{app:dp_pf}.
%\end{proof}

From (\ref{equ:dp_pf}), we find that $y_0$ does not change during
the procedure of dynamic programming, which is different from
part~(b) of Theorem~\ref{thm:opt}. Thus, in this specific model of
portfolio selection, we need not to treat $y_0$ as an auxiliary
state, which is different from the augmented state $(s_t, y_t) \in
\tilde {\mathcal S}$ defined in Section~\ref{sec:opt}. The inner
pseudo MV-MDP (\ref{equ:inner_pf}) can be simplified as a standard
finite-horizon MDP, where $y_0$ can be viewed as a predetermined
parameter of this MDP. The optimal policy $\hat u^*$ can be
deterministic Markovian, not depending on history anymore. For
notational simplicity, in what follows, we rewrite $y_0$ as $y$ to
avoid misunderstandings.

Therefore, we can solve the inner pseudo MV-MDP (\ref{equ:inner_pf})
by using dynamic programming (\ref{equ:dp_pf}). We obtain analytical
solutions of an optimal policy $\hat u^*=(\hat u_t^*; t \in \mathcal
T) \in \mathcal U^{\rm MD}$ and the optimal value function $\hat
J_0^*$ as follows, where we utilize the quadratic form of $\mathbbm E[V_t^*(e_t^0 s_t+\bm Q_t' \bm a,y_0)]$
with respect to $\bm a$ and the detailed analysis process is ignored
for space limit.
\begin{align}\label{equ:policy_pf}
    \hat u^*_{t}(s_t)=\left[-e_t^0 s_t+({y}+\frac{1}{2\lambda}){\prod_{\tau=t+1}^{T-1}(e_\tau^0)^{-1}}\right]\bm\Sigma_t^{-1}\bm\mu_t, \quad s_t \in \mathcal S, t \in \mathcal T.
\end{align}
%\begin{align}
%    \hat J_0^*(s_0,y)
%    &=-\lambda \left(1-\sum\limits_{\tau=0}^{T-1}C_\tau\right){y}^2 + \left(\sum\limits_{\tau=0}^{T-1}C_\tau+2\lambda \prod_{\tau=0}^{T-1}e_\tau^0\left[1-\bm\mu_{\tau}'\bm\Sigma_\tau^{-1}\bm\mu_{\tau}\right] s_0 \right)y \nonumber\\
%    &+ \frac{1}{4\lambda}\sum\limits_{\tau=0}^{T-1}C_\tau+
%    \prod_{\tau=0}^{T-1}e_\tau^0\left[1-\bm\mu_{\tau}'\bm\Sigma_\tau^{-1}\bm\mu_{\tau}\right]s_0
%    -\lambda \prod_{\tau=0}^{T-1}(e_\tau^0)^2\left[1-\bm\mu_{\tau}'\bm\Sigma_\tau^{-1}\bm\mu_{\tau}\right]s_0^2, \quad\forall s_0 \in \mathcal S,\nonumber
%\end{align}
\begin{align}
    \hat J_0^*(s_0,y)
    &=-\lambda \left(\prod_{\tau=0}^{T-1}C_\tau\right) {y}^2 + \left[1-\prod_{\tau=0}^{T-1}C_\tau+2\lambda \prod_{\tau=0}^{T-1}\left(e_\tau^0 C_\tau\right) s_0 \right]y \nonumber\\
    &+ \frac{1}{4\lambda}\left(1-\prod_{\tau=0}^{T-1}C_\tau\right)+
    \prod_{\tau=0}^{T-1}\left(e_\tau^0 C_\tau\right) s_0
    -\lambda \prod_{\tau=0}^{T-1}\left((e_\tau^0)^2 C_\tau\right) s_0^2, \quad s_0 \in \mathcal S,\nonumber
\end{align}
where $\bm\mu_t := \mathbbm{E} [\bm Q_t]$, $\bm\Sigma_t :=
\mathbbm{E}[\bm Q_t \bm Q_t']$, and $C_t :=
1-\bm\mu_t'\bm\Sigma_t^{-1}\bm\mu_t$.
%\begin{equation}
%    \nonumber
%    C_t := {\prod_{\tau=t+1}^{T-1}\left[1-\bm\mu_\tau'\bm\Sigma_\tau^{-1}\bm\mu_\tau\right]}\bm\mu_t'\bm\Sigma_t^{-1}\bm\mu_t.
%\end{equation}
%\begin{equation}
%    \nonumber
%    C_t := 1-\bm\mu_t'\bm\Sigma_t^{-1}\bm\mu_t.
%\end{equation}

Note that $\hat J_0^*(s_0,y)$ is quadratically concave with respect
to $y$. Therefore, the outer level of the bilevel MDP (\ref{equ:bilevel_pf}) is a quadratic convex optimization
problem and can be solved analytically with solution
%\begin{equation}\label{eq_y*s0}
%    y^*_0=\frac{2\lambda \prod_{\tau=0}^{T-1}e_\tau^0\left[1-\bm\mu_{\tau}'\bm\Sigma_\tau^{-1}\bm\mu_{\tau}\right] s_0 + \sum\limits_{\tau=0}^{T-1}C_\tau}{2\lambda \left(1-\sum\limits_{\tau=0}^{T-1}C_\tau\right)}, \quad\forall s_0 \in \mathcal S.
%\end{equation}
\begin{equation}\label{eq_y*s0}
    y^*=\left(\prod_{\tau=0}^{T-1} e_\tau^0\right) s_0 + \frac{1-\prod_{\tau=0}^{T-1} C_\tau}{2\lambda \prod_{\tau=0}^{T-1} C_\tau}, \quad s_0 \in \mathcal S.
\end{equation}
and the corresponding mean-variance is
\begin{equation}\label{eq_J*s0}
 J_0^*(s_0)=\hat J_0^*(s_0,y^*)
    =\left(\prod_{\tau=0}^{T-1} e_\tau^0\right) s_0 + \frac{1-\prod_{\tau=0}^{T-1} C_\tau}{4\lambda \prod_{\tau=0}^{T-1} C_\tau}, \quad s_0 \in \mathcal S.
\end{equation}
(\ref{eq_y*s0}) implies that $y^*$ is linear to $s_0$, which is
consistent with Theorem~\ref{thm:structure_mv}.
Therefore, we can obtain the optimal policy $u^*=(u^*_t;t \in
\mathcal T) \in \mathcal U^{\rm HD}$ for the original multi-period
mean-variance portfolio selection problem (\ref{equ:mv_pf}) by
substituting (\ref{eq_y*s0}) into (\ref{equ:policy_pf}), i.e.,
\begin{equation}\label{equ:opt_policy}
    u_t^*(s_t)=-\bm\Sigma_t^{-1}\bm\mu_t e_t^0 s_t+\left(\prod_{\tau=0}^{T-1}e_\tau^0 s_0 + \frac{1}{2\lambda \prod_{\tau=0}^{T-1}C_\tau}\right)
    {\prod_{\tau=t+1}^{T-1}(e_\tau^0)^{-1}}\bm\Sigma_t^{-1}\bm\mu_t, \quad s_t \in \mathcal S, t \in \mathcal T.
\end{equation}
We can see that the above control law has a linear form, i.e., the
action $u^*_t$ is linear to the current state $s_t$. This solution
is exactly the same as the result by \cite{li2000optimal}.

\begin{remark}\rm{
It is observed from (\ref{equ:opt_policy}) that $u_t^*$ depends only
on the initial state $s_0$ and the current state $s_t$ rather than
the history sequence $h_t=\left\{s_0,a_0,\ldots,s_t\right\}$, such
policy $u^*=(u_t^*; t \in \mathcal T)$ may be called a
\textit{semi-Markov} policy \citep{fainberg1982non}. Moreover, if
$\bm e_t$ is time-homogeneous and $e_t^0=1$, then $u_t^*$ is
independent of $t$ and $u^*$ is a \textit{stationary} semi-Markov
policy. }
% This special structural policy relies on the special reward function $r_t=s_{t+1}-s_t$, which results the total reward $R^0=S_T$ depends only on the terminal state $s_T$.
\end{remark}

In addition, although this problem has the analytical form solution
(\ref{equ:opt_policy}), we also implement our
Algorithm~\ref{alg:local} to iteratively solve this problem such
that the convergence capability of Algorithm~\ref{alg:local} can be
validated. In what follows, we use the same experiment setting as
that in Example~2 of \cite{li2000optimal} to verify the above
theoretical and algorithmic results.

% as Theorem \ref{thm:glob} shows.
% \begin{thm}\label{thm:glob}
% In our multi-period mean-variance portfolio selection problem, Algorithm \ref{alg:local} converges to the optimal policy of MV-MDP (\ref{equ:mv_pf}).
% \end{thm}
% \begin{proof}
%    Suppose Algorithm \ref{alg:local} converges to $\hat {\bm y}^*=(\hat y^*_0, s_0 \in \mathcal S )$ and $\hat u^*$, that is, $(\hat {\bm y}^*, \hat u^*)$ is a fixed point, i.e.,
%    \begin{equation}
%        \nonumber
%        \hat J_0^{\hat u^*}(s_0,\hat { y}^*(s_0))=\hat J_0^*(s_0,\hat { y}^*(s_0)), \quad\forall s_0 \in \mathcal S,
%    \end{equation}
%    \begin{equation}
%        \nonumber
%        \hat J_0^{\hat u^*}(s_0,\hat { y}^*(s_0))=    J_0^{\hat u^*}(s_0), \quad\forall s_0 \in \mathcal S.
%    \end{equation}
%    On the other hand, $\hat J_0^*(s_0,y)$ is  quadratic concave with respect to $y$, it must be
%    \begin{equation}
%        \nonumber
%        \hat J_0^*(s_0,\hat y^*_0) =   \max\limits_{y \in \mathcal R}\hat J_0^*(s_0,y), \quad\forall s_0 \in \mathcal S.
%    \end{equation}
%The above three equations imply that
%    \begin{equation}
%        \nonumber
%        J_0^{\hat u^*}(s_0) = \max\limits_{y \in \mathcal R}\hat J_0^*(s_0,y)=J_0^{*}(s_0), \quad\forall s_0 \in \mathcal S.
%    \end{equation}
%    Therefore, $\hat u^*$ is the optimal policy of MV-MDP (\ref{equ:mv_pf}).
% \end{proof}

\begin{example}\label{exam:pf}
An investor has wealth $s_0>0$ at the beginning of the planning
horizon $\mathcal{T}=\left\{0,1,2,3\right\}$. The investor is trying
to find the best allocation of his wealth among three risky
securities (1,2,3) and one riskless security (0). The riskless
security has a constant return rate $e_t^0 \equiv 1.04$ and the
expected return rates of risky securities are
$\mathbbm{E}[e_t^1]=1.162, \mathbbm{E}[e_t^2]=1.246,
\mathbbm{E}[e_t^3]=1.228$. The covariance of $\bm
e_t=(e_t^1,e_t^2,e_t^3)'$ is
 \[
 {\rm Cov}(\bm e_t)=
 \begin{bmatrix}
    0.0146& 0.0187& 0.0145 \\
    0.0187& 0.0854& 0.0104 \\
    0.0145& 0.0104& 0.0289 \\
 \end{bmatrix}
 ,\quad \forall t \in \mathcal{T}.
 \]
The risk aversion coefficient is $\lambda=2$. The investor aims to
find an efficient portfolio policy to maximize the expected return
and minimize the variance of terminal wealth at $T=4$, i.e.,
\begin{equation}
    \nonumber
    \max\limits_{u \in \mathcal U^{\rm HR}} \left\{ \mathbbm{E}_{s_0}^u[s_4] - 2\sigma_{s_0}^u[s_4]\right\}, \quad {\rm given~} s_0.
\end{equation}
\end{example}

We formulate this problem as a finite-horizon MV-MDP and solve it
analytically and numerically, respectively. First, according to the
expectation and covariance of $\bm e_t$, we have
$\bm\mu_t=\mathbbm{E}[\bm Q_t]=[0.122,0.206,0.188]'$ and
  \[
\bm\Sigma_t=\mathbbm{E}[\bm Q_t \bm Q_t'] =
\begin{bmatrix}
    0.0295& 0.0438& 0.0374 \\
    0.0438& 0.1278& 0.0491 \\
    0.0374& 0.0491& 0.0642 \\
\end{bmatrix}
 , \quad \forall t \in \mathcal{T}.
 \]
Based on (\ref{eq_y*s0}) and (\ref{equ:opt_policy}), we obtain
\begin{eqnarray}
& y^*  = 1.1697 s_0 + 8.9751, \nonumber \\
& J^*_0(s_0) = 1.1697 s_0 + 4.4876, \nonumber \\
&u_t^*(s_t) = -\left[
    \begin{array}{l}
        0.4004\\0.6496\\2.3133
    \end{array}
    \right] s_t+1.04^{t-3}\times(1.1699s_0+9.2193)\times
    \left[
    \begin{array}{l}
    0.3887 \\ 0.6240 \\2.2247
    \end{array}
    \right], \quad s_t \in \mathcal S, t \in \mathcal T.
    \nonumber
\end{eqnarray}
This analytical result is exactly the same as that of
\cite{li2000optimal} by taking the initial wealth $s_0=1$.

\begin{figure} [htbp]
    \centering
    %\begin{minipage}[b]{0.65\textwidth}
        \includegraphics[width=0.55\textwidth]{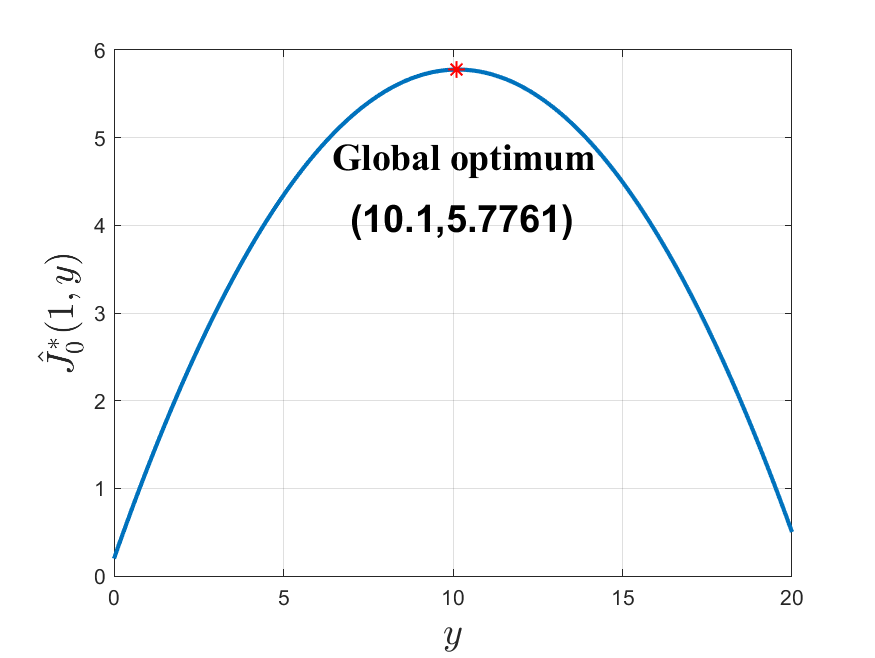}
    %\end{minipage}
    \caption{Illustration of the optimal value of the pseudo mean-variance $\hat J_0^*(1,y)$.
    }
    \label{fig:pmv_pf}
\end{figure}

Next, we suppose that the initial wealth $s_0=1$ and give an
illustration curve of $\hat J_0^*(1,y)$ in Figure~\ref{fig:pmv_pf}
based on the above analytical solution. The maximum is attained at
$y^*=10.1$ with optimal mean-variance value $J_0^*(1)=5.7761$. As a
comparison, we use Algorithm~\ref{alg:local} to iteratively compute
the solution of Example~\ref{exam:pf}. Since this portfolio
selection problem clearly satisfies the conditions in
Theorem~\ref{thm:global}, we expect that Algorithm~\ref{alg:local}
can find the global optimum. To verify the global convergence, we
choose different initial pseudo mean $y^{(0)}$ with values
$2,5,10,12,20$. The convergence results of pseudo mean $y$ and
pseudo mean-variance $\hat J^*_0(1,y)$ are presented in
Figure~\ref{fig:conver_pf}(a)\&(b), respectively. We can see that
pseudo mean $y$ always converges to $10.1$ and pseudo mean-variance
$\hat J^*_0(1,y)$ always converges to $5.7761$ in
Figure~\ref{fig:conver_pf}, which is the same as the analytical
result. Thus, the global convergence of Algorithm~\ref{alg:local} is
demonstrated in this example.
\begin{figure*}[htbp]
    \subfigure[Convergence of pseudo mean.]{
        \begin{minipage}[b]{0.5\textwidth}
            \includegraphics[width=1\textwidth]{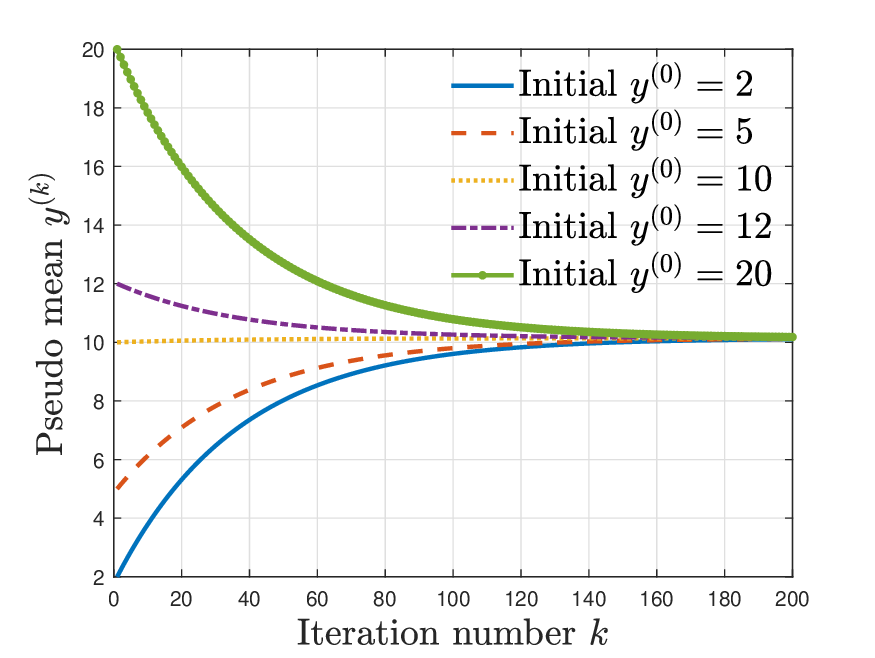}

        \end{minipage}
    }
    \subfigure[Convergence of pseudo mean-variance.]{
        \begin{minipage}[b]{0.5\textwidth}
            \includegraphics[width=1\textwidth]{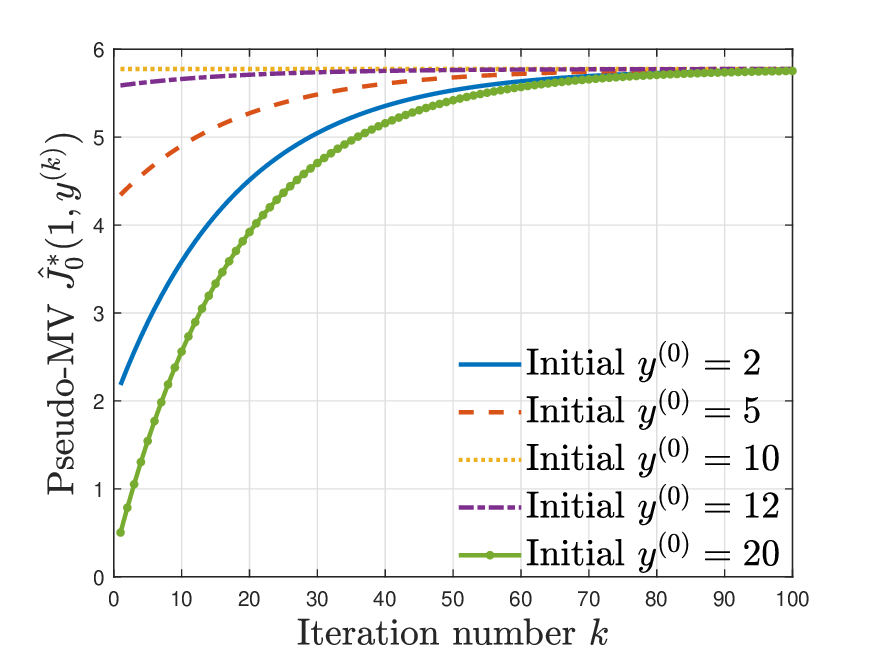}
        \end{minipage}
    }
    \caption{Convergence results of Algorithm~\ref{alg:local} for solving Example~\ref{exam:pf}. } \label{fig:conver_pf}
\end{figure*}

\begin{remark}\label{rem:pf}\rm{
In the multi-period mean-variance portfolio selection problem, due
to the special form of reward function and the linearity of state
transition function, the results in Sections~\ref{sec:opt} and
\ref{sec:alg} are further specified, including the existence of
optimal \emph{semi-Markov deterministic} policies and the \emph{global}
convergence of Algorithm~\ref{alg:local}. Although the method of
\cite{li2000optimal} elegantly solve the multi-period portfolio
selection, it heavily relies on the specific model and is hardly
extended to other problems. In contrast, our approach works for a
general MDP model which has much wider application scenarios since
most of stochastic dynamic systems can be formulated as Markov
models. In the following subsections, we give a preliminary
investigation of applying our approach to study the mean-variance
optimization for queueing control and inventory management, which
demonstrates the applicability of our approach.}

%Different from stochastic control methods that rely on specific
%problem setting, MDP is a fairly generic model to optimize various
%multi-period mean-variance portfolio selection problems, including
%correlated return, uncertain horizon, continuous time, etc.
%Moreover, our approach is generally applicable to any problem
%modeled by MDPs, such as the multi-period mean-variance inventory
%management problem, which is studied in the next subsection.
\end{remark}

\subsection{Mean-Variance Queueing Control}\label{sec:que}

Queueing models are widely used in operations research and
management. In this subsection, we study the mean-variance
optimization of the random costs incurred in queueing systems, which
may reflect the performance and fairness of systems.

We consider a discrete-time $Geo/D/1$ queue in which the arrival is
a geometric process with probability $0<q<1$ and the service is a
deterministic process. In this example, we focus on the workload
process of queueing models
\citep{borovkov2003integral,he2005age,perry2001m}, where the system
workload is the sum of all customers' service requirements. When a
customer arrives with probability $q$, the service requirement
(workload) of that customer is a random variable uniformly
distributed in $[0,X]$. The system state is the total remaining
workload, and the system has a workload capacity $S>0$. At each time
epoch $t$, the remaining system workload $s_t \in [0,S]$ is observed
and the decision maker needs to determine the service rate $a_t \in
[0,A]$. The system has two types of costs, operating cost and
holding cost, which are proportional to service rate and remaining
workload with unit price $c_o$ and $c_h$, respectively. Our
objective is to minimize both the mean and variance of the total
costs over a finite period
$\mathcal{T}=\left\{0,1,\ldots,T-1\right\}$.

We formulate this mean-variance queueing control problem as a
finite-horizon MV-MDP $\mathcal M_{q}=\left\{\mathcal T, \mathcal S,
\mathcal A, \mathcal X, (q_t,t \in \mathcal T),  (r_t,t \in \mathcal
T)\right\}$. %For each epoch $t \in \mathcal T$, state $s_t \in
%\mathcal S := [0,S]$ represents the current remaining workload,
%action $a_t \in \mathcal A:=[0,A]$ denotes the current service rate.
At each time $t \in \mathcal T$, an arriving workload $\xi_t \in
\mathcal X:=[0,X]$ will be generated, with probability density
$q_t(\xi_t=x_t)=\frac{q}{X}$ for $x_t \in (0,X]$ and with
probability $q_t(\xi_t=0)=1-q$. The transition function of system
state (remaining workload) is given by
$s_{t+1}=\min\left\{[s_t-a_t]^++\xi_t,S\right\}$ and the cost
function $c_t(s_t,a_t,\xi_t) =  c_o \cdot a_t + c_h \cdot
\min\left\{[s_t-a_t]^++\xi_t,S\right\} $, where
$[\cdot]^+:=\max\left\{\cdot,0\right\}$. We let
$r_t(s_t,a_t,\xi_t):=-c_t(s_t,a_t,\xi_t)$ as the reward function for
convenience. Our goal is to maximize the combined mean-variance
metric of the total rewards
$R_{0:T}=\sum\limits_{t=0}^{T-1}r_t(s_t,a_t,\xi_t)$, i.e.,
\begin{align*}
J_0^*(s_0) &= \max\limits_{u \in \mathcal U^{\rm HR}} \left\{ \mu^u_0
(s_0)-\lambda \sigma^u_0 (s_0) \right\} \\
&= \max\limits_{u \in \mathcal
U^{\rm HR}} \left\{\mathbbm{E}_{s_0}^{u}[R_{0:T}]-\lambda
\mathbbm{E}_{s_0}^{u}\big[\big(R_{0:T}-\mathbbm{E}_{s_0}^{u}[R_{0:T}]\big)^2\big]\right\}.
\end{align*}
Following the optimization approach in Section~\ref{sec:opt}, we
convert the MV-MDP problem to a bilevel MDP
\begin{align*}
J_0^*(s_0) =\max\limits_{y_0 \in \mathcal{Y}}\max\limits_{u \in
\mathcal U^{\rm HD}}
\mathbbm{E}_{s_0}^{u}\big[R_{0:T}-\lambda\big(R_{0:T}-y_0\big)^2\big] =
\max\limits_{y_0 \in \mathcal{Y}}  \hat J_0^*(s_0,y_0).
\end{align*}
The experiment parameters are set as $T=4, S=10, A=X=1, q=1/2,
c_o=2, c_h=1, \lambda=2$. We aim to solve this problem with the
initial state $s_0 \in [4,6]$. Under this parameter setting, both
the transition function and the reward function are linear to
$s_t,a_t$, i.e.,
\begin{equation}
        \nonumber
        s_{t+1}=s_t-a_t+\xi_t,\quad\forall s_0 \in [4,6],
\end{equation}
\begin{equation}
        \nonumber
        r_t(s_t,a_t,\xi_t) = - c_o \cdot a_t - c_h
        (s_t-a_t+\xi_t),\quad\forall s_0 \in [4,6].
\end{equation}
The convexity of $\mathcal S$ and $\mathcal A$ is obviously satisfied. Therefore,
Algorithm \ref{alg:local} converges to the global optimum by Theorem
\ref{thm:global}. In what follows, we apply Algorithm
\ref{alg:local} numerically to verify the global convergence. Since
the state and action spaces are continuous, we use discretization
technique on these continuous spaces. The discretized fineness is
set as $0.01$. %The algorithms are implemented on a personal
%computer with i5-13500HX Intel 2.5GHz CPU, 16GB RAM, MATLAB R2022b
%computing platform, and Win10 OS.

\begin{figure*}[htbp]
    \subfigure[$s_0=4$]{
        \begin{minipage}[b]{0.31\textwidth}
            \includegraphics[width=1\textwidth]{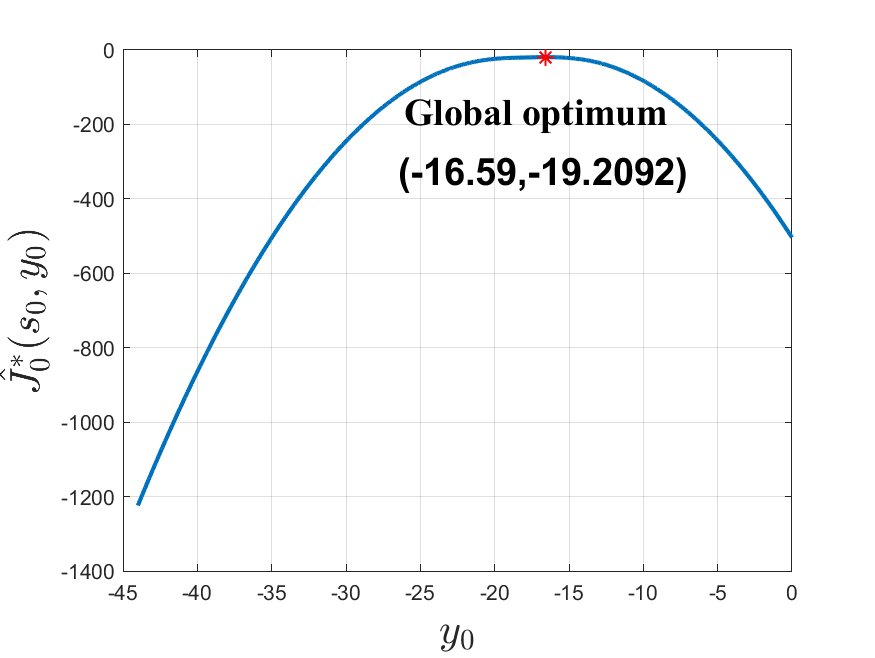}

        \end{minipage}
    }
    \subfigure[$s_0=5$]{
        \begin{minipage}[b]{0.31\textwidth}
            \includegraphics[width=1\textwidth]{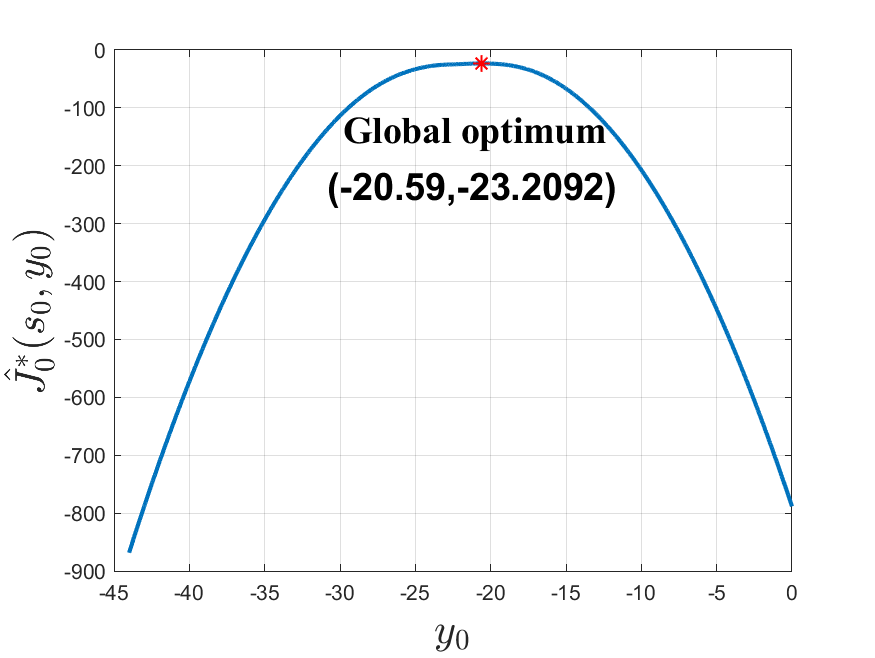}
        \end{minipage}
    }
    \subfigure[$s_0=6$]{
        \begin{minipage}[b]{0.31\textwidth}
            \includegraphics[width=1\textwidth]{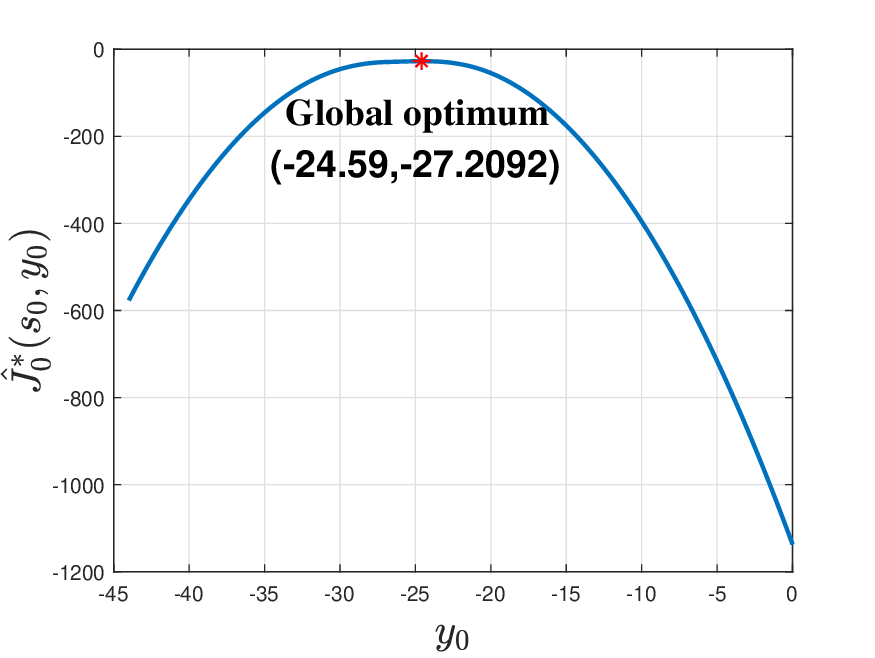}
        \end{minipage}
    }
    \caption{Curves of the optimal pseudo mean-variance $\hat J_0^*(s_0,y_0)$ with respect to $y_0$, computed by the grid search method.} \label{fig:pmv_que}
\end{figure*}
First, we use the grid search method to enumeratively solve the
augmented MDP $\widetilde{\mathcal M}$. It is easy to verify that
$r_t(s,a,\xi) \in [-11,0]$, necessarily $\mathcal Y = [-44,0]$. We
discretize the continuous space $\mathcal Y$ to a discrete space
$\hat{\mathcal Y}$ with the same fineness $0.01$. Thus, we compute
$\hat J_0^*(s_0,y_0)$ by dynamic programming~(\ref{equ:opt_dp}) at
each $y_0 \in \hat{\mathcal Y}$, and choose the  maximum as the
approximate value of $y^*_0$ and $J_0^*(s_0)$.
%The running time is around 2900 seconds.
In Figure~\ref{fig:pmv_que}, we give illustration curves of $\hat
J_0^*(s_0,y_0)$ with respect to $y_0$ at different initial states
$s_0=4,5,6$. We can observe that these curves truly have a single
local optimum that is also globally optimal.

Next, we apply Algorithm~\ref{alg:local} to iteratively solve this
problem. We choose different initial pseudo mean $y_0^{(0)}$ to
verify the global convergence of
Algorithm~\ref{alg:local}. %the stopping rule is set as
%$|y_0^{(k)}-y_0^{(k-1)}|<0.01$ where $y_0^{(k)}$ denotes the $k$-th
%iteration value. The average running time for each iteration is
%approximately 1100 seconds, which is faster than the grid search method.
The convergence processes of Algorithm~\ref{alg:local} under
different initial state $s_0$ and initial pseudo mean $y^{(0)}_0$
are illustrated in Figure~\ref{fig:conver_que}. We observe that
Algorithm~\ref{alg:local} always converges to the global optimum
under different initial values, which verifies
Theorem~\ref{thm:global}. We also observe that
Algorithm~\ref{alg:local} usually converges fast after very few
iterations. Moreover, Figure~\ref{fig:conver_que} indicates that the
optimal pseudo means for initial states $s_0=4,5,6$ are
$y_0^*=-16.59,-20.59,-24.59$, respectively, presenting a linearity
with respect to $s_0$, which also verifies
Theorem~\ref{thm:structure_mv}.

%    \renewcommand{\arraystretch}{1.1}
%    \begin{table}[htbp]
%        \newcommand{\tabincell}[2]{\begin{tabular}{@{}#1@{}}#2\end{tabular}}
%        \centering
%        \fontsize{4}{7}\selectfont
%        \caption{Convergence results of $y^*_0$ in Algorithm~\ref{alg:local} under different initial $y^{(0)}_0$ and~$s_0$.}
%        \label{tab:conver_que}
%        \small
%        \begin{tabular}{|p{20mm}<{\centering}|p{18mm}<{\centering}|p{18mm}<{\centering}|p{18mm}<{\centering}|}
%            \hline
%            %\diagbox
%            {$y_0^{(0)}$}{$y_0^*$}{$s_0$} & \tabincell{c}{$4$} & \tabincell{c}{$5$}
%            & \tabincell{c}{$6$} \\
%            \hline
%            $0$     &$-16.59$     &$-20.59$    &$-24.59$      \\ \hline
%            $-5$    &$-16.59$     &$-20.59$    &$-24.59$    \\ \hline
%            $-10$   &$-16.59$     &$-20.59$    &$-24.59$      \\ \hline
%            $-15$   &$-16.59$     &$-20.59$    &$-24.59$      \\ \hline
%            $-20$   &$-16.59$     &$-20.59$    &$-24.59$      \\ \hline
%            $-25$   &$-16.59$     &$-20.59$    &$-24.59$      \\ \hline
%            $-30$   &$-16.59$     &$-20.59$    &$-24.59$      \\ \hline
%            $-35$   &$-16.59$     &$-20.59$    &$-24.59$      \\ \hline
%            $-40$   &$-16.59$     &$-20.59$    &$-24.59$      \\ \hline
%            $-45$   &$-16.59$     &$-20.59$    &$-24.59$      \\ \hline
%        \end{tabular}
%    \end{table}

    \begin{figure*}[htbp]
        \subfigure[$s_0=4$]{
            \begin{minipage}[b]{0.31\textwidth}
                \includegraphics[width=1\textwidth]{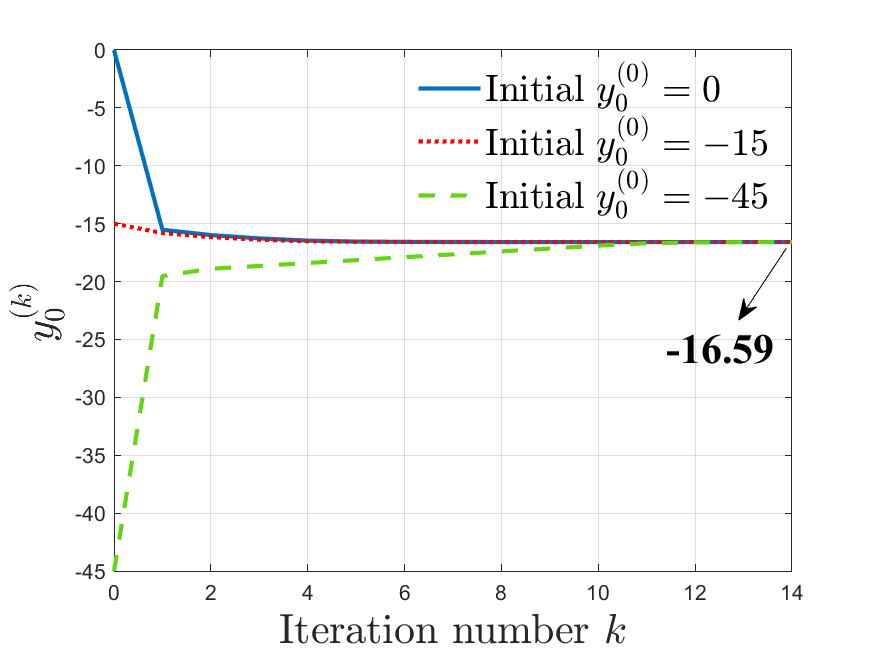}

            \end{minipage}
        }
        \subfigure[$s_0=5$]{
            \begin{minipage}[b]{0.31\textwidth}
                \includegraphics[width=1\textwidth]{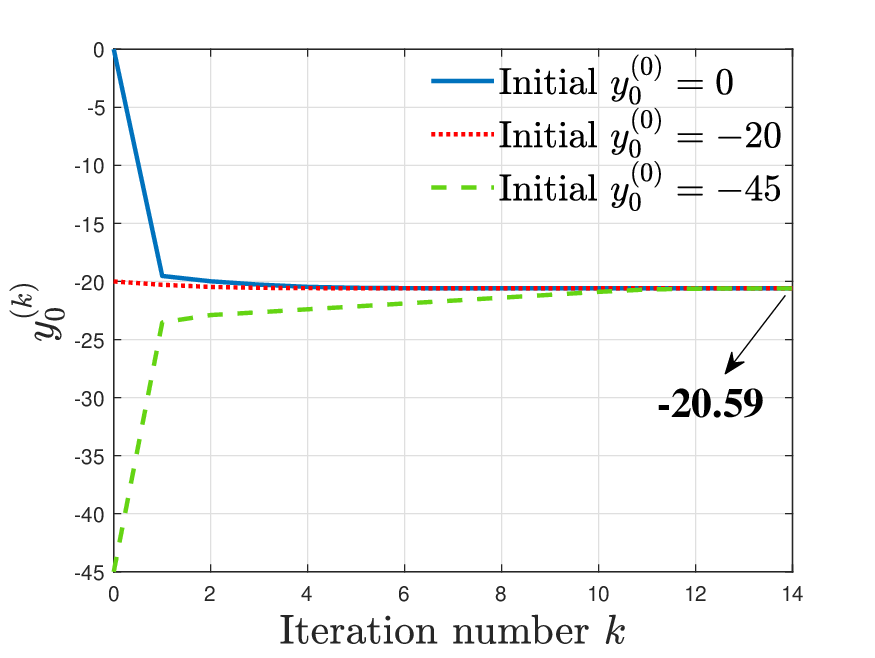}
            \end{minipage}
        }
        \subfigure[$s_0=6$]{
            \begin{minipage}[b]{0.31\textwidth}
                \includegraphics[width=1\textwidth]{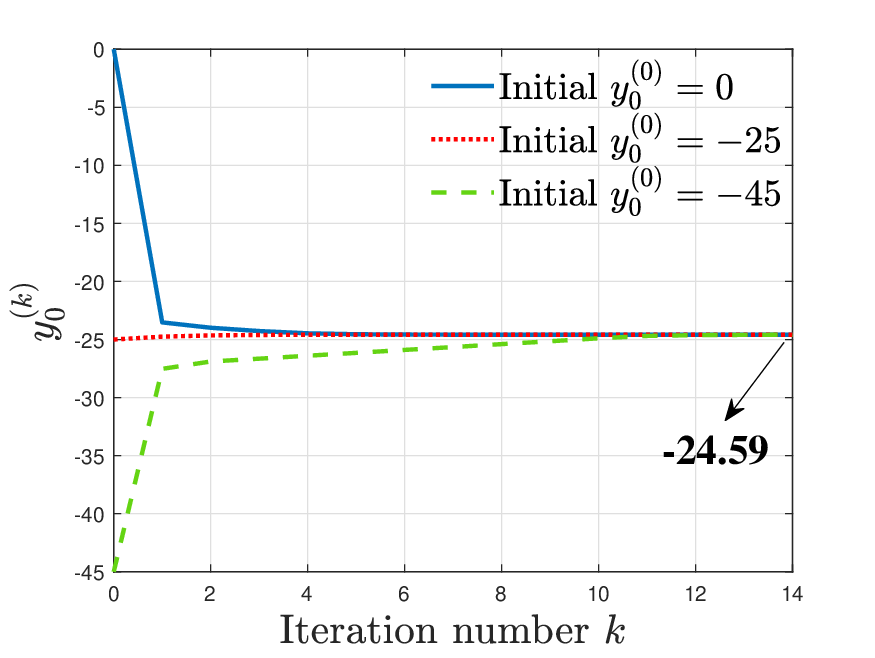}
            \end{minipage}
        }
        \caption{Convergence processes of pseudo mean $y^*_0$ in Algorithm~\ref{alg:local} under different initial values of $y^{(0)}_0$ and $s_0$.} \label{fig:conver_que}
    \end{figure*}

\subsection{Multi-Period Mean-Variance Inventory Management}\label{sec:inventory}

Risk management in dynamic inventory control is a challenging
research topic in the literature \citep{Chen07,Chio2016}. In this
subsection, we demonstrate that our approach can give a promising
avenue to study this problem.

We consider a simple inventory control problem with planned
shortages, non-negative bounded stock, and a maximum capacity $S$.
At each time epoch $t$, the stock level $s_{t} \in
\left\{0,1,\ldots,S\right\}$ is reviewed, an order amount $a_{t} \in
\left\{0,1,\ldots,S-s_t\right\}$ is then restocked, and a stochastic
demand $\xi_{t}$ is realized. Let $p_r$ be the revenue for unit
demand, $c_o$ be the unit order cost, $c_h$ be the unit holding cost
for excess inventory, and $c_s$ be the unit shortage cost for
unfilled demand. These unit parameters are all positive integers
with $c_s>c_o$ and $p_r>c_o$. For convenience, we assume that the
demand variables $\left\{\xi_{t}\right\}$ are independent discrete
random variables uniformly distributed in
$\left\{0,1,\ldots,S\right\}$.
%integer-valued and i.i.d. random variables uniformly distributed in
%$\left\{0,1,\ldots,S\right\}$.
The inventory manager aims to maximize the mean and minimize the
variance of the total return over a finite period $\mathcal
T=\left\{0,1,\ldots,T-1\right\}$.

We formulate this multi-period mean-variance inventory control as a
finite-horizon MV-MDP $\mathcal M_{i}=\left\{\mathcal T, \mathcal S,
\mathcal A, (\mathcal A(s)\subset \mathcal A, s \in \mathcal S), \mathcal X, (q_t,t
\in \mathcal T),  (r_t,t \in \mathcal T)\right\}$. For each time $t
\in \mathcal T$, state $s_t \in \mathcal S :=
\left\{0,1,\ldots,S\right\}$ represents the current stock level, and
action $a_t \in \mathcal A(s_t):=\left\{0,1,\ldots,S-s_t\right\}$
denotes the current order amount. Then a demand $\xi_t \in \mathcal
X:=\left\{0,1,\ldots,S\right\}$ with probability
$q_t(\xi_t=x_t)=\frac{1}{S+1}$ for $x_t \in \mathcal X$ is realized.
The transition function of system state is
$s_{t+1}=[s_t+a_t-\xi_t]^+$ and the reward function is
$r_t(s_t,a_t,\xi_t) = p_r \cdot \xi_t - c_o \cdot a_t - c_h \cdot
[s_t+a_t-\xi_t]^+ - c_s \cdot [\xi_t - s_t-a_t]^+$. The goal is to
maximize the combined mean-variance metric of total rewards
$R_{0:T}=\sum\limits_{t=0}^{T-1}r_t(s_t,a_t,\xi_t)$.
Using the optimization approach in Section~\ref{sec:opt}, we convert
this MV-MDP problem to a bilevel MDP
\begin{equation}
J_0^*(s_0)=\max\limits_{y_0 \in \mathcal{Y}}\max\limits_{u \in
\mathcal U^{\rm HD}}
\mathbbm{E}_{s_0}^{u}\big[R_{0:T}-\lambda\big(R_{0:T}-y_0\big)^2\big]=\max\limits_{y_0
\in \mathcal{Y}}  \hat J_0^*(s_0,y_0).
\end{equation}
The experiment parameters are set as $T=10, S=10, p_r=4, c_o=2,
c_h=1, c_s=3, \lambda=2$.  Under this parameter setting, it is easy
to verify that $r_t(s,a,\xi) \in [-30,40]$, necessarily $\mathcal Y
= [-300,400]$. Thus, the maximum of $\hat J_0^*(s_0,y_0)$ must be
attained with $y_0 \in [-300,400]$. We apply both the grid search
method and Algorithm~\ref{alg:local} to solve this problem.
%The algorithms are implemented on a personal computer with i5-13500HX Intel 2.50GHz CPU,
% 16.0 GB RAM, MATLAB R2022b computing platform, and Win10 OS.

First, we use the grid search method to enumeratively solve the
inner pseudo MV-MDPs at every possible $y_0 \in \mathcal Y$. For
easy computation, we discretize the continuous space $\mathcal Y$ to
a discrete space $\hat{\mathcal Y}$ with fineness $0.1$. Thus, we
compute $\hat J_0^*(s_0,y_0)$ by dynamic
programming~(\ref{equ:opt_dp}) at each $y_0 \in \hat{\mathcal Y}$,
and choose the maximum as the approximate value of $y^*_0$ and
$J_0^*(s_0)$. %The running time is approximately 27.52 seconds.
As a consequence, we give illustration curves of $\hat
J_0^*(s_0,y_0)$ with respect to $y_0$ at different initial states
$s_0$, which are shown in Figure~\ref{fig:pmv_inv}.
\begin{figure*}[t]
    \subfigure[$s_0=0$]{
        \begin{minipage}[b]{0.22\textwidth}
            \includegraphics[width=1\textwidth]{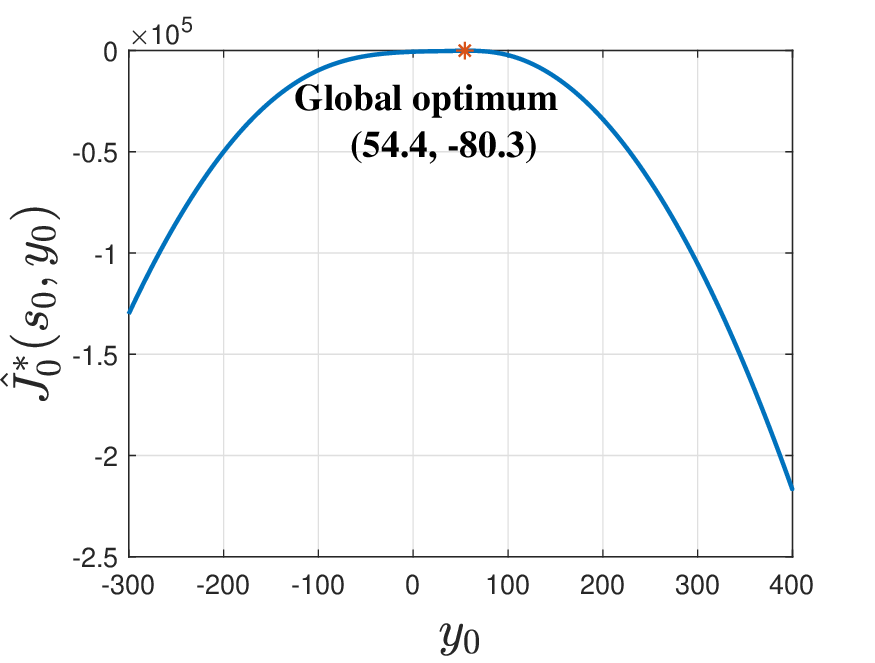}

        \end{minipage}
    }
    \subfigure[$s_0=1$]{
        \begin{minipage}[b]{0.22\textwidth}
            \includegraphics[width=1\textwidth]{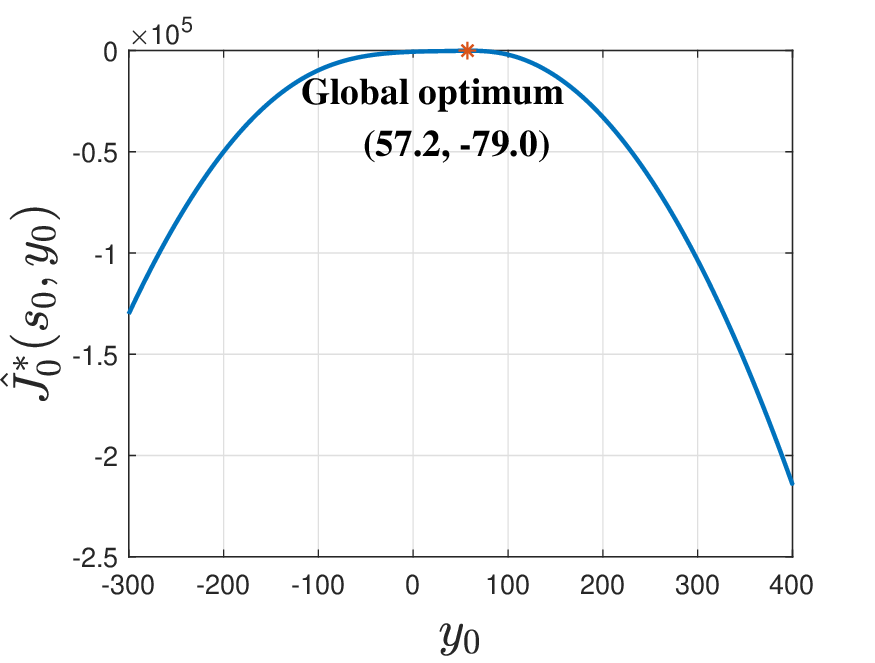}
        \end{minipage}
    }
    \subfigure[$s_0=2$]{
        \begin{minipage}[b]{0.22\textwidth}
            \includegraphics[width=1\textwidth]{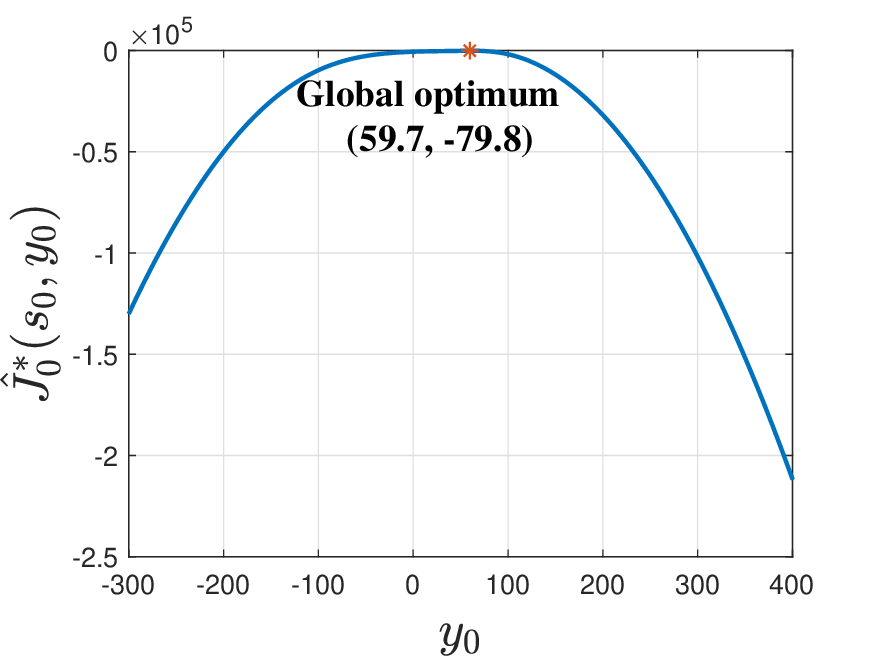}
        \end{minipage}
    }
    \subfigure[$s_0=3$]{
        \begin{minipage}[b]{0.22\textwidth}
            \includegraphics[width=1\textwidth]{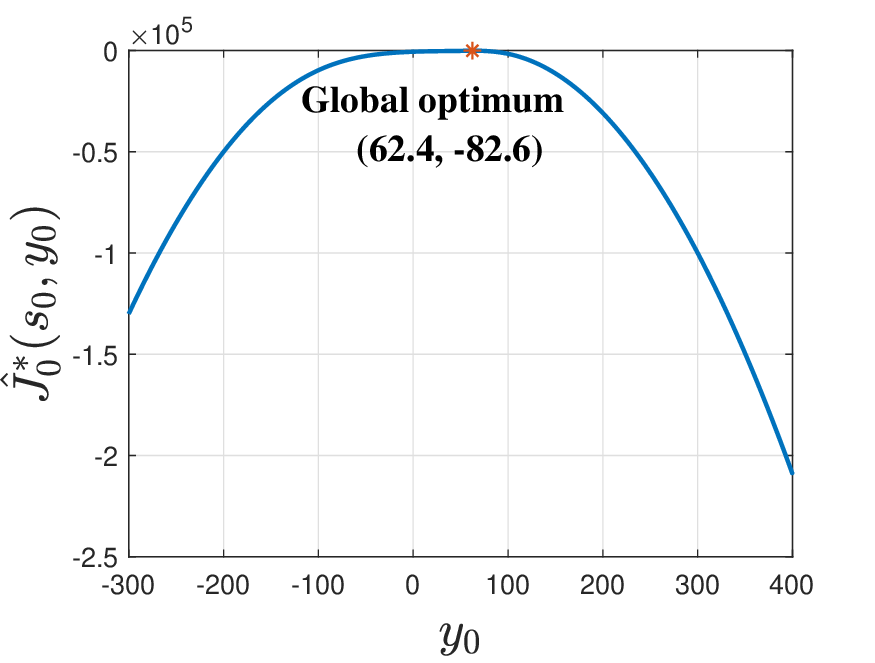}
        \end{minipage}
    }

    \subfigure[$s_0=4$]{
        \begin{minipage}[b]{0.22\textwidth}
            \includegraphics[width=1\textwidth]{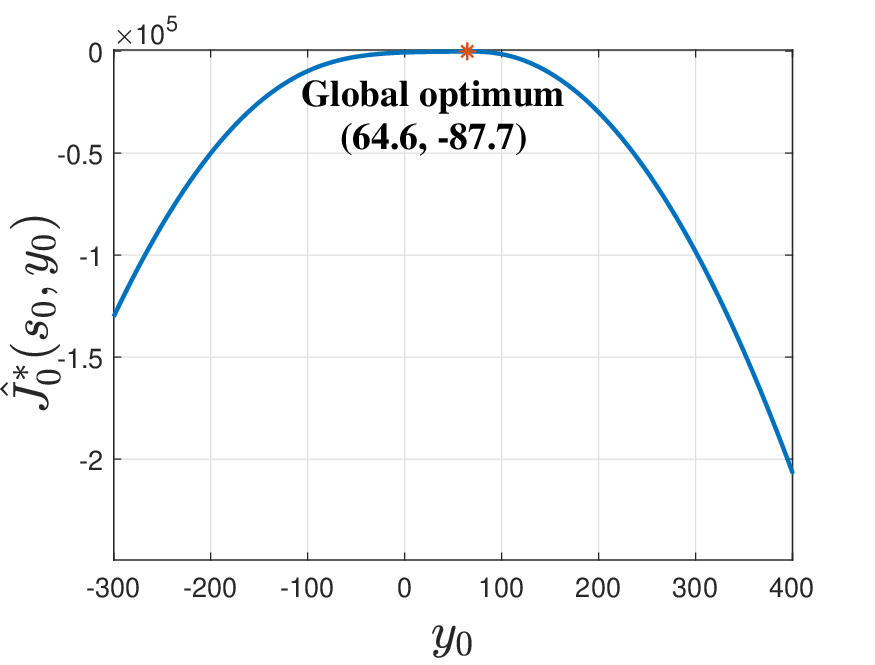}
        \end{minipage}
    }
    \subfigure[$s_0=5$]{
        \begin{minipage}[b]{0.22\textwidth}
            \includegraphics[width=1\textwidth]{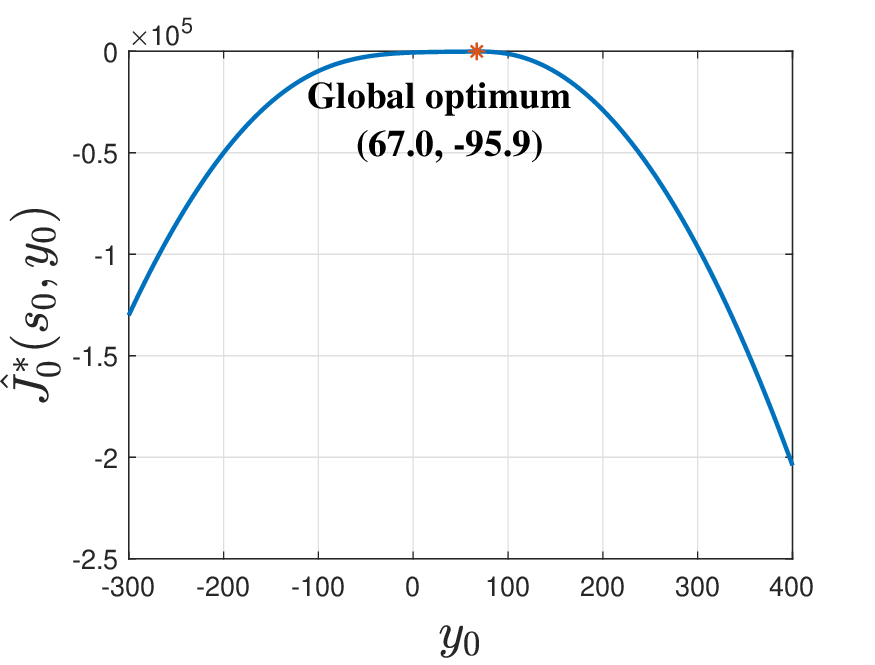}
        \end{minipage}
    }
    \subfigure[$s_0=6$]{
        \begin{minipage}[b]{0.22\textwidth}
            \includegraphics[width=1\textwidth]{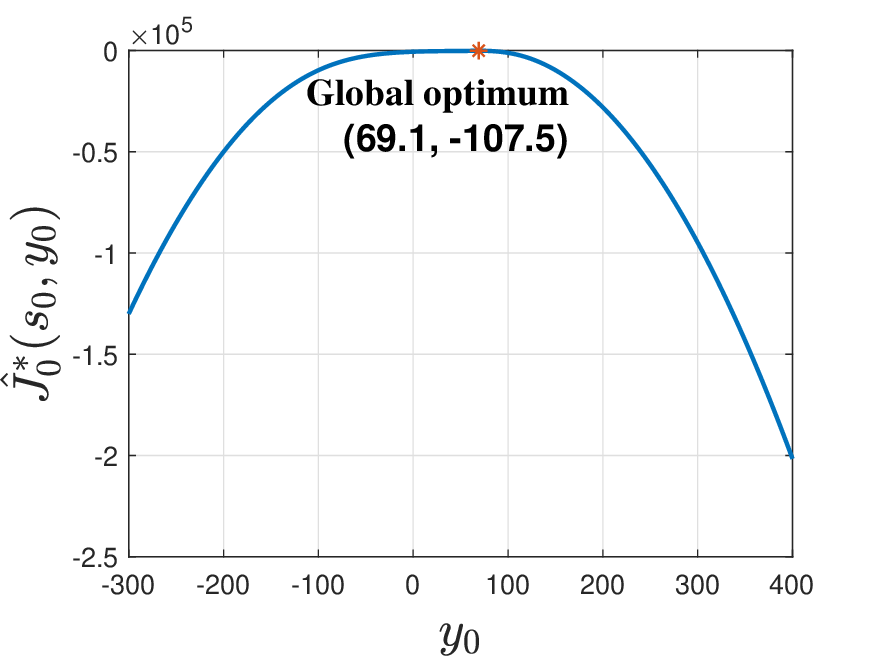}
        \end{minipage}
    }
    \subfigure[$s_0=7$]{
        \begin{minipage}[b]{0.22\textwidth}
            \includegraphics[width=1\textwidth]{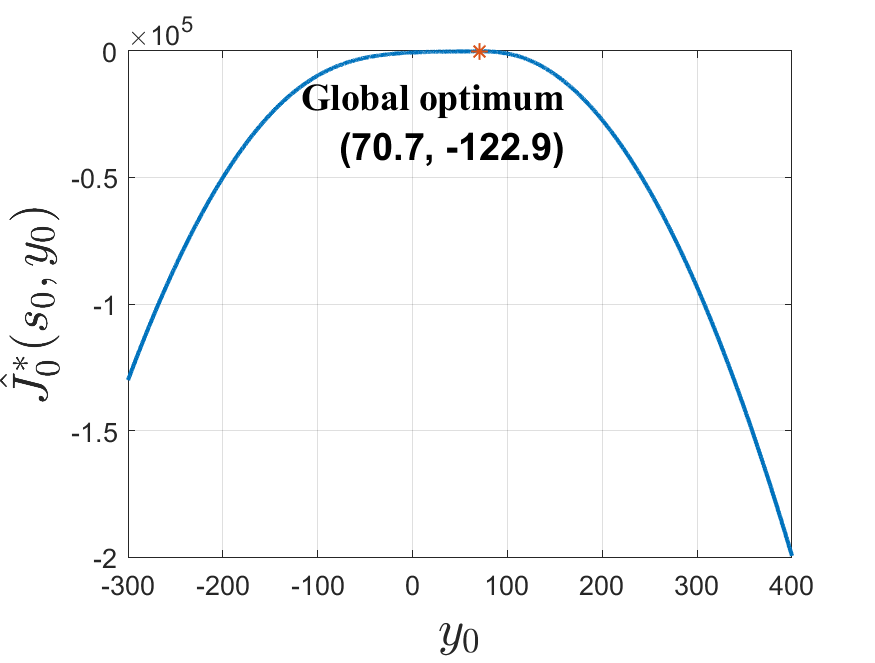}
        \end{minipage}
    }
    \subfigure[$s_0=8$]{
        \begin{minipage}[b]{0.22\textwidth}
            \includegraphics[width=1\textwidth]{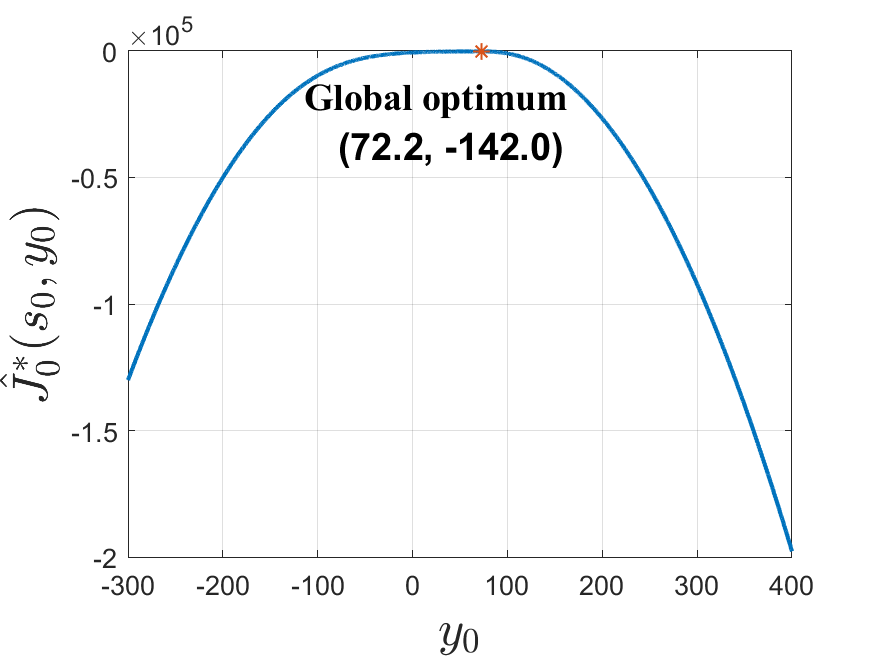}
        \end{minipage}
    }
    \subfigure[$s_0=9$]{
        \begin{minipage}[b]{0.22\textwidth}
            \includegraphics[width=1\textwidth]{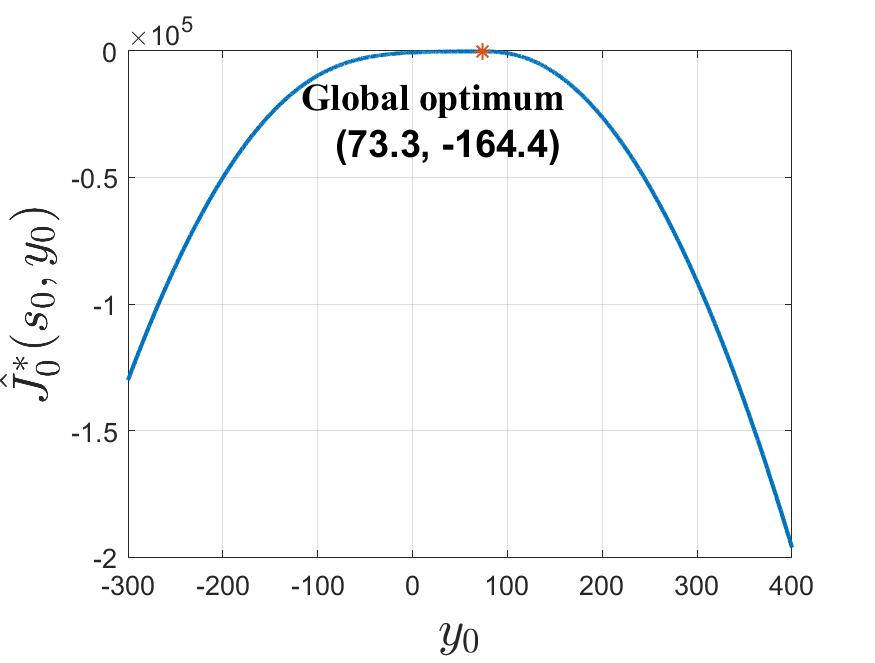}
        \end{minipage}
    }
    \subfigure[$s_0=10$]{
        \begin{minipage}[b]{0.22\textwidth}
            \includegraphics[width=1\textwidth]{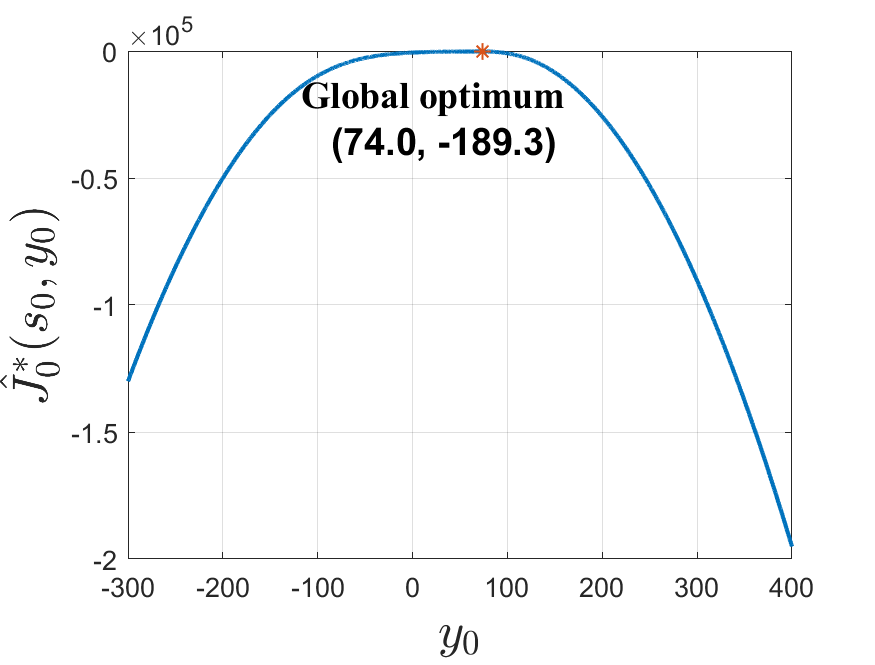}
        \end{minipage}
    }
    \caption{Curves of the optimal pseudo mean-variance $\hat J_0^*(s_0,y_0)$ with respect to $y_0 \in
        [-300,400]$, computed by the grid search method.} \label{fig:pmv_inv}
\end{figure*}

Note that there actually exist multiple local optima on the curves
of Figure~\ref{fig:pmv_inv}, but their values are quite close
(please refer to the refined illustration in
Figure~\ref{fig:refine_inv}), which also hints that local optimum is
usually good enough in practice. The optimal value function $J_0^*$
and the corresponding mean $y_0^*$ and variance $\sigma_0^*$ at each
initial state $s_0$ are presented in Table~\ref{tab:glo_opt}, where
we observe that the optimal mean and variance are both increasing in
the initial stock~$s_0$.
\renewcommand{\arraystretch}{1.0}
 \begin{table}[htbp]
    \newcommand{\tabincell}[2]{\begin{tabular}{@{}#1@{}}#2\end{tabular}}
    \centering
    \fontsize{4}{7}\selectfont
  % \small
    \caption{Global optima of the mean-variance inventory management problem by grid search.}
    \label{tab:glo_opt}
    \small
\begin{tabular}{|p{10mm}<{\centering}|p{9mm}<{\centering}|p{9mm}<{\centering}|p{9mm}<{\centering}|p{9mm}<{\centering}|p{9mm}<{\centering}|p{9mm}<{\centering}|p{10mm}<{\centering}|p{10mm}<{\centering}|p{10mm}<{\centering}|p{10mm}<{\centering}|p{10mm}<{\centering}|}
        \hline
        $s_0$ & \tabincell{c}{$0$}& \tabincell{c}{$1$} & \tabincell{c}{$2$}
        & \tabincell{c}{$3$}& \tabincell{c}{$4$} & \tabincell{c}{$5$} & \tabincell{c}{$6$} & \tabincell{c}{$7$}
        & \tabincell{c}{$8$}& \tabincell{c}{$9$} & \tabincell{c}{$10$} \\
        \hline
        $y_0^*$&54.4    &57.2    &59.7   &62.4    &64.6    &67.0  &69.1 &70.7 & 72.2 &73.3 &74.0    \\ \hline
        $\sigma_0^*(s_0)$&$67.35$    &$68.1$   &$69.75$  &$72.5$    &$76.15$    &$81.45$  &$88.3$ &$96.8$ & $107.1$ &$118.85$ &$131.65$    \\ \hline
        $J_0^*(s_0)$&$-80.3$    &$-79.0$   &$-79.8$  &$-82.6$    &$-87.7$    &$-95.9$  &$-107.5$ &$-122.9$ & $-142.0$ &$-164.4$ &$-189.3$    \\ \hline
\end{tabular}
\end{table}

Next, we apply Algorithm~\ref{alg:local} to iteratively solve this
problem. We choose different initial pseudo mean $y_0^{(0)}$ with
values $-500,-50,0,60,500$ to study the convergence of
Algorithm~\ref{alg:local}, which is illustrated by
Figure~\ref{fig:conver_inv}.
We observe that Algorithm~\ref{alg:local} always converges, but may
converge to different optima in some cases. Specifically, for
initial states $s_0=0,1,2,3,4$, Algorithm~\ref{alg:local} always
converges to the global optimum, while for initial states
$s_0=5,6,7,8,9,10$, it may not under some initial pseudo mean
$y_0^{(0)}$. In order to further verify whether the convergence
points are local optima, we choose three initial states $s_0=5,8,10$
and refine the illustration of pseudo mean-variance $\hat
J_0^*(s_0,y_0)$ in the neighborhood of the convergence points, as
illustrated in Figure~\ref{fig:refine_inv}. The curves (in numerical
values) show that all the convergence points are truly local optima.
It is also observed from Figure~\ref{fig:conver_inv} that when we
choose $y_0^{(0)}=500$, Algorithm~\ref{alg:local} always converges
to the global optimum from every initial state.

\begin{figure*}[htbp]
    \subfigure[$s_0=0$]{
        \begin{minipage}[b]{0.22\textwidth}
            \includegraphics[width=1\textwidth]{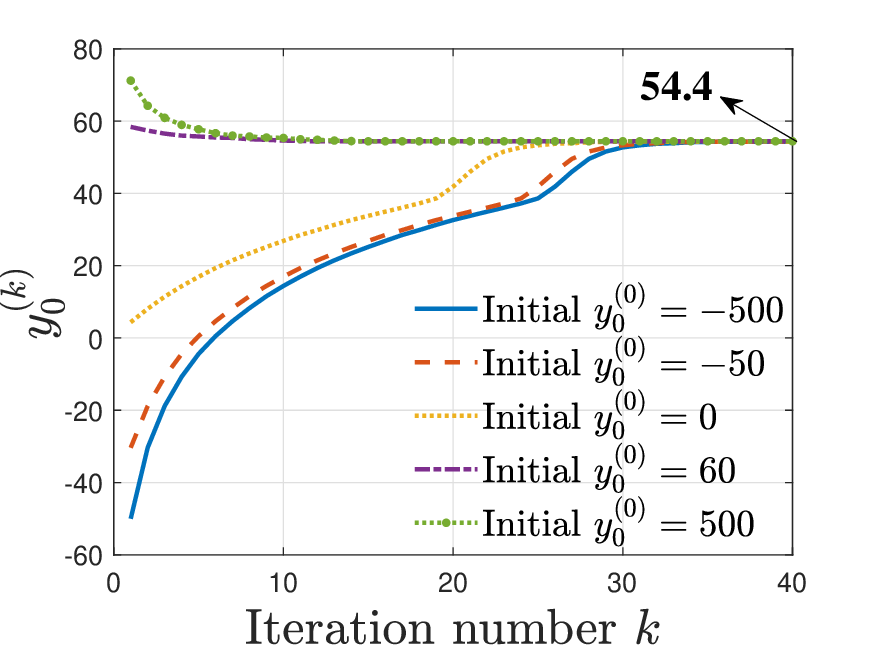}

        \end{minipage}
    }
    \subfigure[$s_0=1$]{
        \begin{minipage}[b]{0.22\textwidth}
            \includegraphics[width=1\textwidth]{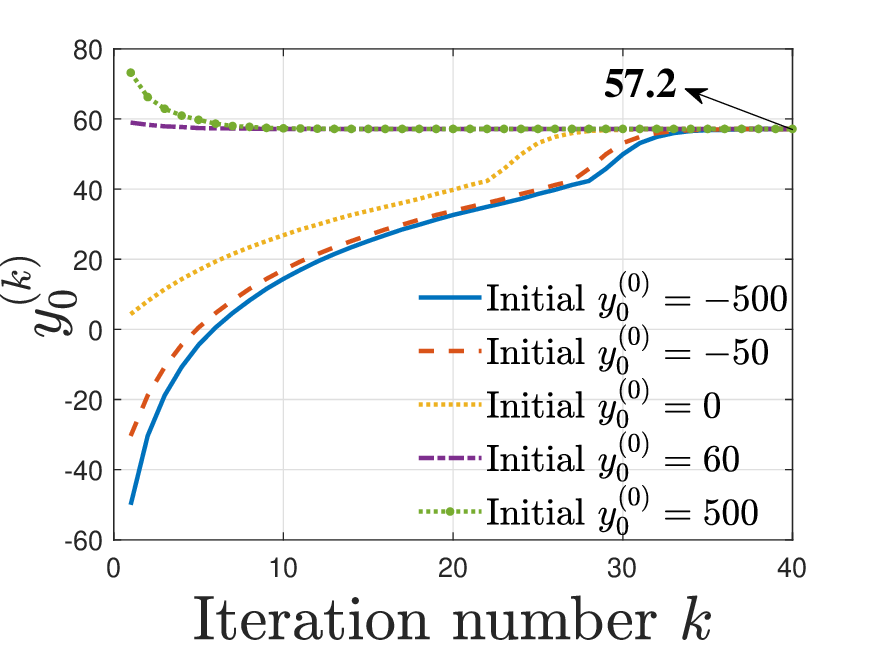}
        \end{minipage}
    }
    \subfigure[$s_0=2$]{
        \begin{minipage}[b]{0.22\textwidth}
            \includegraphics[width=1\textwidth]{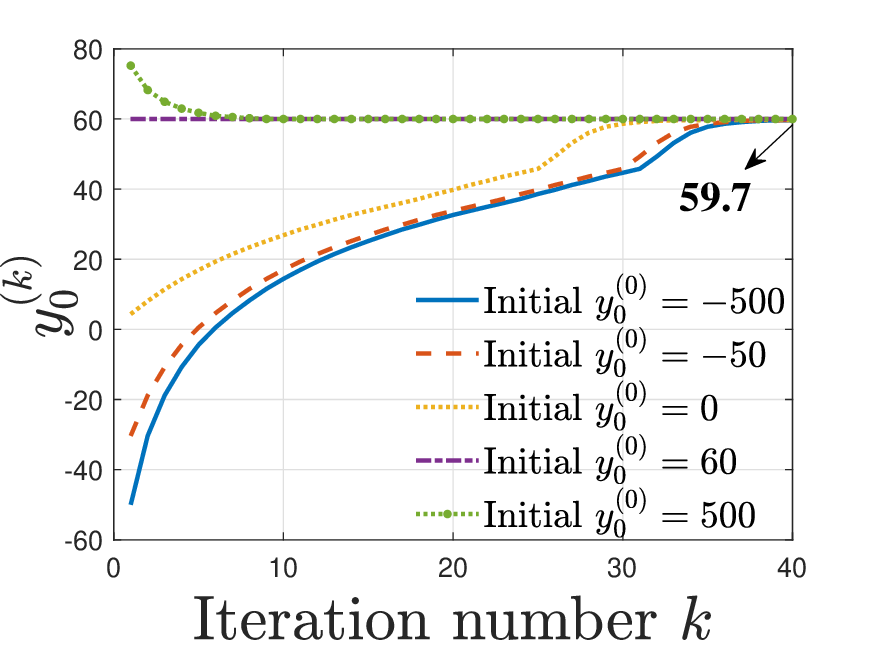}
        \end{minipage}
    }
     \subfigure[$s_0=3$]{
        \begin{minipage}[b]{0.22\textwidth}
            \includegraphics[width=1\textwidth]{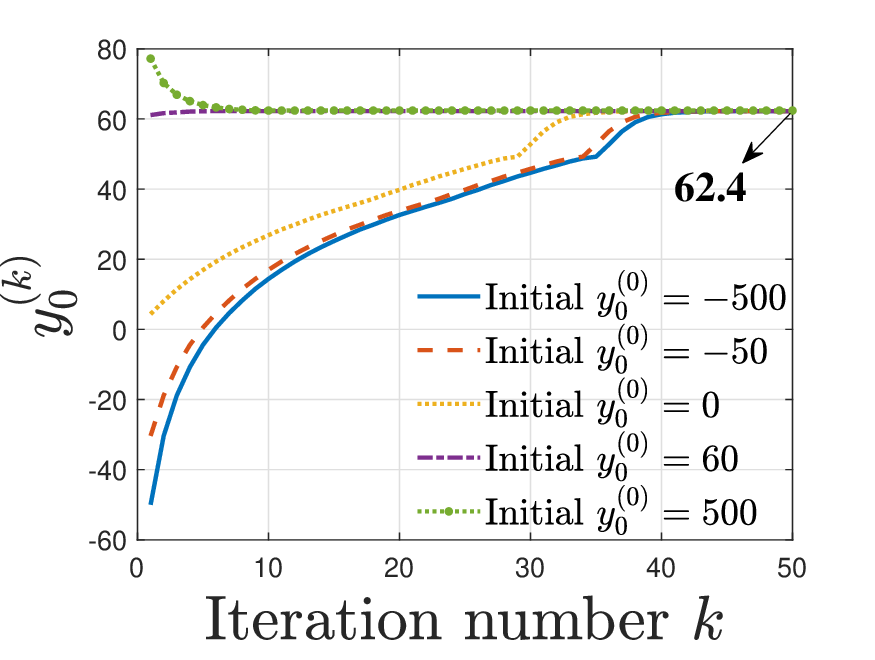}
        \end{minipage}
    }
     \subfigure[$s_0=4$]{
        \begin{minipage}[b]{0.22\textwidth}
            \includegraphics[width=1\textwidth]{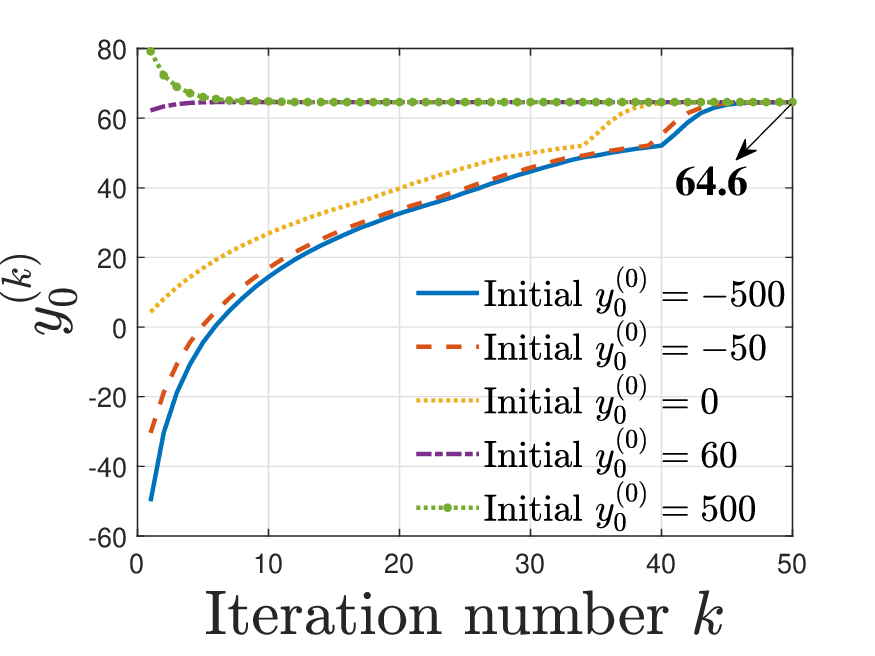}
        \end{minipage}
    }
     \subfigure[$s_0=5$]{
        \begin{minipage}[b]{0.22\textwidth}
            \includegraphics[width=1\textwidth]{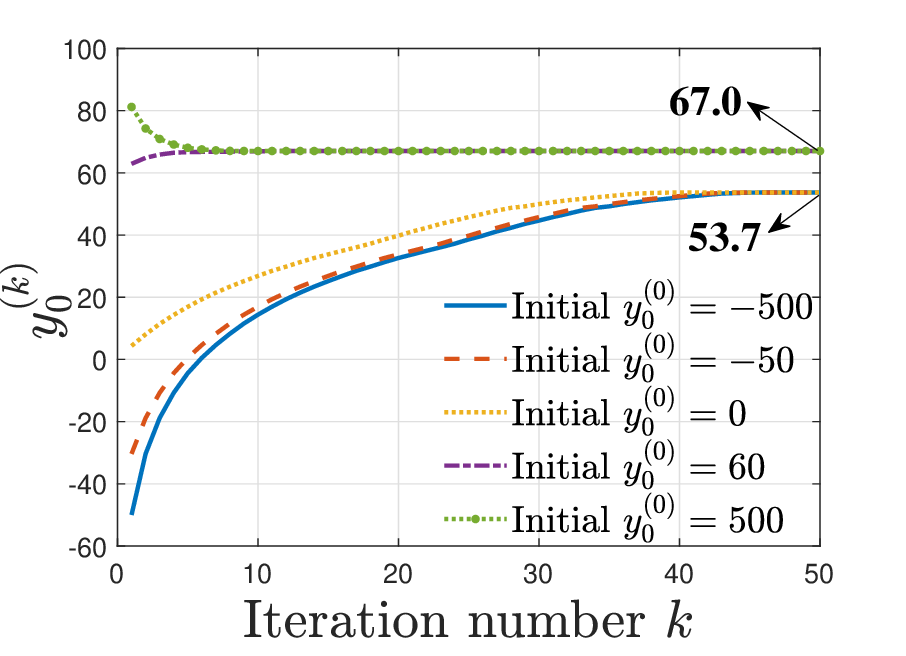}
        \end{minipage}
    }
     \subfigure[$s_0=6$]{
        \begin{minipage}[b]{0.22\textwidth}
            \includegraphics[width=1\textwidth]{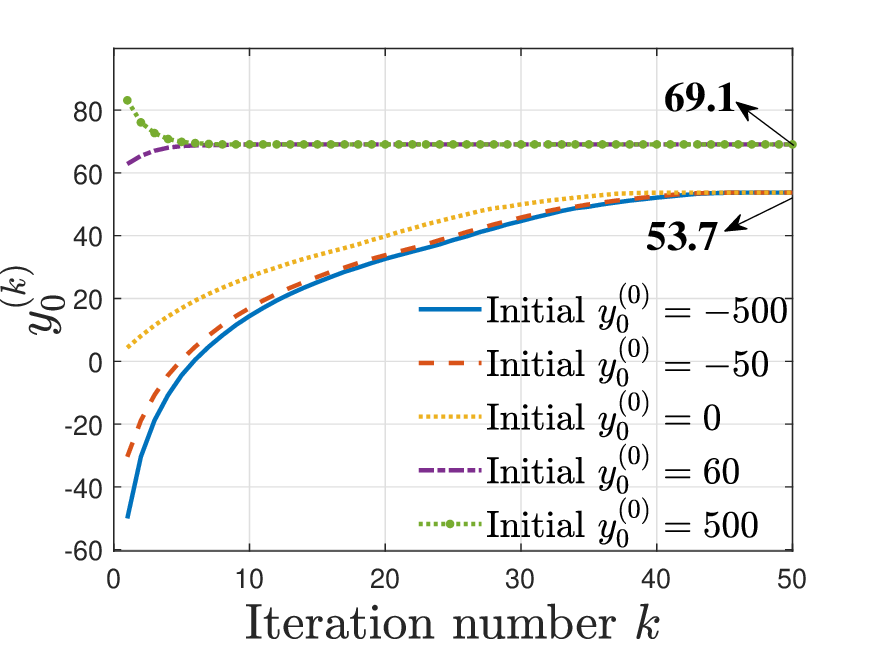}
        \end{minipage}
    }
     \subfigure[$s_0=7$]{
        \begin{minipage}[b]{0.22\textwidth}
            \includegraphics[width=1\textwidth]{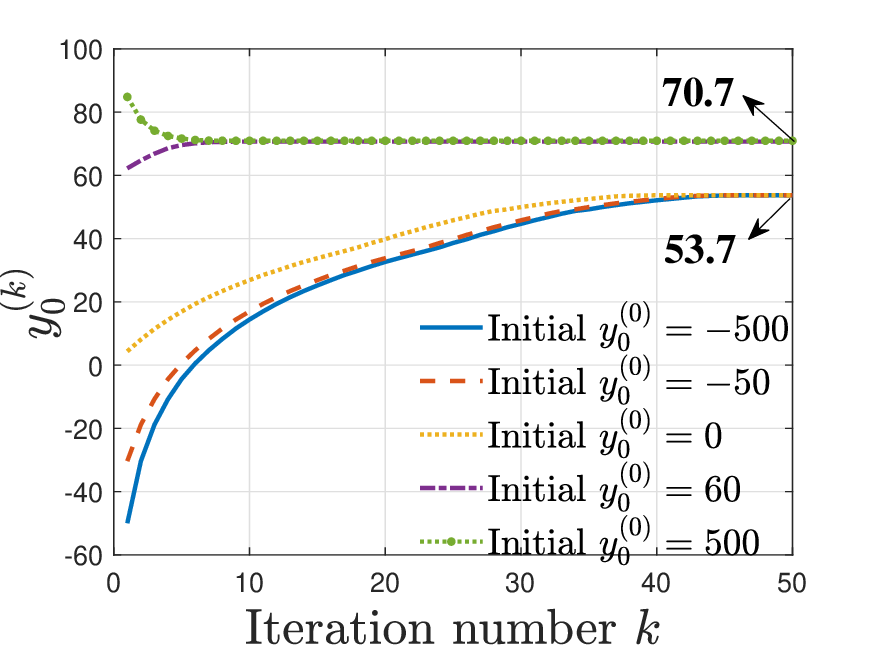}
        \end{minipage}
    }
     \subfigure[$s_0=8$]{
        \begin{minipage}[b]{0.22\textwidth}
            \includegraphics[width=1\textwidth]{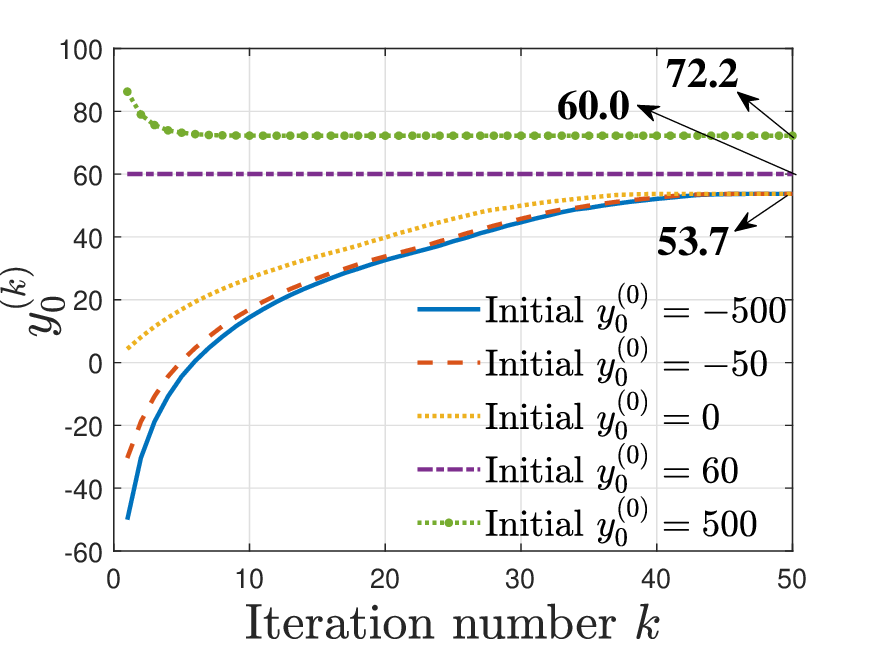}
        \end{minipage}
    }
     \subfigure[$s_0=9$]{
        \begin{minipage}[b]{0.22\textwidth}
            \includegraphics[width=1\textwidth]{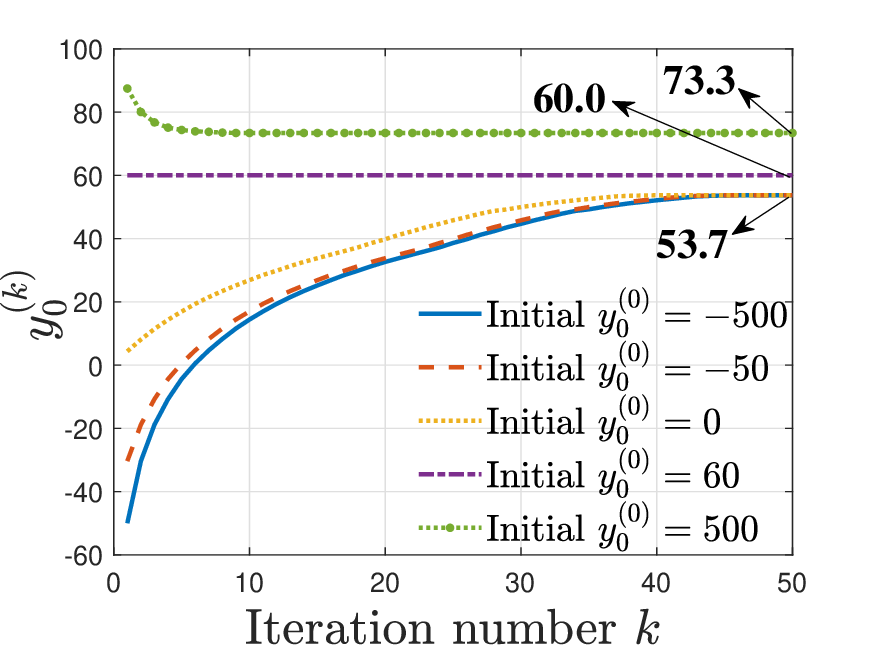}
        \end{minipage}
    }
     \subfigure[$s_0=10$]{
        \begin{minipage}[b]{0.22\textwidth}
            \includegraphics[width=1\textwidth]{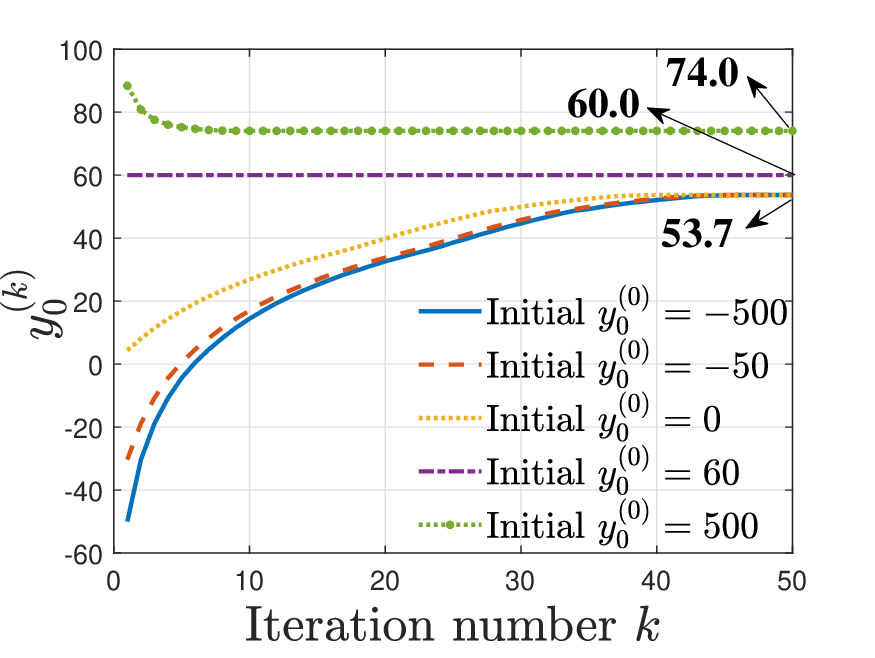}
        \end{minipage}
    }
    \caption{Convergence processes of pseudo mean $y^*_0$ in Algorithm~\ref{alg:local} under different initial values of $y^{(0)}_0$ and $s_0$. } \label{fig:conver_inv}
\end{figure*}

\begin{figure*}[htbp]
    \subfigure[$s_0=5$]{
        \begin{minipage}[b]{0.31\textwidth}
            \includegraphics[width=1\textwidth]{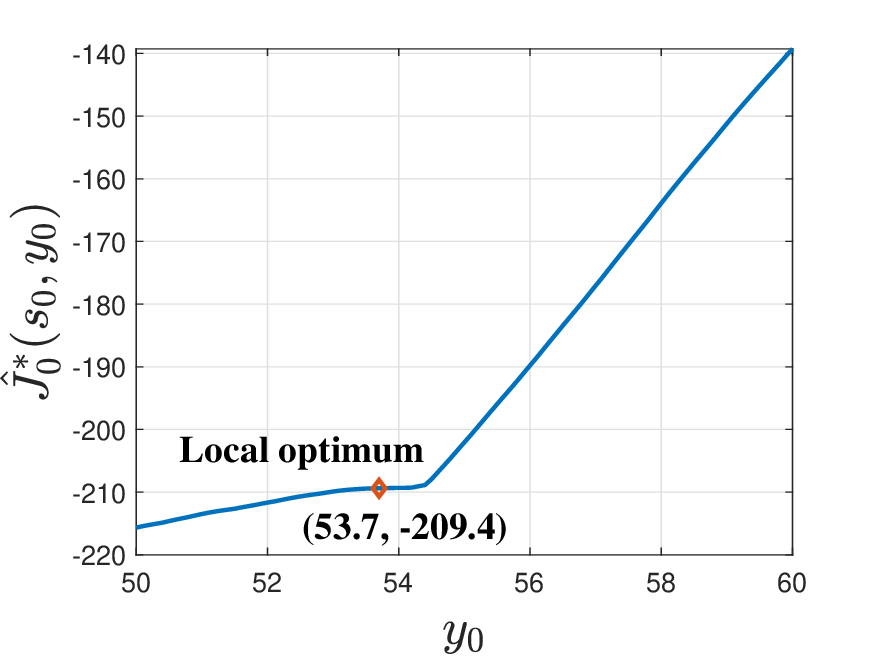}
        \end{minipage}
    }
    \subfigure[$s_0=8$]{
        \begin{minipage}[b]{0.31\textwidth}
            \includegraphics[width=1\textwidth]{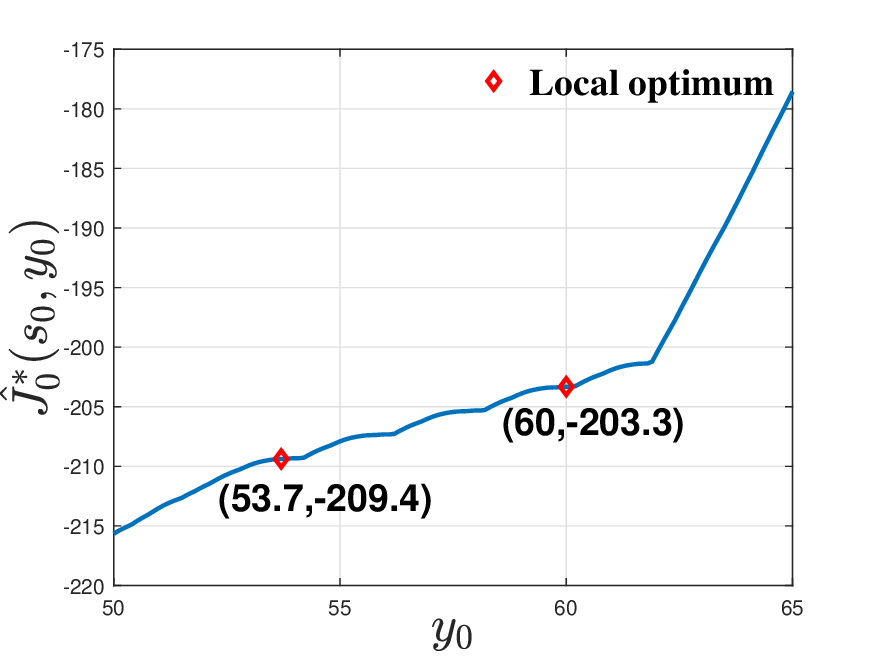}
        \end{minipage}
    }
    \subfigure[$s_0=10$]{
        \begin{minipage}[b]{0.31\textwidth}
            \includegraphics[width=1\textwidth]{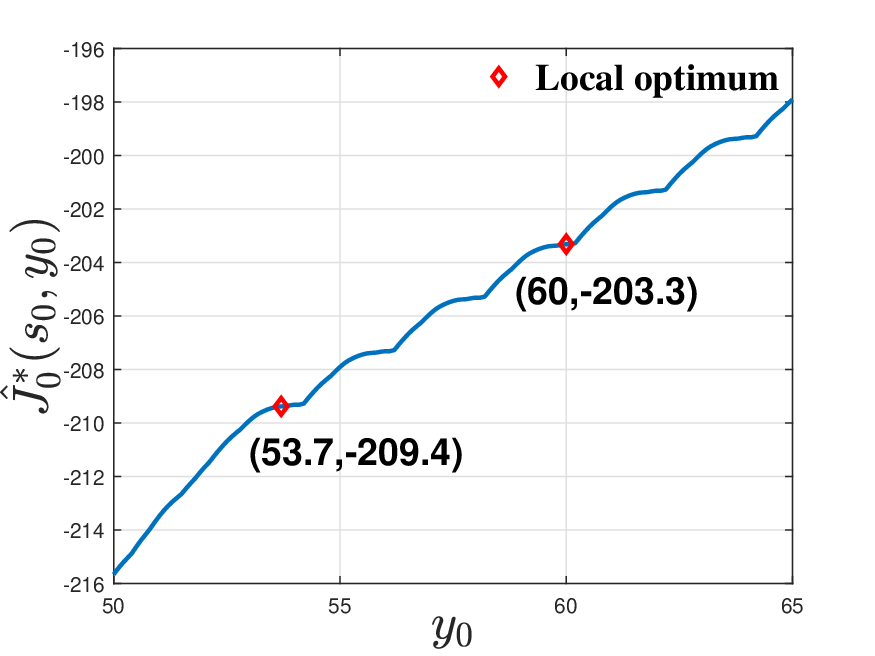}
        \end{minipage}
    }
    \caption{Refined illustration of pseudo mean-variance $\hat J_0^*(s_0,y_0)$ with respect to
        $y_0$ under initial states $s_0=5,8,10$.} \label{fig:refine_inv}
\end{figure*}

The numerical results in this example demonstrate that
Algorithm~\ref{alg:local} can find locally optimal policies of
multi-period mean-variance inventory management problem. We may
further find the globally optimal policy by taking proper initial
values or using some perturbation techniques widely adopted in
evolutionary algorithms. Moreover, this example also shows that
these local optima have quite close values, which hints that even
the local convergence of Algorithm~\ref{alg:local} may be good
enough in practical applications.

\section{Conclusion}\label{sec:con}
In this paper, we study the optimization and algorithm for
finite-horizon discrete-time MDPs with a mean-variance optimality
criterion. The objective is to maximize the combined mean-variance
metric of accumulated rewards among history-dependent randomized
policy space. By introducing concepts called pseudo mean and pseudo variance, we convert the MV-MDP to a bilevel MDP, where the inner pseudo MV-MDP is equivalent to a standard
finite-horizon MDP with an augmented state space and the outer level
is a single parameter optimization problem with respect to the
pseudo mean. The properties of this MV-MDP, including the optimality
of history-dependent deterministic policies and the piecewise
quadratic concavity of the optimal values of inner MDPs with respect
to the pseudo mean, are derived. Based on these properties, we
develop a policy iteration type algorithm to effectively solve this
finite-horizon MV-MDP, which alternatingly optimizes the inner
policy and the outer pseudo mean. The convergence and the local
optimality of the algorithm are proved. We further derive a
sufficient condition under which our algorithm can converge to the
global optimum. Finally, we apply this approach to study the
mean-variance optimization of multi-period portfolio selection,
queueing control, and inventory management, which demonstrate that
our approach can find the optimum effectively.

One of the future research topics is to extend the global
convergence condition with the help of sensitivity analysis on
pseudo mean. On the other hand, it is of significance to further
study infinite-horizon MV-MDPs, including discounted MV-MDPs and
limiting average MV-MDPs, i.e., the mean-variance optimization of
discounted accumulated rewards and the limiting average
mean-variance optimization of total accumulated rewards. Moreover,
the combination of our approach with the technique of reinforcement
learning is also a promising research topic, which can contribute to
develop a framework of data-driven risk-sensitive decision making.

{}

\newpage

\begin{appendices}

\section{Proof of Theorems}

\subsection{Proof of Theorem~\ref{thm:relation}}
\begin{proof}
    Given $y_0 \in \mathbb{R}$ and
    $\tilde{u}=(\tilde{u}_t;t \in \mathcal T) \in \tilde{\mathcal U}^{\rm HR}$,
    we define a policy $u = (u_t;t \in \mathcal T) \in \mathcal U^{\rm HR}$
    as follows.
    \begin{eqnarray*}
        u_0(\cdot|s_0) &:=& \tilde{u}_0(\cdot|s_0,y_0),\\
        u_1(\cdot|s_0,a_0,s_1) &:=& \tilde{u}_1(\cdot|s_0,y_0,a_0,s_1,y_0-r_0(s,a_0)),\\
        &\cdots& \\
        u_t(\cdot|s_0,a_0,\ldots,s_t) &:=& \tilde{u}_t(\cdot|s_0,y_0,a_0,s_1,y_0-r_0(s,a_0),a_1,\ldots,s_t,y_0-\sum\limits_{\tau=0}^{t-1}r_\tau(s_\tau,a_\tau)).
        %&\cdots
    \end{eqnarray*}
    In this sense, the two policies $u$ and $\tilde{u}$ share the same
    decision rule, which implies (\ref{equ:relation}).

    Based on (\ref{equ:relation}), it holds for each $(s_0,y_0) \in
    \tilde{\mathcal{S}}$ and $\tilde{u} \in \tilde{\mathcal U}^{\rm HR}$ that
    \begin{equation*}
        V_0^{\tilde{u}}(s_0,y_0) = \hat J_0^{u}(s_0,y_0) \le
        \sup\limits_{u \in \mathcal U^{\rm HR}} \hat J_0^{u}(s_0,y_0) = \hat
        J_0^{*}(s_0,y_0).
    \end{equation*}
    On the other hand, since the policy space $\tilde{\mathcal U}^{\rm HR}$
    contains $\mathcal U^{\rm HR}$, necessarily we have
    \begin{equation*}
        V_0^{*}(s_0,y_0) = \sup\limits_{\tilde{u} \in \tilde{\mathcal
                U}^{\rm HR}}  V_0^{\tilde{u}}(s_0,y_0) \ge \sup\limits_{u \in \mathcal
            U^{\rm HR}}  V_0^{u}(s_0,y_0) = \sup\limits_{u \in \mathcal U^{\rm HR}} \hat
        J_0^{u}(s_0,y_0) = \hat J_0^{*}(s_0,y_0).
    \end{equation*}
    The above two inequalities lead to (\ref{equ:opt_relation}).
\end{proof}

\subsection{Proof of Theorem~\ref{thm:opt}}
\begin{proof}
    The results of \eqref{equ:opt_dp} and part~(a) are derived directly
    by following Theorems~4.3.2 and 4.3.3 of \cite{Puterman94}. And
    part~(b) is a direct corollary of part~(a) and
    Theorem~\ref{thm:relation}.
\end{proof}

\subsection{Proof of Theorem~\ref{thm:opt_policy}}
\begin{proof}
    To prove the theorem, we only need to prove $J_0^{{u}^{*}}(s_0) \ge
    J_0^*(s_0)$ for all $s_0 \in \mathcal{S}$. Given an $s_0 \in
    \mathcal{S}$, we have
         \begin{eqnarray*}
            J_0^{{u}^{*}}(s_0)
            &=& \max\limits_{y_0 \in \mathbb{R}}\hat J_0^{{u}^{*}}(s_0,y_0) \\
            &\ge& \hat J_0^{{u}^{*}}(s_0,y^*_0)\\
            &=&  V_0^{\tilde{u}^{*}}(s_0,y^*_0)\\
            &=&  V_0^{*}(s_0,y^*_0)\\
            &=& \hat J_0^{*}(s_0,y^*_0)\\
            &=& J_0^{*}(s_0),
        \end{eqnarray*}
    where the first equality follows from the variance property (\ref{eq_pseudovar}), the second and fourth
    equalities are ensured by Theorem~\ref{thm:relation}, the third and
    last equalities use the fact that $\tilde{u}^*$ and $y^*_0$
    attain the maximum of (\ref{equ:newmdp}) and (\ref{equ:outer}),
    respectively.
\end{proof}

\subsection{Proof of Theorem~\ref{thm:divid}}
\begin{proof}
We prove the theorem by contradiction. Suppose that there exists an
interval $(y_1^0,y_2^0) \in \mathcal Y$ such that for any
sub-interval $ \mathcal Y_{\rm sub} \subset (y_1^0,y_2^0)$, the
pseudo MV-MDPs $\left\{\hat{\mathcal M}(y): y \in \mathcal Y_{\rm
sub} \right\}$  does not have the same optimal policy.

Specifically, for the interval $(y_1^0,y_2^0) \in \mathcal Y$, there
exist $y_1^1, y_2^1$ with $y_1^0<y_1^1<y_2^1<y_2^0$ such that pseudo
MV-MDPs $\hat{\mathcal M}(y_1^1) ~\text{and}~ \hat{\mathcal
M}(y_2^1)$  have no common optimal policy, we assume that
$\hat u_{*i}^{1} \in \mathcal U^{\rm HD}$ is the optimal policy for
the pseudo MV-MDP $\hat{\mathcal M}(y_i^1)$. Using the same
argument, we obtain an increasing sequence $\left\{y_1^n; n \ge
0\right\}$ and a decreasing sequence $\left\{y_2^n; n \ge 0\right\}$
with $y_1^n < y_2^n$, where we denote $\hat  u_{*i}^{n} \in \mathcal
U^{\rm HD}$ as an optimal policy for pseudo MV-MDP $\hat{\mathcal
M}(y_i^n)$. Since $\mathcal U^{\rm HD}$ is finite, there exist two
sub-sequences $\left\{y_1^{k_n}; n \ge 0\right\} \subset
\left\{y_1^{n}; n \ge 0\right\}, \left\{y_2^{k_n}; n \ge 0\right\}
\subset \left\{y_2^{n}; n \ge 0\right\}$ and two policies $\hat
u_*^{1},\hat u_*^{2} \in \mathcal U^{\rm HD}$ such that $\hat
u_*^{i}$ is optimal for pseudo MV-MDPs $\left\{\hat{\mathcal
M}(y_i^{k_n}); n \ge 0\right\}$, i.e.,
\begin{equation}\label{equ:seque_opt}
        \hat J_0^{\hat u_*^{i}}(s_0,y_i^{k_n}) = \hat J_0^*(s_0,y_i^{k_n}),\quad\forall i=1,2, n \ge 0.
\end{equation}
We denote by $\hat y_i=\lim\limits_{n \rightarrow \infty}y_i^{k_n},
i=1,2$,    without loss of generality, we assume $\hat y_1=\hat
y_2:=y_1$ (otherwise, replace $(y_1^0,y_2^0)$ with $(\hat y_1,\hat
y_2)$). Taking $n \rightarrow \infty$ in the LHS and RHS of
(\ref{equ:seque_opt}), we obtain
\begin{equation}\label{equ:lim_opt}
        \hat J_0^{\hat u_*^{i}}(s_0,y_1) = \hat J_0^*(s_0,y_1),\quad\forall i=1,2
\end{equation}
based on the continuity of $\hat J_0^{\hat u_*^{i}}(s_0,\cdot)$ and
$\hat J_0^{*}(s_0,\cdot)$. (\ref{equ:lim_opt}) implies that $\hat
u_*^{1} ~\text{and}~ \hat u_*^{2}$ are both optimal policies of
pseudo MV-MDP $\hat{\mathcal M}(y_1)$.

Now, we turn to interval $(y_1^0,\hat y_1) \subset (y_1^0,y_2^0)$
and use the same argument, there exists $y_2 \in (y_1^0,\hat y_1)$
such that the pseudo MV-MDP $\hat{\mathcal M}(y_2)$ has at least two
different optimal policies. Repeat this process, there exists an
infinite sequence $\left\{y_n,n \ge 1\right\}$ such that each
pseudo MV-MDP $\hat{\mathcal M}(y_n)$ has at least two different
optimal policies, which contradicts the finite deterministic policy
space $\mathcal U^{\rm HD}$. Therefore, Theorem~\ref{thm:divid}
holds.
\end{proof}

\subsection{Proof of Theorem~\ref{thm:piece_conca}}
\begin{proof}
Given $s_0 \in \mathcal S$ and $u \in \mathcal U^{\rm HD}$, the
pseudo mean-variance of (\ref{eq_pseudoMV}) can be rewritten as
\begin{equation}\nonumber
        \hat J_0^u(s_0,y_0) = -\lambda \cdot y_0^2 + 2\lambda \mathbbm{E}_{s_0}^u
        [R_{0:T}] \cdot y_0 + \mathbbm{E}_{s_0}^u [R_{0:T}-\lambda R_{0:T}^2],
\end{equation}
which is obviously a quadratic concave function of $y_0$.

According to Theorem~\ref{thm:divid}, if $y_0 \in (y^k,y^{k+1}]$ for
some $k$, we have $\hat J_0^{*}(s_0,y_0)=\hat J_0^{\hat
u^k_*}(s_0,y_0)$, where the optimal policy $\hat u^k_*$ remains
unvaried for any $y_0 \in (y^k,y^{k+1}]$ and the assoicated $\hat
J_0^{*}(s_0,y_0)$ is quadratic concave with respect to $y_0$.
Therefore, $\hat J_0^{*}(s_0,y_0)$ is piecewise quadratic concave
and is divided into concave segments by the break points
$\left\{y^1,\ldots,y^n\right\}$.
\end{proof}

\subsection{Proof of Theorem~\ref{thm:converge}}\label{app:thm_conver}
We prove this theorem from the perspective of sensitivity-based
optimization theory, which has been used for optimizing long-run
(mean-)variance MDPs \citep{xia2016,xia2020} and discounted variance
MDPs \citep{xia18}.
    We first introduce the \emph{performance difference
        formula} and the \emph{performance derivative formula} for
    finite-horizon standard MDPs\citep{jia2011solving,zhao2010optimization}.
    Consider a finite-horizon standard MDP, where the objective is to maximize the
    expectation of $T$-horizon accumulative rewards among
    Markov randomized policies,
    \begin{equation}
        \nonumber
        \mu_0^*(s_0):=\max\limits_{u \in \mathcal U^{\rm MR}} \mu_0^u(s_0),\quad s_0 \in \mathcal S.
    \end{equation}

    For any Markov randomized policy $u=(u_t; t \in \mathcal T) \in
    \mathcal{U}^{\rm MR}$, we denote
    \begin{equation}
        \nonumber
        r_{u_t}(s)=\sum\limits_{a \in \mathcal A(s)}r_t(s,a)u_t(a|s), \quad t \in \mathcal T, s \in \mathcal S,
    \end{equation}
    \begin{equation}
        \nonumber
        P_{u_t}(s'|s)=\sum\limits_{a \in \mathcal A(s)}P_t(s'|s,a)u_t(a|s), \quad t \in \mathcal T, s,s' \in \mathcal S,
    \end{equation}
    and let $\bm r_{u_t}, \bm P_{u_t}$ be the corresponding vector form
    and matrix form, respectively. For notational simplicity, we omit
    $u$ and use $\bm r_{t}, \bm P_{t}$ and $\mu_0(s_0)$ to represent
    $\bm r_{u_t}, \bm P_{u_t}$ and the mean $\mu_0^u(s_0)$,
    respectively. We also use the superscript ``$'$" to indicate the
    parameters under Markov randomized policy $u'=(u'_t; t \in \mathcal
    T) \in \mathcal{U}^{\rm MR}$, and use ``$\delta$" to indicate the
    parameters under mixed policy $\delta_u^{u'}=(1-\delta)u+\delta u'$.

    In what follows, we give the {performance difference
        formula}, the {performance derivative formula} and  the \emph{optimality condition}  for
    finite-horizon standard MDPs, as stated in Lemmas \ref{lem:pdf}-\ref{lem:opt_con}.
    \begin{lem}\label{lem:pdf}
        (\textbf{Performance Difference Formula})

        For any two Markov policies $u=(u_t; t \in \mathcal T), u'=(u'_t; t
        \in \mathcal T)$, and initial state $s_0 \in \mathcal S$, we derive
        the performance difference between $\mu_0(s_0)$ and $\mu_0'(s_0)$ as
        follows,
        \begin{equation}\label{equ:pdf}
            \mu_0'(s_0)-\mu_0(s_0) = \bm{e}_{s_0}\sum_{t=0}^{T-1}
            \prod_{\tau=0}^{t-1} \bm P'_\tau\left[\bm r'_t-\bm r_t+\left(\bm
            P'_t-\bm P_t\right)\bm g_{t+1}\right],
        \end{equation}
        where $\bm
        g_{t+1}=\sum\limits_{k=t+1}^{T-1}\prod\limits_{\tau=t+1}^{k-1}\bm
        P_\tau \bm r_k$, $\bm P'_{T-1}=\bm P_{T-1}=\bm{I}$, $\bm{I}$
        denotes the identity matrix, and $\bm{e}_{s_0}$ denotes the unit row
        vector with $\bm{e}_{s_0}(s_0)=1$.
    \end{lem}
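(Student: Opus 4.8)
The plan is to work entirely in the vector/matrix notation just introduced and to reduce the claim to a single telescoping identity along trajectories generated by the perturbed policy $u'$. First I would encode the mean as a backward value function of the base policy $u$. Define the value vectors $\bm g_t := \sum_{k=t}^{T-1}\prod_{\tau=t}^{k-1}\bm P_\tau\bm r_k$, so that the $\bm g_{t+1}$ appearing in the statement is exactly this object with its index shifted by one. By construction these satisfy the finite-horizon Bellman recursion $\bm g_t=\bm r_t+\bm P_t\bm g_{t+1}$ with terminal condition $\bm g_T=\bm 0$ (empty sum). Since $\mu_0(s_0)=\mathbbm E_{s_0}^u[\sum_{t=0}^{T-1}r_t]=\bm e_{s_0}\bm g_0$, the left-hand side of \eqref{equ:pdf} becomes $\mu_0'(s_0)-\bm e_{s_0}\bm g_0$.

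The key step is a pathwise telescoping. Along any realized trajectory $(s_0,a_0,s_1,\ldots,s_T)$, using $\bm g_T=\bm 0$ to telescope $\sum_{t=0}^{T-1}[g_{t+1}(s_{t+1})-g_t(s_t)]=-g_0(s_0)$, I would rewrite
\[
\sum_{t=0}^{T-1}r_t(s_t,a_t)=g_0(s_0)+\sum_{t=0}^{T-1}\big[r_t(s_t,a_t)+g_{t+1}(s_{t+1})-g_t(s_t)\big],
\]
where the rewards and transitions on the right are those actually experienced. Applying this identity to a $u'$-trajectory and taking $\mathbbm E_{s_0}^{u'}[\cdot]$ cancels $g_0(s_0)$ against $\mu_0(s_0)$ and leaves
\[
\mu_0'(s_0)-\mu_0(s_0)=\mathbbm E_{s_0}^{u'}\Big[\sum_{t=0}^{T-1}\big(r_t(s_t,a_t)+g_{t+1}(s_{t+1})-g_t(s_t)\big)\Big].
\]

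Next I would evaluate each summand by conditioning on $s_t$. Under $u'$ the action is drawn from $u'_t(\cdot|s_t)$ and the successor from $P'_t(\cdot|s_t,a_t)$, so the conditional expectation of $r_t(s_t,a_t)+g_{t+1}(s_{t+1})$ given $s_t$ equals $\bm r'_t(s_t)+(\bm P'_t\bm g_{t+1})(s_t)$; subtracting $g_t(s_t)=(\bm r_t+\bm P_t\bm g_{t+1})(s_t)$ via the base-policy Bellman equation collapses the summand to the advantage term $[\bm r'_t-\bm r_t+(\bm P'_t-\bm P_t)\bm g_{t+1}](s_t)$. Since the law of $s_t$ under $u'$ started at $s_0$ is the row vector $\bm e_{s_0}\prod_{\tau=0}^{t-1}\bm P'_\tau$, taking the outer expectation and summing over $t$ reproduces \eqref{equ:pdf} exactly; the endpoint conventions $\bm P'_{T-1}=\bm P_{T-1}=\bm I$ and $\bm g_T=\bm 0$ only make the $t=T-1$ term well defined, where the factor $(\bm P'_{T-1}-\bm P_{T-1})\bm g_T$ vanishes in any case.

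I do not expect a serious obstacle: this is a standard finite-horizon performance-difference derivation, and the only points demanding care are purely bookkeeping, namely orienting the telescoping so that $g_0(s_0)$ cancels $\mu_0(s_0)$, and distinguishing $\bm r'_t$ (the common reward function $r_t$ averaged against the perturbed action rule $u'_t$) from $\bm r_t$ while keeping in mind that the reward function itself is shared by both policies.
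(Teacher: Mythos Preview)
Your argument is correct: defining the value vectors $\bm g_t$ of the base policy, invoking their Bellman recursion $\bm g_t=\bm r_t+\bm P_t\bm g_{t+1}$, telescoping along a $u'$-trajectory, and then conditioning on $s_t$ to collapse each summand to the advantage term is exactly the standard derivation of the finite-horizon performance difference formula. The bookkeeping is handled cleanly, including the harmless endpoint conventions.

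There is, however, nothing to compare it to in the paper itself. The authors do not prove Lemma~\ref{lem:pdf}; they state it (together with Lemmas~\ref{lem:deriv} and~\ref{lem:opt_con}) as a known result imported from \cite{jia2011solving} and \cite{zhao2010optimization}, and then use it as a black box when deriving the mean-variance performance difference formula \eqref{equ:pdf_mv}. So your proposal in fact supplies a self-contained argument where the paper only gives a citation; the telescoping/advantage-decomposition route you take is the classical one and would be perfectly appropriate here.
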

    \begin{lem}\label{lem:deriv}
        (\textbf{Performance Derivative Formula})

        Given two Markov policies $u=(u_t; t \in \mathcal T), u'=(u'_t; t
        \in \mathcal T)$, the initial state $s_0 \in \mathcal S$ and a
        constant $\delta \in [0,1]$, the performance derivative of the mean
        $\mu_0^\delta(s_0)$ at policy $u$ along direction $u'$ takes the
        following form
        \begin{equation}\label{equ:deriva}
            \frac{\partial \mu_0^\delta(s_0)}{\partial
                \delta}\Big|_{\delta=0}=\bm{e}_{s_0}\sum_{t=0}^{T-1}
            \prod_{\tau=0}^{t-1} \bm P_\tau\left[\bm r'_t-\bm r_t+\left(\bm
            P'_t-\bm P_t\right)\bm g_{t+1}\right].
        \end{equation}
    \end{lem}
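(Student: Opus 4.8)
The plan is to obtain the derivative formula \eqref{equ:deriva} as a direct limiting consequence of the performance difference formula in Lemma~\ref{lem:pdf}, which is the standard route in sensitivity-based optimization. First I would introduce the mixed policy $\delta_u^{u'} = (1-\delta)u + \delta u'$ and record that its induced reward vectors and transition matrices are affine in $\delta$, namely $\bm r_t^\delta = \bm r_t + \delta(\bm r'_t - \bm r_t)$ and $\bm P_t^\delta = \bm P_t + \delta(\bm P'_t - \bm P_t)$ for each $t \in \mathcal T$. The key structural point is that the vector $\bm g_{t+1} = \sum_{k=t+1}^{T-1}\prod_{\tau=t+1}^{k-1}\bm P_\tau \bm r_k$ appearing in \eqref{equ:pdf} is built entirely from the baseline policy $u$, so it carries no $\delta$-dependence once $u$ is held fixed as the reference policy.

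Next I would apply Lemma~\ref{lem:pdf} with $u$ as the baseline (unprimed) policy and $\delta_u^{u'}$ playing the role of the other (primed) policy, obtaining
\begin{equation*}
\mu_0^\delta(s_0) - \mu_0(s_0) = \bm{e}_{s_0}\sum_{t=0}^{T-1}\prod_{\tau=0}^{t-1}\bm P_\tau^\delta\left[\bm r_t^\delta - \bm r_t + (\bm P_t^\delta - \bm P_t)\bm g_{t+1}\right].
\end{equation*}
Substituting the affine expressions above, both $\bm r_t^\delta - \bm r_t = \delta(\bm r'_t - \bm r_t)$ and $\bm P_t^\delta - \bm P_t = \delta(\bm P'_t - \bm P_t)$ contribute a common factor $\delta$ inside the bracket, so I can pull $\delta$ out of the entire sum. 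Dividing by $\delta$ then gives
\begin{equation*}
\frac{\mu_0^\delta(s_0) - \mu_0(s_0)}{\delta} = \bm{e}_{s_0}\sum_{t=0}^{T-1}\prod_{\tau=0}^{t-1}\bm P_\tau^\delta\left[(\bm r'_t - \bm r_t) + (\bm P'_t - \bm P_t)\bm g_{t+1}\right].
\end{equation*}
Finally I would let $\delta \to 0$. Since each $\bm P_\tau^\delta$ is affine in $\delta$ and finite matrix products are continuous, the perturbed product $\prod_{\tau=0}^{t-1}\bm P_\tau^\delta$ converges to the baseline product $\prod_{\tau=0}^{t-1}\bm P_\tau$, while the left-hand side converges to the derivative $\partial \mu_0^\delta(s_0)/\partial \delta|_{\delta=0}$ by definition, which yields exactly \eqref{equ:deriva}.

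There is no genuine analytical obstacle here; the only point requiring care is recognizing that the factor $\delta$ already sits inside the bracket, so the derivative of the leading product $\prod_{\tau=0}^{t-1}\bm P_\tau^\delta$ never appears and no product rule is needed. In effect, the passage from the difference formula to the derivative formula amounts to collapsing the primed products $\prod_{\tau} \bm P'_\tau$ in \eqref{equ:pdf} to their $\delta=0$ values $\prod_{\tau} \bm P_\tau$. As an alternative I could prove \eqref{equ:deriva} from scratch by differentiating the finite-horizon value recursion for $\mu_0^\delta(s_0)$ term by term, but that reintroduces precisely the product-rule bookkeeping that the difference-formula route avoids, so I would prefer the limiting argument above.
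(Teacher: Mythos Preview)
Your argument is correct. The paper does not actually supply its own proof of Lemma~\ref{lem:deriv}; it simply states the performance difference and derivative formulas for finite-horizon standard MDPs and attributes them to \cite{jia2011solving} and \cite{zhao2010optimization}, so there is no in-paper derivation to compare against. Your route---apply Lemma~\ref{lem:pdf} with the mixed policy $\delta_u^{u'}$ in the primed role, use the affinity $\bm r_t^\delta-\bm r_t=\delta(\bm r'_t-\bm r_t)$ and $\bm P_t^\delta-\bm P_t=\delta(\bm P'_t-\bm P_t)$ to factor $\delta$ out of the bracket, divide, and let $\delta\to 0$ so that $\prod_\tau \bm P_\tau^\delta\to\prod_\tau \bm P_\tau$---is exactly the standard sensitivity-based derivation one finds in that literature, and your observation that the product rule is avoided because the bracket already carries the factor $\delta$ is the key structural point.
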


    \begin{lem}\label{lem:opt_con}
    (\textbf{Optimality Condition})

    A Markov deterministic policy $u=(u_t; t \in \mathcal T) \in
    \mathcal{U}^{\rm MD}$ is an optimal policy if and only if it holds
    that for any $t \in \mathcal T, (s,a) \in \mathcal K$,
    \begin{align}\label{equ:opt_condition}
        \nonumber
            r_t(s,u_t(s))+\sum\limits_{s' \in \mathcal S}P_t(s'|s,u_t(s))g_{t+1}^{u}(s')\geq
            r_t(s,a)+\sum\limits_{s' \in \mathcal S}P_t(s'|s,a)g_{t+1}^{u}(s').
    \end{align}
    \end{lem}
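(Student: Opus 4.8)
The plan is to prove the two implications separately, using the performance difference formula (Lemma~\ref{lem:pdf}) for sufficiency and a single-stage deviation argument for necessity. Throughout I would read the quantity $Q_t^u(s,a):=r_t(s,a)+\sum_{s'\in\mathcal S}P_t(s'|s,a)g_{t+1}^u(s')$ as the ``$Q$-value'' of taking action $a$ at stage $t$ in state $s$ and then following $u$; the stated condition is precisely $u_t(s)\in\argmax_{a\in\mathcal A(s)}Q_t^u(s,a)$ for every $t$ and every $s$, and $Q_t^u(s,u_t(s))=g_t^u(s)$.

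For sufficiency, suppose the condition holds and let $u'=(u'_t;t\in\mathcal T)\in\mathcal U^{\rm MR}$ be arbitrary. The first step is to recognize, for each stage $t$ and state $s$, that the bracket appearing in Lemma~\ref{lem:pdf} equals a $Q$-value gap,
\begin{equation*}
\big[\bm r'_t-\bm r_t+(\bm P'_t-\bm P_t)\bm g_{t+1}\big](s)=\sum_{a\in\mathcal A(s)}u'_t(a|s)\,Q_t^u(s,a)-Q_t^u(s,u_t(s)).
\end{equation*}
Since $u'_t(\cdot|s)$ is a probability distribution, the first term is a convex combination of the numbers $\{Q_t^u(s,a)\}_a$, each of which is at most $Q_t^u(s,u_t(s))$ by assumption; hence every component of the bracket is nonpositive. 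Because $\bm e_{s_0}$ and each $\bm P'_\tau$ have nonnegative entries, the whole sum in (\ref{equ:pdf}) is nonpositive, so $\mu_0'(s_0)-\mu_0(s_0)\le 0$ for all $s_0$. As $u'$ was arbitrary, $u$ is optimal.

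For necessity I would argue by contraposition. If the condition fails there exist $t_0$, a state $s^\dagger$, and an action $a_0\in\mathcal A(s^\dagger)$ with $Q_{t_0}^u(s^\dagger,a_0)>g_{t_0}^u(s^\dagger)$. Define $u'$ to agree with $u$ everywhere except $u'_{t_0}(s^\dagger)=a_0$. Since $u'=u$ for all $t>t_0$ we have $\bm g_{t_0+1}^{u'}=\bm g_{t_0+1}^{u}$, and the value-to-go recursion $\bm g_{t}^{u'}=\bm r'_{t}+\bm P'_{t}\bm g_{t+1}^{u'}$ gives $g_{t_0}^{u'}(s^\dagger)=Q_{t_0}^u(s^\dagger,a_0)>g_{t_0}^u(s^\dagger)$, with $g_{t_0}^{u'}$ and $g_{t_0}^{u}$ agreeing at all other states. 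Since $u'=u$ for $\tau<t_0$ as well, only the $t=t_0$ term survives in Lemma~\ref{lem:pdf}, yielding $\mu_0'(s_0)-\mu_0(s_0)=\big[\bm e_{s_0}\prod_{\tau=0}^{t_0-1}\bm P_\tau\big](s^\dagger)\,\big(Q_{t_0}^u(s^\dagger,a_0)-g_{t_0}^u(s^\dagger)\big)$, which is strictly positive whenever $s^\dagger$ is visited at stage $t_0$ with positive probability, contradicting optimality.

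The hard part will be exactly this reachability caveat in the last step: if the visitation weight $\big[\bm e_{s_0}\prod_{\tau=0}^{t_0-1}\bm P_\tau\big](s^\dagger)$ vanishes under $u$ from every initial state, the single deviation does not change $\mu_0^u$ and the strict contradiction fails. To close this gap I would either invoke the standing reachability assumption (every state is visited with positive probability in the regimes of interest), or recast necessity via backward induction on the value-to-go functions: an optimal policy must satisfy $\bm g_t^u=\bm V_t^*$ with $\bm V_t^*$ the Bellman-optimal value-to-go (the finite-horizon principle of optimality, cf.\ Theorems~4.3.2--4.3.3 of \cite{Puterman94} already used in the proof of Theorem~\ref{thm:opt}), whence $u_t(s)$ attaining the stagewise maximum is immediate for every $(t,s)$. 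The performance derivative formula (Lemma~\ref{lem:deriv}) offers the same conclusion through $\partial_\delta\mu_0^\delta|_{\delta=0}\le 0$, but carries an identical occupancy-weight obstruction. I expect sufficiency to be entirely routine once the bracket is written as a $Q$-value gap, with all the genuine care concentrated in this reachability point.
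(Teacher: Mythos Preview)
The paper does not actually prove Lemma~\ref{lem:opt_con}; it is stated alongside Lemmas~\ref{lem:pdf} and~\ref{lem:deriv} as a standard result for finite-horizon MDPs, with the citations \cite{jia2011solving,zhao2010optimization} attached to the whole block. So there is no ``paper's own proof'' to compare against, and your proposal is supplying an argument where the authors simply invoke the literature.

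Your sufficiency argument is correct and is exactly the standard one: rewriting each bracket in (\ref{equ:pdf}) as a $Q$-value gap and using nonnegativity of the occupancy weights $\bm e_{s_0}\prod_\tau \bm P'_\tau$ gives $\mu_0'(s_0)\le\mu_0(s_0)$ for every $u'$ and every $s_0$.

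On necessity you have correctly located a genuine issue, but your proposed repair via backward induction does not close it. The claim ``an optimal policy must satisfy $\bm g_t^u=\bm V_t^*$'' is precisely what fails at unreachable $(t,s)$: optimality of $u$ only forces $g_0^u(s_0)=V_0^*(s_0)$ for all $s_0$, and propagating this forward to $g_t^u=V_t^*$ requires that every $s$ be reachable at time $t$ under $u$ from some $s_0$ --- the same occupancy-weight obstruction you already flagged for the single-deviation and derivative arguments. Theorems~4.3.2--4.3.3 in \cite{Puterman94} give the converse (a policy built from stagewise maximizers is optimal), not that every optimal policy is a stagewise maximizer at every $(t,s)$. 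So as literally stated, the ``only if'' direction of the lemma needs either a reachability hypothesis or the weaker conclusion that the inequality holds at all reachable $(t,s)$. In the paper's actual use of the lemma (proof of Theorem~\ref{thm:converge}, part~(ii)), the policy $\tilde u^*$ is obtained from the dynamic programming recursion (\ref{equ:opt_dp}) and therefore satisfies the stagewise condition by construction, so the gap is harmless there; but it is a gap in the lemma's statement, not merely in your proof.
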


With the aid of the performance difference
formula (\ref{equ:pdf}) and the performance derivative formula (\ref{equ:deriva}) for finite-horizon standard MDPs, we can also derive the performance difference
formula and the performance derivative formula for finite-horizon MV-MDPs.

According to Theorem~\ref{thm:relation}, for any
history-dependent randomized policy $u=(u_t; t \in \mathcal T) \in
\mathcal{U^{\rm HR}}$, there exists a history-dependent randomized
policy $\tilde{u}'=(\tilde u_t'; t \in \mathcal T)
\in \tilde{\mathcal{U}}^{\rm HR}$ such that
\begin{equation*}
V_0^{\tilde u'}(s_0,\mu_0^u(s_0))=  J_0^u(s_0),\quad\forall s_0 \in
\mathcal S.
\end{equation*}
By utilizing Theorem~5.5.1 of \cite{Puterman94}, we can further find
a Markov randomized policy $\tilde{u}=(\tilde u_t; t \in
\mathcal T) \in \tilde{\mathcal{U}}^{\rm MR}$ such that
\begin{equation*}
V_0^{\tilde u}(s_0,\mu_0^u(s_0))=V_0^{\tilde
u'}(s_0,\mu_0^u(s_0))=J_0^u(s_0),\quad\forall s_0 \in \mathcal S.
\end{equation*}
Therefore, each history-dependent randomized policy $u=(u_t; t \in \mathcal T) \in \mathcal{U^{\rm HR}}$ for finite-horizon
MV-MDP (\ref{equ:mv_mdp}) corresponds to a Markov randomized policy
$\tilde{u}=(\tilde u_t; t \in \mathcal T) \in
\tilde{\mathcal{U}}^{\rm MR}$ for finite-horizon standard MDP
(\ref{equ:newmdp}).

We follow the notations in the finite-horizon standard MDPs and use `$\sim$' to denote the parameters under policy $\tilde{u}$.
%and derive the performance difference
%formula and the performance derivative formula for finite-horizon MV-MDPs.

\begin{lem}
(\textbf{Performance Difference Formula for MV-MDPs})

For any two history-dependent randomized policies $u=(u_t; t \in \mathcal T),u'=(u'_t; t \in \mathcal T)$ and initial state $s_0
\in \mathcal S$, we derive the difference between the mean-variance
metrics $J_0(s_0)$ and $J_0'(s_0)$ as follows.
\begin{equation}\label{equ:pdf_mv}
J_0'(s_0)-J_0(s_0) = \bm{e}_{(s_0,\mu_0(s_0))}\sum_{t=0}^{T}
\prod_{\tau=0}^{t-1} \tilde{\bm P'}_\tau\left[\tilde{\bm
r}'_t-\tilde{\bm r}_t+\left(\tilde{\bm P'}_t-\tilde{\bm
P}_t\right)\tilde{\bm g}_{t+1}\right] + \lambda
(\mu_0'(s_0)-\mu_0(s_0))^2,
\end{equation}
where $\bm \tilde
g_{t+1}=\sum\limits_{k=t+1}^{T}\prod\limits_{\tau=t+1}^{k-1}\tilde{\bm
P}_\tau \tilde{\bm r}_k$, $\bm P'_{T}=\bm P_{T}=\bm{I}$, and
$\bm{e}_{(s_0,\mu_0(s_0))}$ denotes the unit row vector with
$\bm{e}_{(s_0,\mu_0(s_0))}(s_0,\mu_0(s_0))=1$.
\end{lem}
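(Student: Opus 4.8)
The plan is to reduce the mean-variance performance difference to the already-established performance difference formula for the standard augmented MDP $\widetilde{\mathcal M}$ (Lemma~\ref{lem:pdf}), handling the mismatch of pseudo means by a telescoping step. First I would use the variance identity (\ref{eq_pseudovar}) to write $J_0(s_0)=\hat J_0^{u}(s_0,\mu_0(s_0))$ and $J_0'(s_0)=\hat J_0^{u'}(s_0,\mu_0'(s_0))$, and then insert the common pseudo mean $y_0:=\mu_0(s_0)$ to split
\begin{equation*}
J_0'(s_0)-J_0(s_0)=\big[\hat J_0^{u'}(s_0,y_0)-\hat J_0^{u}(s_0,y_0)\big]+\big[\hat J_0^{u'}(s_0,\mu_0'(s_0))-\hat J_0^{u'}(s_0,y_0)\big].
\end{equation*}
The first bracket compares two policies at the \emph{same} initial pseudo mean, which is exactly the situation the standard formula can handle; the second bracket moves only the pseudo-mean argument of a single policy and will contribute the quadratic correction term.

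For the first bracket, I would pass to the augmented MDP $\widetilde{\mathcal M}$ at initial state $(s_0,y_0)=(s_0,\mu_0(s_0))$. By Theorem~\ref{thm:relation} together with the Markov reduction (Theorem~5.5.1 of \cite{Puterman94}), both $u$ and $u'$ admit Markov randomized representatives $\tilde u,\tilde u'\in\tilde{\mathcal U}^{\rm MR}$ with $V_0^{\tilde u}(s_0,y_0)=\hat J_0^{u}(s_0,y_0)$ and $V_0^{\tilde u'}(s_0,y_0)=\hat J_0^{u'}(s_0,y_0)$. Since reward values are drawn from a finite set over finitely many stages, the augmented states reachable from $(s_0,y_0)$ form a finite set, so the vector/matrix formalism underlying Lemma~\ref{lem:pdf} is valid on this finite reachable sub-MDP. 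Applying Lemma~\ref{lem:pdf} to $\widetilde{\mathcal M}$ then makes the first bracket equal to the summation term of (\ref{equ:pdf_mv}). The index now runs to $T$ rather than $T-1$ because $\widetilde{\mathcal M}$ carries the action-independent terminal reward $\tilde r_T=-\lambda y_T^2$ as an extra stage-$T$ term; the resulting $t=T$ summand vanishes since $\tilde{\bm r}'_T=\tilde{\bm r}_T$ and $\tilde{\bm P}'_T=\tilde{\bm P}_T=\bm I$, so including it is harmless and merely records the terminal reward inside $\tilde{\bm g}_{t+1}$.

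For the second bracket, I would use the explicit quadratic form of the pseudo mean-variance in $y_0$ established in the proof of Theorem~\ref{thm:piece_conca}, namely $\hat J_0^{u'}(s_0,y)=-\lambda y^2+2\lambda\mu_0'(s_0)\,y+\mathbbm E_{s_0}^{u'}[R_{0:T}-\lambda R_{0:T}^2]$. Substituting $y=\mu_0'(s_0)$ and $y=y_0=\mu_0(s_0)$ and subtracting, the constant piece cancels and the quadratic and linear pieces combine into a perfect square, giving $\hat J_0^{u'}(s_0,\mu_0'(s_0))-\hat J_0^{u'}(s_0,\mu_0(s_0))=\lambda(\mu_0'(s_0)-\mu_0(s_0))^2$. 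Adding the two brackets yields (\ref{equ:pdf_mv}).

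I expect the main obstacle to be the rigorous justification in the first bracket: one must ensure that both history-dependent policies are replaced by Markov randomized policies at the \emph{common fixed} initial state $(s_0,\mu_0(s_0))$ (not at their own respective means), so that Lemma~\ref{lem:pdf} applies to a single well-defined finite-state MDP; and one must confirm that restricting $\widetilde{\mathcal M}$ to its finite reachable augmented states leaves unchanged all the quantities $V_0^{\tilde u},\tilde{\bm P}_t,\tilde{\bm r}_t,\tilde{\bm g}_{t+1}$ appearing in the formula. Once these two points are secured, the remaining steps are purely algebraic.
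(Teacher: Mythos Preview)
Your proposal is correct and follows essentially the same route as the paper: set $y_0=\mu_0(s_0)$, use the identity $J_0^{u}(s_0)=\hat J_0^{u}(s_0,y_0)+\lambda(\mu_0^{u}(s_0)-y_0)^2$ (the paper's (\ref{equ:pro_var}), which is equivalent to your explicit quadratic computation) to isolate the $\lambda(\mu_0'-\mu_0)^2$ term, pass to the augmented MDP via Theorem~\ref{thm:relation}, and apply Lemma~\ref{lem:pdf}. Your attention to fixing the common initial augmented state $(s_0,\mu_0(s_0))$ for the Markov reduction and to the harmless extension of the sum to $t=T$ is exactly what the paper does implicitly.
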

\begin{proof}
    According to the property of variance, the mean-variance $J_0(s_0)$ and the pseudo mean-variance $\hat J_0(s_0,y_0)$ have the following relation
    \begin{equation}\label{equ:pro_var}
        J_0(s_0)=\hat J_0(s_0,y_0)+\lambda(\mu_0(s_0)-y_0)^2.
    \end{equation}
    Therefore, we have
    \begin{eqnarray*}
        J_0'(s_0)-J_0(s_0)
        &=&\hat J_0'(s_0,\mu_0(s_0))-\hat J_0(s_0,\mu_0(s_0))+\lambda(\mu_0'(s_0)-\mu_0(s_0))^2\\
        &=&V_0'(s_0,\mu_0(s_0))-V_0(s_0,\mu_0(s_0))+\lambda(\mu_0'(s_0)-\mu_0(s_0))^2\\
        &=&\bm{e}_{(s_0, \mu_0(s_0))}\sum_{t=0}^{T} \prod_{\tau=0}^{t-1} \tilde{\bm P'}_\tau\left[\tilde{\bm r}'_t-\tilde{\bm r}_t+\left(\tilde{\bm P'}_t-\tilde{\bm P}_t\right)\tilde{\bm g}_{t+1}\right] + \lambda (\mu_0'(s_0)-\mu_0(s_0))^2,
    \end{eqnarray*}
    where the first equality follows from (\ref{eq_pseudoMV}) and (\ref{equ:pro_var}), the second equality is guaranteed by Theorem~\ref{thm:relation}, and the last equality follows directly from the performance difference formula (\ref{equ:pdf}).
\end{proof}

Similar to the proof of Lemma \ref{lem:deriv} and noting that
\begin{equation*}
\frac{\partial (\mu_0^\delta(s_0)-\mu_0(s_0))^2}{\partial
\delta}\Big|_{\delta=0} = 2 (\mu_0^\delta(s_0)-\mu_0(s_0))
\frac{\partial \mu_0^\delta(s_0)}{\partial \delta}\Big|_{\delta=0} =
0,
\end{equation*}
we derive the performance derivative formula for finite-horizon
MV-MDPs as below.

\begin{lem}\label{lem:deriv_mv}
(\textbf{Performance Derivative Formula for MV-MDPs})

Given two history-dependent randomized policies $u=(u_t; t \in \mathcal T),u'=(u'_t; t \in \mathcal T)$, the initial state $s_0
\in \mathcal S$ and a constant $\delta \in [0,1]$, the derivative of
the mean-variance $J_0^\delta(s_0)$ at policy $u$ along direction
$u'$ takes the following form
\begin{equation}\label{equ:deriva_mv}
\frac{\partial J_0^\delta(s_0)}{\partial
\delta}\Big|_{\delta=0}=\bm{e}_{(s_0,\mu_0(s_0))}\sum_{t=0}^{T}
\prod_{\tau=0}^{t-1} \tilde{\bm P}_\tau\left[\tilde{\bm
r}'_t-\tilde{\bm r}_t+\left(\tilde{\bm P'}_t-\tilde{\bm
P}_t\right)\tilde{\bm g}_{t+1}\right].
\end{equation}
\end{lem}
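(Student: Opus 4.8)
The plan is to derive the MV-MDP derivative formula (\ref{equ:deriva_mv}) from the MV-MDP performance difference formula (\ref{equ:pdf_mv}) in precisely the same manner that the standard derivative formula (\ref{equ:deriva}) is obtained from the standard difference formula (\ref{equ:pdf}). The only genuinely new ingredient is that the quadratic correction term $\lambda(\mu_0'(s_0)-\mu_0(s_0))^2$ in (\ref{equ:pdf_mv}) contributes nothing to first order, which is exactly the vanishing derivative recorded just above the statement.

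First I would fix the base policy $u$ and take the perturbed policy in (\ref{equ:pdf_mv}) to be the mixed policy $\delta_u^{u'}=(1-\delta)u+\delta u'$. Denoting the parameters of the augmented MDP $\widetilde{\mathcal M}$ under $\delta_u^{u'}$ with a superscript $\delta$, the correspondence between mixed policies in the MV-MDP and in $\widetilde{\mathcal M}$ carried over from Theorem~\ref{thm:relation} gives $\tilde{\bm r}^\delta_t=(1-\delta)\tilde{\bm r}_t+\delta\tilde{\bm r}'_t$ and $\tilde{\bm P}^\delta_t=(1-\delta)\tilde{\bm P}_t+\delta\tilde{\bm P}'_t$, so that $\tilde{\bm r}^\delta_t-\tilde{\bm r}_t=\delta(\tilde{\bm r}'_t-\tilde{\bm r}_t)$ and $\tilde{\bm P}^\delta_t-\tilde{\bm P}_t=\delta(\tilde{\bm P}'_t-\tilde{\bm P}_t)$. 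Substituting into (\ref{equ:pdf_mv}) with the pseudo mean frozen at $y_0=\mu_0(s_0)$ yields
\begin{equation*}
J_0^\delta(s_0)-J_0(s_0)=\delta\,\bm{e}_{(s_0,\mu_0(s_0))}\sum_{t=0}^{T}\prod_{\tau=0}^{t-1}\tilde{\bm P}^\delta_\tau\left[\tilde{\bm r}'_t-\tilde{\bm r}_t+\left(\tilde{\bm P}'_t-\tilde{\bm P}_t\right)\tilde{\bm g}_{t+1}\right]+\lambda(\mu_0^\delta(s_0)-\mu_0(s_0))^2.
\end{equation*}
Dividing by $\delta$ and letting $\delta\to 0$, the matrix products $\prod_{\tau=0}^{t-1}\tilde{\bm P}^\delta_\tau$ converge to $\prod_{\tau=0}^{t-1}\tilde{\bm P}_\tau$, which reproduces the claimed right-hand side of (\ref{equ:deriva_mv}); it then remains only to show the quadratic term drops out.

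For that last step I would argue that $\mu_0^\delta(s_0)-\mu_0(s_0)=O(\delta)$: under $\delta_u^{u'}$ the transition matrices and reward vectors are affine in $\delta$, so $\mu_0^\delta(s_0)$ is a polynomial in $\delta$ with $\mu_0^0(s_0)=\mu_0(s_0)$, hence differentiable with bounded first derivative near $0$. Consequently $\lambda(\mu_0^\delta(s_0)-\mu_0(s_0))^2=O(\delta^2)$, and dividing by $\delta$ before taking the limit gives $0$; equivalently,
\begin{equation*}
\frac{\partial\,\lambda(\mu_0^\delta(s_0)-\mu_0(s_0))^2}{\partial\delta}\Big|_{\delta=0}=2\lambda\big(\mu_0^0(s_0)-\mu_0(s_0)\big)\frac{\partial\mu_0^\delta(s_0)}{\partial\delta}\Big|_{\delta=0}=0.
\end{equation*}
Combining the two pieces establishes (\ref{equ:deriva_mv}).

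The step I expect to require the most care is the bookkeeping around the fixed pseudo mean. The decomposition (\ref{equ:pro_var}) and Theorem~\ref{thm:relation} must be applied with the pseudo mean frozen at $y_0=\mu_0(s_0)$, the mean of the \emph{unperturbed} policy $u$, rather than at the moving target $\mu_0^\delta(s_0)$. This keeps the initial augmented state $(s_0,\mu_0(s_0))$ independent of $\delta$ and is exactly what permits the standard-MDP derivative formula (\ref{equ:deriva}) to be invoked verbatim on $\widetilde{\mathcal M}$ for the term $V_0^{\tilde u^\delta}(s_0,\mu_0(s_0))$. Were the pseudo mean allowed to track $\mu_0^\delta(s_0)$, the initial state would move with $\delta$ and an extra chain-rule contribution would appear; it is precisely the vanishing of that contribution, i.e. the zero derivative of the squared term above, that makes the MV-MDP derivative formula structurally identical to its standard-MDP counterpart.
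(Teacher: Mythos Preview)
Your proposal is correct and follows essentially the same route as the paper: the paper's argument is simply ``similar to the proof of Lemma~\ref{lem:deriv}'' together with the observation that $\frac{\partial (\mu_0^\delta(s_0)-\mu_0(s_0))^2}{\partial\delta}\big|_{\delta=0}=0$, which is exactly the two-step structure you carry out. Your explicit bookkeeping about freezing the pseudo mean at $\mu_0(s_0)$ and tracking the affine dependence of $\tilde{\bm r}^\delta_t,\tilde{\bm P}^\delta_t$ on $\delta$ spells out details the paper leaves implicit, but the underlying argument is the same.
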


Now, we give the proof of Theorem \ref{thm:converge}.

\begin{proof}
We first prove that Algorithm~\ref{alg:local} converges to a fixed
point solution to (\ref{eq_fixedpoint}). For each $s_0 \in
\mathcal{S}$ and $k \ge 0$, we have
\begin{equation}\label{equ: mono}
  J_0^{u^{(k)}}(s_0) \le J_0^{u^{(k+1)}}(s_0).
\end{equation}
The above inequality is ensured by the policy improvement step in
Algorithm~\ref{alg:local}, i.e.,
\begin{equation*}
J_0^{u^{(k)}}(s_0)=\hat J_0^{u^{(k)}}(s_0,y_0^{(k)}) \le \hat
J_0^{u^{(k+1)}}(s_0,y_0^{(k)}) \le \max\limits_{y_0 \in \mathcal
Y}\hat J_0^{u^{(k+1)}}(s_0,y_0) = J_0^{u^{(k+1)}}(s_0).
\end{equation*}
Therefore, for each $s_0 \in \mathcal{S}$, the sequence
$\left\{J_0^{u^{(k)}}(s_0); k \ge 0 \right\}$ generated by
Algorithm~\ref{alg:local} is monotonically increasing, necessarily
$\left\{J_0^{u^{(k)}}(s_0); k \ge 0\right\}$ converges. Since
$\mathcal U^{\rm HD}$ is finite, the equality in (\ref{equ: mono})
holds within finite iterations. Thus, the convergence of
Algorithm~\ref{alg:local} is proved. Suppose
Algorithm~\ref{alg:local} converges to $u^*$ with corresponding
$y_0^*=\mathbbm{E}_{s_0}^{u^*}[R_{0:T}]$, the pair $(u^*,y_0^*)$ must
satisfy the fixed point equation (\ref{eq_fixedpoint}). Therefore,
Algorithm~\ref{alg:local} converges to a fixed point solution
$(u^*,y_0^*)$ to (\ref{eq_fixedpoint}). Below, we show that
$\mathcal{U}_{\rm valid}^{\rm HD}(u^*)$ is a valid pruned
deterministic policy space and further show that $u^*$ and $y_0^*$
are both local optima.

To prove part~(i), we show that
\begin{equation*}
J_0^u(s_0)=J_0^{u^*}(s_0), \quad\forall  u \in \mathcal{U}^{\rm HD}
\backslash \mathcal{U}_{\rm valid}^{\rm HD}(u^*).
\end{equation*}
For any deterministic policy $u \in \mathcal{U}^{\rm HD} \backslash
\mathcal{U}_{\rm valid}^{\rm HD}(u^*)$, we have
\begin{equation*}
\hat {J}^{u^*}_0(s_0,y_0^*) = \hat {J}^{u}_0(s_0,y_0^*),
\quad\forall s_0 \in \mathcal S.
\end{equation*}
We can prove $\mu_0^u(s_0)=y_0^*$ with contradiction as follows.
Assume $\mu_0^u(s_0) \neq y_0^*$, then we have
\begin{eqnarray*}
        J_0^u(s_0)&=&\hat {J}^{u}_0(s_0,\mu_0^u(s_0))\\
        &=&\hat {J}^{u}_0(s_0,y_0^*)+\lambda (\mu_0^u(s_0)-y_0^*)^2\\
        &>&\hat {J}^{u}_0(s_0,y_0^*)\\
        &=&\hat {J}^{u^*}_0(s_0,y_0^*) ~ = ~ J_0^{u^*}(s_0),
\end{eqnarray*}
which means that Algorithm~\ref{alg:local} will not stop at $u^*$.
This is a contradiction and the assumption $\mu_0^u(s_0) \neq y_0^*$
does not hold. Thus, we must have $\mu_0^u(s_0)=y_0^*$, which
implies $J_0^u(s_0)=J_0^{u^*}(s_0)$. Therefore, $\mathcal{U}_{\rm
valid}^{\rm HD}(u^*)$ is a valid pruned deterministic policy space
by Definition \ref{def:valid_policy}.

We then prove part~(ii).  When Algorithm~\ref{alg:local} stops at
policy $u^*$,  it indicates that the corresponding policy $\tilde
u^*$ attains the inner optimum with initial state $(s_0,y_0^*)$,
that is
\begin{equation*}
V_0^{\tilde u^*}(s_0,y_0^*)= V_0^*(s_0,y_0^*).
\end{equation*}
Applying Lemma~\ref{lem:opt_con} to MDP $\widetilde{\mathcal M} $, we
have for any $t \in \mathcal T$ and $(s,a) \in \mathcal K$,
\begin{align*}
        \tilde r_t(s,y,\tilde u_t^*(s,y))
        ~ + \sum\limits_{(s',y') \in \mathcal S \times \mathcal Y }\tilde P_t(s',y'|s,y,\tilde u_t^*(s,y))g_{t+1}^{\tilde u^*}(s',y') \nonumber\\
        \geq
        ~ \tilde  r_t(s,y,a) ~+ \sum\limits_{(s',y') \in \mathcal S \times \mathcal Y}\tilde P_t(s',y'|s,y,a)g_{t+1}^{\tilde u^*}(s',y'),
\end{align*}
which implies that $\frac{\partial J_0^\delta(s_0)}{\partial
\delta}\Big|_{\delta=0} \le 0$, since $\tilde{\bm P}_\tau$ is
non-negative.

For any deterministic policy $u \in \mathcal{U}_{\rm valid}^{\rm
HD}(u^*)$, since $\hat {J}^{u^*}_0(s_0,y_0^*) \neq \hat
{J}^{u}_0(s_0,y_0^*)$, the inequality is strict for some $t \in
\mathcal T ~\text{and}~ (s,a) \in \mathcal K$. Thus we have
$\frac{\partial J_0^\delta(s_0)}{\partial \delta}\Big|_{\delta=0} <
0$ for any directions, which indicates that $u^*$ is a strictly
locally optimal policy in the valid pruned mixed policy space
generated by $\mathcal{U}_{\rm valid}^{\rm HD}(u^*)$.

We finally prove part~(iii). Since $y_0^*$ is not a break point.
According to Definition~\ref{def:cri}, there exists a constant
$\delta'>0$ such that $u^*$ remains optimal for any pseudo MV-MDPs
$\left\{\hat{\mathcal M}(y_0): y_0 \in
(y_0^*-\delta',y_0^*+\delta')\right\}$. Then, we have
\begin{equation}\nonumber
        \hat J^*_0(s_0,y_0^*)=\hat J^{u^*}_0(s_0,y_0^*) \ge \hat
        J^{u^*}_0(s_0,y_0) = \hat J^*_0(s_0,y_0), \quad\forall y_0 \in
        (y_0^*-\delta',y_0^*+\delta').
\end{equation}
Therefore, $y_0^*$ is a local optimum of $\hat J^*_0(s_0,y_0)$ in
the real space $\mathcal Y$.
\end{proof}

\subsection{Proof of Theorem~\ref{thm:global}}
\begin{proof}
    We prove this result by showing that the optimal value function
    $\hat J_0^*(s_0,y_0)$ of the pseudo MV-MDP (\ref{equ:inner}) is a
    concave function with respect to $y_0$, which is equivalent to prove
    $V_0^*(s_0,y_0)$ is a concave function on $\mathcal S \times
    \mathcal Y$ by Theorems \ref{thm:relation} \& \ref{thm:opt}. To this
    end, we prove $V^*_t(s_t,y_t)$ defined in (\ref{equ:opt_dp}) is
    concave on $\mathcal S \times \mathcal Y$ for all $t \in \mathcal T$
    by induction.

    For $t=T$, $V^*_T(s_T,y_T)=-\lambda y_T^2$ is obviously concave on
    $\mathcal S \times \mathcal Y$. Suppose that
    $V^*_{t+1}(s_{t+1},y_{t+1})$ is concave on $\mathcal S \times
    \mathcal Y$ for some $t \in \mathcal T$, we aim to prove
    $V^*_{t}(s_{t},y_{t})$ is also concave on $\mathcal S \times
    \mathcal Y$. We first verify that $\int_{\mathcal
        X}V^*_{t+1}(f_{t+1}(s_t,a_t,x),y_t-r_t(s_t,a_t,x))q_t(dx)$ is concave
    on $(s_t,a_t,y_t) \in \mathcal K \times \mathcal Y \subseteq
    \mathcal S \times \mathcal A \times \mathcal Y$, where $x$ is the
    possible value of random variable $\xi_t$. Arbitrarily choose
    $(s_t,a_t), (s_t',a_t') \in \mathcal K, \ y_t,y_t' \in \mathcal Y$,
    and $\omega \in [0,1]$. By using the concavity of $V^*_{t+1}$, we
    have
    \begin{align*}
        &  \omega   \int_\mathcal{X} V^*_{t+1}(f_{t}(s_t,a_t,x),y_t-r_t(s_t,a_t,x))q_t(dx) + (1-\omega) \int_\mathcal{X}V^*_{t+1}(f_{t}(s_t',a_t',x),y_t'-r_t(s_t',a_t',x))q_t(dx)\\
        &=  \int_\mathcal{X} \left\{\omega V^*_{t+1}(f_{t}(s_t,a_t,x),y_t-r_t(s_t,a_t,x)) + (1-\omega)V^*_{t+1}(f_{t}(s_t',a_t',x),y_t'-r_t(s_t',a_t',x))\right\}q_t(dx)\\
        & \le \int_\mathcal{X} \left\{ V^*_{t+1}(\omega f_{t}(s_t,a_t,x)+(1-\omega)f_{t}(s_t',a_t',x),\omega (y_t-r_t(s_t,a_t,x))+(1-\omega)(y_t'-r_t(s_t',a_t',x))) \right\}q_t(dx)\\
        &=\int_\mathcal{X} \left\{V^*_{t+1}( f_{t}(\omega s_t + (1-\omega)s_t',\omega a_t+(1-\omega) a_t',x),\right.\\
        &\phantom{=\;\;}\left.
        \omega y_t + (1-\omega) y_t' -r_t(\omega s_t + (1-\omega)s_t',\omega a_t+(1-\omega) a_t',x))
        \right\}q_t(dx),
    \end{align*}
    where the inequality is ensured by the concavity of $V^*_{t+1}$, the
    last equality follows from Condition~(ii), and the feasibility of
    combined state-action pairs $(\omega s + (1-\omega)s',\omega
    a+(1-\omega) a') \in \mathcal K$ is guaranteed by the convexity of
    $\mathcal K$ in Condition~(i). Since $r_t(s_t,a_t,x)$ is linear to
    $s_t \in \mathcal S$ and $a_t \in \mathcal A$, we can obtain the
    concavity of
    \begin{equation}
        \nonumber
        \int_{\mathcal X} r_t(s_t,a_t,x)q_t(dx) + \int_{\mathcal X}V^*_{t+1}(f_{t}(s_t,a_t,x),y_t-r_t(s_t,a_t,x))q_t(dx)
    \end{equation}
    on $\mathcal K \times \mathcal Y$.

    Suppose $\tilde u^*=(\tilde u_t^*;t \in \mathcal T)\in\tilde{\mathcal U}^{\rm MD}$ is the optimal
    policy of the standard MDP (\ref{equ:newmdp}). Then we have
    \begin{align*}
        & \omega V^*_t(s_t,y_t) + (1-\omega) V^*_t(s_t',y_t') \\
        &= \omega \left\{  \int_{\mathcal X} r_t(s_t,\tilde u^*_t(s_t,y_t),x)q_t(dx)+\int_{\mathcal X}V^*_{t+1}(f_{t}(s_t,\tilde u^*_t(s_t,y_t),x),y_t-r_t(s_t,\tilde u^*_t(s_t,y_t),x))q_t(dx)\right\} \\
        &+(1-\omega) \left\{  \int_{\mathcal X} r_t(s_t',\tilde u^*_t(s_t',y_t'),x)q_t(dx)+\int_{\mathcal X}V^*_{t+1}(f_{t}(s_t',\tilde u^*_t(s_t',y_t'),x),y_t'-r_t(s_t',\tilde u^*_t(s_t',y_t'),x))q_t(dx)\right\} \\
        &\le \int_{\mathcal X} r_t(\omega s_t + (1-\omega)s_t',\omega \tilde u^*_t(s_t,y_t)+(1-\omega) \tilde u^*_t(s_t',y_t'),x)q_t(dx)\\
        & +\int_{\mathcal X}\bigg\{ V^*_{t+1}( f_{t}(\omega s_t + (1-\omega)s_t',\omega \tilde u^*_t(s_t,y_t)+(1-\omega) \tilde u^*_t(s_t',y_t'),x),\bigg.\\
        &\phantom{=\;\;}\bigg.
        \omega y_t + (1-\omega) y_t' -r_t(\omega s_t + (1-\omega)s_t',\omega \tilde u^*_t(s_t,y_t)+(1-\omega) \tilde u^*_t(s_t',y_t'),x)) \bigg\}q_t(dx)\\
        &\le  \max\limits_{a \in \mathcal A(\omega s_t+(1-\omega)s_t')}\bigg\{ \int_{\mathcal X} r_t(\omega s_t + (1-\omega)s_t',a,x)q_t(dx) +\bigg.\\
        &\phantom{=\;\;}\bigg. \int_{\mathcal X} \left\{ V^*_{t+1}( f_{t}(\omega s_t + (1-\omega)s_t',a,x),\omega y_t + (1-\omega) y_t' -r_t(\omega s_t + (1-\omega)s_t',a,x)) \right\}q_t(dx)\bigg\}\\
        &=V_t^*\big(\omega s_t + (1-\omega)s_t', \omega y_t + (1-\omega)y_t'\big),
    \end{align*}
    where the last inequality is ensured by the convexity of state-action pairs $\mathcal K$ in Condition (i), and thus
    $V^*_t(s_t,y_t)$ is concave on $\mathcal S \times
    \mathcal Y$. Therefore, we can recursively derive that $\hat
    J_0^*(s_0, y_0)=V^*_0(s_0,y_0)$ is also a concave function on
    $\mathcal S \times \mathcal Y$ by induction.

    Since Algorithm~\ref{alg:local} converges to $u^*$ and
    $y^*_0$, we can see that $(u^*,y^*_0)$
    is a fixed point, i.e.,
    \begin{equation}
        \nonumber
        \hat J_0^{u^*}(s_0,y^*_0) = \hat J_0^*(s_0, y^*_0), \quad s_0 \in \mathcal S,
    \end{equation}
    \begin{equation}
        \nonumber
        \hat J_0^{u^*}(s_0,y^*_0) = J_0^{u^*}(s_0), \quad s_0 \in \mathcal S.
    \end{equation}
    Since $\hat J_0^*(s_0,y_0)$ is quadratically concave in $y_0$, we
    know that $\hat J_0^*(s_0,y_0)$ has a unique local optimum in $y_0
    \in \mathcal Y$. Therefore, with Theorem~\ref{thm:converge}, we
    directly derive that the converged point $y^*_0$ of
    Algorithm~\ref{alg:local} is both the local and the global maximum
    point of $\hat J_0^*(s_0,y_0)$, i.e.,
    \begin{equation}
        \nonumber
        \hat J_0^*(s_0,y^*_0) = \max\limits_{y_0 \in \mathcal Y}\hat J_0^*(s_0,y_0), \quad s_0 \in \mathcal S.
    \end{equation}
    The above three equations imply that
    \begin{equation}
        \nonumber
        J_0^{u^*}(s_0) = \max\limits_{y_0 \in \mathcal Y}\hat J_0^*(s_0,y_0)=J_0^{*}(s_0), \quad s_0 \in \mathcal S.
    \end{equation}
    Therefore, $u^*$ is the globally optimal policy of the
    finite-horizon MV-MDP (\ref{equ:mv_mdp}). In summary,
    Algorithm~\ref{alg:local} converges to the global optimum with the
    conditions in Theorem~\ref{thm:global}.
\end{proof}

\subsection{Proof of Theorem~\ref{thm:structure_mv}}
    \begin{proof}
    We prove this theorem by showing that $V_t^*(s_t,y_t)$ has the following form,
    \begin{equation}\label{equ:quadratic}
        V_t^*(s_t,y_t)=b_{2,t} y_t^2 + (b_{0,t}+b_{1,t} s_t) y_t + g_t(s_t), \quad\forall (s_t,y_t) \in \tilde{\mathcal{S}}, t \in \mathcal T,
    \end{equation}
    where $b_{0,t},b_{1,t},b_{2,t}$ are real numbers and $g_t(s_t)$ is a quadratic function of $s_t$. We prove (\ref{equ:quadratic}) by induction.

    For $t=T$, $V_T^*(s_T,y_T)=-\lambda y_T^2$ obviously has the form (\ref{equ:quadratic}). Suppose that $(\ref{equ:quadratic})$ holds for some $t+1$, we analyze the property of $V_{t}^*(s_{t}, y_{t})$ by applying dynamic programming, that is,
    \begin{align}
        \nonumber
        V_{t}^*(s_{t}, y_{t})
        &=\max\limits_{a \in \mathcal A(s_t)}\int_{\mathcal X}\left\{r_t(s_t,a,x)+V_{t+1}^*(f_t(s_t,a,x),y_t-r_t(s_t,a,x))\right\}q_t(dx).
    \end{align}
    Since $V_{t+1}^*$ has the form (\ref{equ:quadratic}), easy to show
    \begin{equation}\label{equ:action_value}
        \tilde V_{t}^*(s_t,y_t,a):=\int_{\mathcal X}\left\{r_t(s_t,a,x)+V_{t+1}^*(f_t(s_t,a,x),y_t-r_t(s_t,a,x))\right\}q_t(dx)
    \end{equation}
    is quadratically concave with respect to $a \in \mathcal A$ combined with the proof of Theorem~\ref{thm:global}. Taking derivative of function $\tilde V_{t}^*(s_t,y_t,a)$ with respect to $a$ and let $\frac{\partial \tilde V_{t}^*}{\partial a}=0$, we obtain $a^*=c_{2,t}s_t + c_{1,t}y_t+c_{0,t}$ where $c_{0,t},c_{1,t},c_{2,t}$ are real numbers. Substituting $a^*=c_{2,t}s_t + c_{1,t}y_t+c_{0,t}$ into (\ref{equ:action_value}), we obtain $V_{t}^*(s_{t}, y_{t})=\tilde V_{t}^*(s_t,y_t,a^*)$ takes the form (\ref{equ:quadratic}). Therefore, (\ref{equ:quadratic}) holds.

    Take $t=0$ in (\ref{equ:quadratic}), since $V_{0}^*(s_{0}, y_{0})$ is concave with respect to $y_0$, the minimization of $V_{0}^*(s_{0}, \cdot)$ in $\mathcal Y$ is attained at $y^*_0=-\frac{b_{0,0}+b_{1,0}s_0}{b_{2,0}}:=k_0+k_1 s_0$.
\end{proof}

\subsection{Proof of Theorem~\ref{thm:dp_pf}}
Theorem~\ref{thm:dp_pf} is similar to Theorem~\ref{thm:opt}, but has
some differences due to the special reward function. We first give
some preliminaries. We define an operator $\mathbb{\hat L}_t^{\varphi}:
\mathcal B(\tilde{\mathcal{S}}) \rightarrow \mathcal B(\tilde{\mathcal{S}})$ for a
stochastic kernel $\varphi$ on $\mathcal{A}$ given ${\mathcal{S}}$
and $t \in \mathcal T$ by
\begin{equation}\label{equ:opera_pf}
   \mathbb{\hat L}_t^\varphi v(s,y_0) := \sum\limits_{\bm a \in \mathcal{A}(s)}
    \varphi(\bm a|s)\mathbbm E[v(e_t^0 s+\bm Q_t' \bm a,y_0)],\quad
    v \in \mathcal B(\tilde{\mathcal{S}}).
\end{equation}
Given $y_0 \in \mathcal Y$, we further denote by
\begin{equation}
    \nonumber
    \hat{J}_{t}^{u}(s_t,y_0) := \mathbbm{E}_{s_0}^{u}
    \big[s_t+R_{t:T}-\lambda\big(s_t+R_{t:T}-y_0\big)^2|s_t\big],
    \quad s_t \in \mathcal S,~t \in \mathcal T
\end{equation}
and
\begin{equation}
    \nonumber
    \hat{J}_{t}^*(s_t,y_0) := \sup\limits_{{u} \in {\mathcal U}^{\rm MR}}\hat{J}_{t}^{u}(s_t, y_0),
    \quad s_t \in \mathcal S,~t \in \mathcal T
\end{equation}
the expected total rewards under Markov randomized policy $u \in
\mathcal U^{\rm MR}$ and the optimal value function from stage $t$ to
terminal stage $T$, respectively. Next, we introduce a lemma.

\begin{lem}\label{lem:dp_policy}
    Given $y_0 \in \mathcal Y$ and $u = (u_t;t \in \mathcal T) \in \mathcal
    U^{\rm MR}$,  suppose the sequence $\left\{V_t^{u};
    t \in \mathcal T\right\}$ is generated by
    \begin{equation}\label{equ:pf_dp_policy}
        V_t^{u}(s_t,y_0)=\mathbb{\hat L}_t^{u_t}V_{t+1}^{u}(s_t,y_0), \quad\forall t \in \mathcal T~\text{and}~V_T^{u}(s_T,y_0) = s_T-\lambda (s_T-y_0)^2,
    \end{equation}
    then we have
    \begin{equation}\label{equ:dp_policy}
        V_t^{u}(s_t,y_0)=\hat{J}_t^{u}(s_t,y_0), ~\forall s_t \in \mathcal{S}, t \in \mathcal T.
    \end{equation}
\end{lem}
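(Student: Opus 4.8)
The plan is to prove the identity $V_t^u = \hat J_t^u$ by backward induction on $t$, running from the terminal stage $t=T$ down to $t=0$. The crucial structural fact I would exploit is the special reward form $r_\tau = s_{\tau+1}-s_\tau$ of the portfolio model, which makes the accumulated rewards telescope: for every $t$ one has $s_t + R_{t:T} = s_T$, and more importantly $s_t + R_{t:T} = s_{t+1} + R_{t+1:T}$. This is precisely why the pseudo mean $y_0$ never has to be promoted to an auxiliary state here, in contrast to the general construction of Theorem~\ref{thm:opt}: knowing the current wealth $s_t$ already determines the terminal wealth up to the future rewards, so the objective $s_t + R_{t:T} - \lambda(s_t + R_{t:T} - y_0)^2$ depends on the past only through $s_t$.

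For the base case $t=T$, I would note that $R_{T:T}=0$, so $\hat J_T^u(s_T,y_0) = \mathbbm{E}[s_T - \lambda(s_T-y_0)^2 \mid s_T] = s_T - \lambda(s_T-y_0)^2$, which matches the prescribed initialization $V_T^u(s_T,y_0)$ exactly. For the inductive step, assume $V_{t+1}^u = \hat J_{t+1}^u$. Using the telescoping identity, I rewrite $\hat J_t^u(s_t,y_0) = \mathbbm{E}_{s_0}^u[s_T - \lambda(s_T-y_0)^2 \mid s_t]$ and condition on the action $\bm a_t$, drawn according to $u_t(\cdot\mid s_t)$, together with the realized return $\bm e_t$ (equivalently $\bm Q_t$), which yields the next wealth $s_{t+1}=e_t^0 s_t + \bm Q_t' \bm a_t$. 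By the Markov property of the state process under the Markov policy $u$, the inner conditional expectation given $s_{t+1}$ equals $\mathbbm{E}^u[s_{t+1} + R_{t+1:T} - \lambda(s_{t+1}+R_{t+1:T}-y_0)^2 \mid s_{t+1}] = \hat J_{t+1}^u(s_{t+1},y_0)$, again by telescoping.

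Substituting the induction hypothesis $\hat J_{t+1}^u = V_{t+1}^u$ and collecting the averaging over $\bm a_t$ and $\bm e_t$, I would recognize the result as exactly $\mathbb{\hat L}_t^{u_t} V_{t+1}^u(s_t,y_0)$, which by the definition of the recursion in (\ref{equ:pf_dp_policy}) is $V_t^u(s_t,y_0)$. This closes the induction and yields (\ref{equ:dp_policy}).

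The main obstacle---really the only point needing care---is justifying the reduction of the conditional expectation given $s_t$ (and the chosen action) to a conditional expectation given $s_{t+1}$, i.e., verifying that conditioning on the full information at stage $t$ collapses to conditioning on $s_{t+1}$. This is where both the Markov structure of $u$ and the telescoping identity $s_t + R_{t:T} = s_{t+1} + R_{t+1:T}$ must be invoked simultaneously, so that the tail objective is a function of $(s_{t+1},y_0)$ alone and not of $s_t$ or the history. Everything else is a routine rearrangement of the finite sum over actions and the return-distribution expectation.
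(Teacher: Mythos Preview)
Your proposal is correct and follows essentially the same route as the paper: backward induction, the telescoping identity $s_t + R_{t:T} = s_{t+1} + R_{t+1:T}$ (so that the tail objective depends only on $(s_{t+1},y_0)$), and the Markov property of $u$ to collapse the conditioning. The only cosmetic difference is that the paper anchors the induction at $t=T-1$ by computing that step explicitly, whereas you start from the trivial terminal identity at $t=T$; both are equivalent.
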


\begin{proof}
We prove this lemma by induction. Taking $t=T-1$ and using
(\ref{equ:opera_pf}), we have
    \begin{eqnarray*}
        &&V_{T-1}^u(s_{T-1},y_0) =\hat{\mathbb L}_{T-1}^{u_{T-1}}V_{T}^{u}(s_{T-1},y_0) \\
        &=&\sum\limits_{\bm a_{T-1} \in \mathcal{A}(s_{T-1})}u_{T-1}(\bm a_{T-1}|s_{T-1})\mathbbm{E}_{s_0}^u[V_T^u(e_{T-1}^0 s_{T-1}+\bm Q_{T-1}' \bm a_{T-1},y_0)]\\
        &=&\sum\limits_{\bm a_{T-1} \in \mathcal{A}(s_{T-1})}u_{T-1}(\bm a_{T-1}|s_{T-1})\mathbbm{E}_{s_0}^u[e_{T-1}^0 s_{T-1}+\bm Q_{T-1}' \bm a_{T-1}-\lambda (e_{T-1}^0 s_{T-1}+\bm Q_{T-1}' \bm a_{T-1}-y_0)^2]\\
        &=&\mathbb{E}_{s_0}^u [e_{T-1}^0 s_{T-1}+\bm Q_{T-1}' \bm a_{T-1}-\lambda (e_{T-1}^0 s_{T-1}+\bm Q_{T-1}' \bm a_{T-1}-y_0)^2|s_{T-1}]\\
        &=&\mathbbm{E}_{s_0}^u[s_T-\lambda (s_T-y_0)^2|s_{T-1}]\\
        &=&\mathbbm{E}_{s_0}^u[s_{T-1}+R_{T-1:T}-\lambda (s_{T-1}+R_{T-1:T}-y_0)^2|s_{T-1}]\\
        &=&\hat{J}_{T-1}^u(s_{T-1},y_0),
    \end{eqnarray*}
where the second and third equalities follow from
(\ref{equ:opera_pf}) and definition of $V_T^{u}(s_T,y_0)$, the fifth
and sixth equalities are guaranteed by the definitions of transition
function and reward function, respectively. Suppose that
(\ref{equ:dp_policy}) is true for $T-1,T-2,\ldots,t+1$, we show that
it is also true for $t$.
    \begin{align*}
        V_{t}^u(s_{t},y_0)
        =&\quad\mathbb{\hat L}_{t}^{u_{t}}V_{t+1}^{u}(s_{t},y_0) \\
        =&\sum\limits_{\bm a_{t} \in \mathcal{A}(s_{t})}u_{t}(\bm a_{t}|s_{t})\mathbbm{E}_{s_0}^u[V_{t+1}^u(e_{t}^0 s_{t}+\bm Q_t' \bm a_{t},y_0)]\\
        =&\sum\limits_{\bm a_{t} \in \mathcal{A}(s_{t})}u_{t}(\bm a_{t}|s_{t})\mathbbm{E}_{s_0}^u[\hat{J}_{t+1}^u(e_{t}^0 s_{t}+\bm Q_t' \bm a_{t},y_0)]\\
        =&\sum\limits_{\bm a_{t} \in \mathcal{A}(s_{t})}u_{t}(\bm a_{t}|s_{t})\mathbbm E_{s_0}^u[e_{t}^0 s_{t}+\bm Q_t' \bm a_{t}+R_{t+1:T}-\lambda (e_{t}^0 s_{t}+\bm Q_t' \bm a_{t}+R_{t+1:T}-y_0)^2|e_{t}^0 s_{t}+\bm Q_t' \bm a_{t}]\\
        =&\quad\mathbbm E_{s_0}^u[e_{t}^0 s_{t}+\bm Q_t' \bm a_{t}+R_{t+1:T}-\lambda (e_{t}^0 s_{t}+\bm Q_t' \bm a_{t}+R_{t+1:T}-y_0)^2|s_{t}]\\
        =&\quad\mathbbm E_{s_0}^u[s_{t}+R_{t:T}-\lambda (s_{t}+R_{t:T}-y_0)^2|s_{t}]\\
        =&\quad \hat{J}_{t}^u(s_{t},y_0),
    \end{align*}
where the second to last equality follows from the special form of
transition function and reward function. Therefore,
(\ref{equ:dp_policy}) holds by induction.
\end{proof}

Lemma~\ref{lem:dp_policy} shows that the value function
$\hat{J}_0^u(s_0,y_0)$ under a Markov randomized policy $u \in
\mathcal{U}^{\rm MR}$ can be computed by dynamic programming
(\ref{equ:pf_dp_policy}). Next we prove Theorem~\ref{thm:dp_pf},
which establishes the dynamic programming of the optimal value
function $\hat{J}_0^*(s_0,y_0)$.

\textbf{Proof of Theorem~\ref{thm:dp_pf}}. First, it is well known
from Theorem~5.5.1 of \cite{Puterman94} that there exists a
Markov randomized policy $u \in \mathcal U^{\rm MR}$ corresponds to
each history-dependent randomized policy $u' \in \mathcal U^{\rm HR}$
such that
\begin{equation*}
\mathbb{P}_{s_0}^{u}(s_t,\bm a_t)=\mathbb{P}_{s_0}^{u'}(s_t,\bm
a_t),\quad\forall  (s_t,\bm a_t) \in \mathcal{K}, t \in \mathcal T,
\end{equation*}
which implies
\begin{equation}\nonumber
\hat{J}_0^{*}(s_0,y_0)=\sup\limits_{u \in \mathcal U^{\rm HR}}
\hat{J}_0^{u}(s_0,y_0)=\sup\limits_{u \in \mathcal U^{\rm MR}}
\hat{J}_0^{u}(s_0,y_0), \quad s_0 \in \mathcal S.
\end{equation}
We next prove $V_0^*={\hat{J}}_0^{*}$ by establishing two inequalities
of opposing directions: $V_0^* \le \hat{J}_0^{*}$ and $V_0^* \ge
\hat{J}_0^{*}$.

For the former, by the definition of operator $\mathbb{\hat{L}}^*_t$ in
(\ref{equ:opt_opera_pf}), there exists a policy $u = (u_t; t \in
\mathcal T) \in \mathcal U^{\rm {MD}}$ such that
\begin{equation}\label{equ:exi}
V_t^*(s_t,y_0)=\mathbb{\hat{L}}^*_t V_{t+1}^*(s_t,y_0)=\mathbb{\hat{L}}_t^{u_t}
V_{t+1}^*(s_t,y_0), \quad\forall s_t \in \mathcal S, t \in \mathcal
T.
\end{equation}
Using the result of Lemma~\ref{lem:dp_policy} and noting that
$V_T^*(s_T,y_0)=s_T-\lambda (s_T-y_0)^2=V_T^{u}(s_T,y_0)$, we have
\begin{equation}
        \nonumber
        V_0^*(s_0,y_0)=V_0^{u}(s_0,y_0)= \hat{J}_0^{u}(s_0,y_0),\quad s_0 \in \mathcal{S},
\end{equation}
thus $V_0^*(s_0,y_0) \le \sup\limits_{u \in \mathcal U^{\rm MR}}
\hat{J}_0^{u}(s_0,y_0)= \hat{J}_0^{*}(s_0,y_0)$.

For the latter, we just need to show that $V_0^* \ge  \hat{J}_0^{u}$
for each $u=(u_t; t \in \mathcal T) \in \mathcal U^{\rm MR}$. This
statement  holds by using the same argument of the former case, just
with (\ref{equ:forall}) in lieu of (\ref{equ:exi}),
\begin{equation}\label{equ:forall}
V_t^*(s_t,y_0)=\mathbb{\hat{L}}^*_t V_{t+1}^*(s_t,y_0) \ge \mathbb{\hat{L}}_t^{u_t}
V_{t+1}^*(s_t,y_0), \quad\forall u \in \mathcal U^{\rm MR},  s_t \in
\mathcal S, t \in \mathcal T,
\end{equation}
and noting that $\mathbb{\hat{L}}_t^{u_t}$ is a monotonically increasing
operator. Therefore, we have $V_0^*=\hat{J}^*_0$.

Furthermore, if $\bm a_t^* \in \mathcal A(s_t)$ attains the maximum in the
operation $\mathbb{\hat{L}}^*_t  V^*_{t+1}(s_t,y_0)$, then we have
$\hat{J}^*_0(s_0,y_0)=V_0^*(s_0,y_0)=\hat{J}^{\hat{u}^*}_0(s_0,y_0)$
where $\hat{u}^*=(\hat{u}^*_t; t \in \mathcal T) \in {\mathcal
U}^{\rm {MD}}$ with $\hat{u}_{t}^*(s_t)=\bm a_t^*$, which
implies that $\hat{u}^*$ is an optimal policy for the inner pseudo
MV-MDP (\ref{equ:inner_pf}).

\end{appendices}

\end{document}